\renewcommand{\epsilon}{\varepsilon}
\renewcommand{\mid}{|}
\renewcommand{\leqslant}{\leq}
\newcommand{\rrvert}{\vert}
\newcommand{\llvert}{\vert}
\newcommand{\Clconstcccpenki}{c_1}
\newcommand{\Clconstcccnulis}{c_2}
\newcommand{\Clconstcccseptyni}{c_3}
\newcommand{\Clconstcccdesimt}{c_4}
\newcommand{\Clconstcccvienas}{c_5}
\newcommand{\Clconstcccdu}{c_6}
\newcommand{\Clconstccavienas}{c_7}
\newcommand{\Clconstccadu}{c_8}
\newcommand{\Cdev}{c_9}
\newcommand{\Clconstccseptyn}{c_{10}}
\newcommand{\Clconstddcvienas}{d_1}
\newcommand{\Clconstddbv}{d_2}
\newcommand{\Clconstddcptr}{d_3}
\newcommand{\Clconstddbke}{d_4}
\newcommand{\Clconstddcppe}{d_5}
\newcommand{\Clconstddcppps}{d_6}
\newcommand{\deptyni}{d_7}
\def\E{\mathbb{E}}
\def\N{\mathbb{N}}
\def\P{\mathbb{P}}
\def\Z{\mathbb{Z}}
\def\R{\mathbb{R}}
\def\S{\mathbb{S}}
\def\A{\mathcal{A}}
\def\B{\mathcal{B}}
\def\G{\mathcal{G}}
\def\W{\mathcal{W}}
\def\K{\mathfrak{K}}
\def\NN{\mathcal{N}}
\def\NNN{\mathfrak{N}}
\def\MM{\mathcal{M}}
\def\rtun{{}^{(1)}{r_t}}
\def\rtdeux{{}^{(2)}{r_t}}
\newcommand{\un}{\mathbf{1}}
\def\DD{\mathcal{D}}
\def\F{\mathcal{F}}
\newtheorem{prop}{Proposition}
\newtheorem{theorem}{Theorem}
\newtheorem{coroll}{Corollary}
\newtheorem{lemma}{Lemma}
\begin{document}
\begin{frontmatter}

%\dochead{}
\title{Fluctuations of the front in a one-dimensional model for the
spread of an infection\thanksref{T1}}
\runtitle{Fluctuations of the front in a 1d infection model}

\begin{aug}
% Corresponding author: Jean BERARD - jberard@unistra.fr% Updated by
%VTEXPTS2LaTeX.exe, 04.09.2015 10:29
\author[A]{\fnms{Jean}~\snm{B\'erard}\corref{}\thanksref{T2}\ead[label=e1]{jberard@unistra.fr}}
\and
\author[B]{\fnms{Alejandro}~\snm{Ram\'irez}\thanksref{T3}\ead[label=e2]{aramirez@mat.puc.cl}}
\runauthor{J. B\'erard and A. Ram\'irez}
\affiliation{Universit\'e de Strasbourg and Pontificia Universidad
Cat\'
olica de Chile}
%\dedicated{}
\address[A]{I.R.M.A. CNRS UMR 7501\\
Universit\'e de Strasbourg\\
7 rue Ren\'e Descartes\\
F-67084 Strasbourg Cedex\\
France\\
\printead{e1}}
\address[B]{Facultad de Matem\'aticas\\
Pontificia Universidad Cat\'olica de Chile\\
Vicu\~na Mackenna 4860\\
Macul, Santiago\\
Chile\\
\printead{e2}}
\end{aug}
\thankstext{T1}{Supported in part by ECOS-Conicyt Grant CO9EO5.}
\thankstext{T2}{Supported in part by ANR project MEMEMO2.}
\thankstext{T3}{Supported in part by Fondo Nacional de Desarrollo Cient\'\i fico y Tecnol\'ogico Grant 1100298 and by Iniciativa Cient\'ifica Milenio Grant NC130062.}

% HISTORY:
%
\received{\smonth{5} \syear{2014}}% Updated by VTEXPTS2LaTeX.exe,
%04.09.2015 10:29
%
\revised{\smonth{4} \syear{2015}}% Updated by VTEXPTS2LaTeX.exe,
%04.09.2015 10:29

% ABSTRACT
%
\begin{abstract}
We study the following microscopic model of infection or epidemic
reaction: red and blue particles perform
independent nearest-neighbor continuous-time symmetric random walks on the
integer lattice $\mathbb{Z}$ with jump rates $D_R$ for red particles
and $D_B$ for blue particles, the interaction rule
being that blue particles turn red upon contact with a red particle. The
initial condition consists of i.i.d. Poisson particle numbers at each site,
with particles at the left of the origin being red, while particles at the
right of the origin are blue. We are interested in the dynamics of the front,
defined as the rightmost position of a red particle. For the case
$D_R=D_B$, Kesten and Sidoravicius
established that the front moves ballistically, and more precisely that it
satisfies a law of large numbers. Their proof is based on a multi-scale
renormalization technique, combined with approximate sub-additivity
arguments. In this paper,
we build a renewal structure for the front propagation process, and as
a corollary we obtain a central limit
theorem for the front when $D_R=D_B$.
Moreover, this result can be extended to the case where $D_R>D_B$, up to
modifying the dynamics so that blue particles turn red upon contact
with a
site that has previously been occupied by a red particle. Our approach
extends the renewal structure approach developed by
Comets, Quastel and Ram\'\i rez
for the so-called frog model, which corresponds to the $D_B=0$ case.
\end{abstract}

% KEYWORDS
% Pirmas kwd is didziosios raides
%
\begin{keyword}[class=AMS]
\kwd[Primary ]{60K35}
\kwd[; secondary ]{60F17}
\end{keyword}
\begin{keyword}
\kwd{Regeneration times}
\kwd{interacting particle systems}
\kwd{front propagation}
\end{keyword}
\end{frontmatter}

\setcounter{footnote}{3}
%s1 #&#
\section{Introduction}

Consider the following microscopic model of infection
or epidemic reaction
on the integer lattice $\mathbb Z$.
There are two types of particles: red and blue, both moving as
independent,
continuous-time, symmetric, nearest-neighbor random walks, with total
jump rate $D_R$ for red particles
and $D_B$ for blue particles.
The interaction rule between particles is the following: when a red
particle jumps to a site where there are
blue particles, all of them immediately become red particles; when a
blue particle jumps to a site
where there are red particles, it immediately becomes a
red particle. The initial condition is the following: at time zero,
each site in $\Z$ bears a random number of particles whose distribution
is Poisson with parameter $\rho>0$, the numbers of particles at
distinct sites being independent. Moreover, particles at the left of
the origin (including the origin) are red, while particles at the right
of the origin are blue.
We are interested in the asymptotic behavior of the rightmost site
$r_t$ occupied by a red particle at time $t$, which we call the \textit{front}.

Such particle systems have received attention in the
physical literature, as microscopic stochastic models which, in the
limit of
a large average number of particles per lattice site, yield reaction--diffusion
equations describing the \mbox{propagation} of a front, the prototypical
example being
the Fisher--Kolmogorov--Petrovsky--Piscounov equation; see, for
example, \cite{MaiSokKuzBlu,MaiSokBlu,MaiSokBlu2,KumTrip}. We refer to
\cite{Pan}
for an extensive review of the subject from a theoretical physics perspective.

On the other hand, according to
\cite{KesSid}, this model was suggested within the mathematics community
by Frank Spitzer around 1980, but rigorous mathematical results
describing the
behavior of the front have been difficult to obtain.

Indeed, the only two special cases for which ballisticity of the front
and a law of large numbers have been mathematically established are the
following:
\begin{itemize}
\item$D_R>D_B=0$; this is the so-called \textit{frog model} \cite
{RamSid,AlvMacPop}. Beyond ballisticity and the law of large numbers, a
central limit theorem and a large deviations principle have also been
obtained \cite{ComQuaRam,BerRam}.

\item$D_R=D_B>0$; this model will be referred to as the \textit{single-rate KS infection model}, after Kesten and Sidoravicius \cite
{KesSid,KesSid2}, where ``single rate'' emphasizes
the fact that red and blue particles share the same jump rate.
\end{itemize}

Specifically, in \cite{KesSid}, it is shown that the front moves
ballistically, in the sense that there exist two constants $C_1, C_2$
such that a.s.
%e1 #&#
%
\begin{equation}
\label{e:ballistique}0<C_2 \leq\liminf_{t \to
+\infty
}
t^{-1} r_t \leq\limsup_{t \to+\infty}
t^{-1} r_t \leq C_1 < +\infty.
\end{equation}
This result is strengthened in \cite{KesSid2} where it is shown that
there exists $0<v_*<+\infty$ such that a.s.,
%e2 #&#
%
\begin{equation}
\label{e:lgn} \lim_{t \to+\infty} t^{-1} r_t=v_*.
\end{equation}
Analogous results hold on $\Z^d$ for arbitrary $d \geq1$, with (\ref
{e:lgn}) being the one-dimensional version of a general shape theorem
proved in \cite{KesSid2}. Here, we are interested in
the fluctuations of $r_t$, and the first main result of the present
paper is the following.
%th1 #&#
%
\begin{theorem}
\label{t:tcl}
For the single-rate KS infection model, there exists a (nonrandom)
number $0<\sigma_*^2 < +\infty$ such that, as $\epsilon$ goes to zero,
\[
B_t^\epsilon:=\epsilon^{1/2} \bigl(r_{\epsilon^{-1}t}-
\epsilon^{-1} v_* t \bigr), \qquad t\ge0, %
\]
converges in law on the Skorohod space to a Brownian motion
with variance $\sigma_*^2$.
\end{theorem}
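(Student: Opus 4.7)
My plan is to construct a sequence of regeneration times $(\tau_k)_{k \ge 1}$ at which the front's evolution decouples from its past, and to deduce the functional CLT from the resulting i.i.d.\ increments via a standard invariance principle combined with an Anscombe-type time inversion. Building on the construction of Comets, Quastel, and Ram\'\i rez for the frog model, I would introduce an auxiliary \emph{look-ahead} process $\bar r_t$ obtained by running the dynamics only with those particles that initially lie strictly to the right of a moving barrier. By a monotonicity coupling (adding red particles only speeds the front up), one arranges that $\bar r_t \ge r_t$ for all $t$. A candidate regeneration time is then an epoch $\tau$ at which (i) the front has just jumped to a new record location, (ii) $r_\tau = \bar r_\tau$, and (iii) a \emph{sealing} event occurs guaranteeing that no particle initially located to the left of $r_\tau$ ever influences the evolution of the front after $\tau$. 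The strong Markov property, together with the independence of the initial Poisson configuration on disjoint half-lines, then yields that the shifted front $(r_{\tau+s} - r_\tau)_{s \ge 0}$ is independent of $\F_\tau$ and distributed as a copy of the front started from a suitable ``half-fresh'' configuration. Iterating the construction produces i.i.d.\ increments $(\tau_{k+1} - \tau_k,\, r_{\tau_{k+1}} - r_{\tau_k})$ for $k \ge 1$.

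The central technical step, and the one I expect to be the main obstacle, is to prove that $\P(\tau_1 < +\infty) = 1$ and that $\E[\tau_1^p] + \E[|r_{\tau_1}|^p] < +\infty$ for some $p > 2$. To control the tails one would combine the ballistic lower bound \eqref{e:ballistique} of Kesten and Sidoravicius with a decoupling estimate showing that each attempt to realize a regeneration succeeds with probability bounded below, independently of past failures; the number of trials before success is then stochastically dominated by a geometric variable, and a renewal-type argument delivers a polynomial tail bound. The key obstruction relative to the frog-model case of Comets, Quastel, and Ram\'\i rez is that here blue particles diffuse, so that particles initially to the left of the front can a priori re-enter the relevant region and destroy the desired decoupling; the sealing event in (iii) must be designed to rule out such reentry with sufficiently high probability, presumably by imposing an atypically sparse configuration of the left-hand environment on a window whose size grows with the length of the attempt, together with random-walk large-deviation bounds that forbid far-behind red particles from catching up.

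Once the renewal structure with finite second moment is in place, the conclusion is standard. The law of large numbers for i.i.d.\ sums gives
$$
v_* = \frac{\E[r_{\tau_2} - r_{\tau_1}]}{\E[\tau_2 - \tau_1]},
$$
which reconciles with \eqref{e:lgn}. Donsker's invariance principle applied to the random walk $(r_{\tau_n} - v_* \tau_n)_{n \ge 1}$ yields a Brownian limit under diffusive rescaling, and an Anscombe-type time-change argument transfers this to the natural parametrization, producing the functional convergence of $B_t^\epsilon$ on the Skorohod space to a Brownian motion with variance
$$
\sigma_*^2 = \frac{\E\big[\big((r_{\tau_2} - r_{\tau_1}) - v_*(\tau_2 - \tau_1)\big)^2\big]}{\E[\tau_2 - \tau_1]}.
$$
Finiteness of $\sigma_*^2$ follows from the $p > 2$ moment bound, while its positivity follows from the non-degeneracy of the regeneration increments, which one can check by exhibiting two distinct Poisson/clock realizations leading to different values of $r_{\tau_2} - r_{\tau_1} - v_*(\tau_2 - \tau_1)$.
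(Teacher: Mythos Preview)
Your overall renewal strategy matches the paper's, and your sealing condition (red particles never overtaking a line of slope $\alpha$ after $\tau$) is essentially its forward sub-$\alpha$ requirement. The genuine gap is in how you obtain \emph{identically distributed} increments. You invoke ``independence of the initial Poisson configuration on disjoint half-lines'' to get a ``half-fresh'' post-$\tau$ environment, but this is exactly where the argument breaks when blue particles move: the particles lying above $r_\tau$ at time $\tau$ have been diffusing since time $0$, so their joint law at time $\tau$ is not Poisson and depends on $\tau$ and on the entire front history. No disjoint-half-line argument at time $0$ repairs this, since by time $\tau$ particles have crossed any fixed spatial barrier in both directions. Your ``key obstruction'' paragraph worries about left-side (red) particles re-entering, which is the forward issue and comparatively mild; the hard point---flagged explicitly in the paper's introduction---is the law of the \emph{right-side} (blue) environment seen from the front, and your proposal does not address it.

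The paper's fix is to extend every random-walk path to a two-sided trajectory indexed by $\R$, using reversibility of the Poisson field under the free dynamics (Lemma~\ref{l:reversibilite}, Proposition~\ref{p:shift-invariance}). A regeneration time $\kappa$ is then required to be, in addition, a \emph{backward super-$\alpha$ time}: every $(W,u) \in B_\kappa$ satisfies $W_s \ge r_\kappa - \alpha(\kappa - s)$ for all $s < \kappa$, including $s < 0$. When the front path up to $\kappa$ is sub-$\alpha$, this event contains the fixed event $G$ of Corollary~\ref{c:conditionnement-1}, so the conditional law of $\pi_{r_\kappa,\kappa}(B_\kappa)$ given the red history collapses to $\P_\nu(\,\cdot \mid G)$, a law independent of the past. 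That device---not spatial independence at time $0$---is what makes the increments i.i.d. (A minor separate point: removing particles can only slow the front by Lemma~\ref{l:monotonie}, so your inequality $\bar r_t \ge r_t$ appears to be reversed.)
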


Note that the method used to derive the above results also yields the
convergence to an invariant distribution of the environment of
particles as seen from the
front.

For the general case in which $D_R \neq D_B$, an upper bound on the
speed similar to the one
in (\ref{e:ballistique}) is proved in \cite{KesSid}, but no
corresponding lower bound is available, so that even ballisticity as
described in (\ref{e:ballistique}) is an open question. We now
introduce a slight variation upon this model for which, when $D_R >
D_B>0$, it is indeed possible to derive results similar to those that
hold for the single-rate model. This variation consists in making the
infectious power of red particles \textit{remanent}, in the sense that a
blue particle turns red not only when it is in contact with a red
particle, but as soon as it is located at a site that has previously
been occupied by a red particle. We call this model the \textit{remanent
KS infection model}. In this context, it is natural to define the
position of the front at time $t$ as the rightmost position ever
occupied by a red particle up to time $t$.
We can then prove the two following results.

%th2 #&#
%
\begin{theorem}\label{t:lgn-remanent}
For the remanent KS infection model with $0<D_B \leq D_R$, there exists
$0<v_{\star}<+\infty$ such that a.s.,
\[
\lim_{t \to+\infty} t^{-1} r_t=v_{\star}.
\]
\end{theorem}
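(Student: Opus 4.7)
The plan is to establish the law of large numbers through a regeneration construction, then invoke the classical LLN for i.i.d.\ sums; the overall architecture follows the one used in \cite{ComQuaRam} for the frog model, adapted to the remanent dynamics. I begin with a graphical construction, attaching to each initial particle an independent sequence of Poisson clocks (of rate $D_R$ or $D_B$ depending on color) together with an independent family of $\pm 1$ signs for jump directions. Two features particular to the remanent model simplify the setup: since red particles perform nearest-neighbor random walks and start in $(-\infty, 0]$, the set of sites ever visited by a red particle up to time $t$ is exactly the interval $(-\infty, r_t]$ (so $r_t$ is already non-decreasing by construction); and a particle is currently blue if and only if its entire trajectory up to now has stayed strictly to the right of the past front $(r_s)_{s\leq t}$.

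Next, I would define regeneration times $\tau_1 < \tau_2 < \ldots$ as follows. A candidate is any time $t$ at which the front strictly advances, i.e.\ $r_t = r_{t^-}+1$. Call such a $t$ a true regeneration time if, in addition, a \emph{leftward isolation} event holds — no red particle currently at distance $\geq L$ behind the front ever reaches within $L'$ of the front again — and a \emph{rightward isolation} event holds — no blue particle currently located at distance $> K$ ahead of the front enters the strip $[r_t, r_t+K]$ within a look-ahead window of length $T$, and the blue configuration on $[r_t+1,r_t+K]$ carries a bounded number of particles. For suitable constants $L,L',K,T$, the evolution of $r_t$ on $[\tau_k,\tau_{k+1}]$ becomes a deterministic functional of a slab of randomness — the Poisson clocks and initial particle positions in $(r_{\tau_k},\infty)$ — that is independent of the $\sigma$-field generated by $(\tau_1,\ldots,\tau_k,r_{\tau_1},\ldots,r_{\tau_k})$, producing an i.i.d.\ sequence of increments $(\tau_{k+1}-\tau_k,\, r_{\tau_{k+1}}-r_{\tau_k})_{k\geq 1}$.

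The core of the argument is to show that $\tau_k<\infty$ almost surely for every $k$ and that both $\E[\tau_2-\tau_1]$ and $\E[r_{\tau_2}-r_{\tau_1}]$ are finite. The ballistic upper bound of \cite{KesSid} extends to the remanent model — a coupling argument shows that remanence only accelerates the front — so front-advance attempts occur at positive rate per unit time. At each such attempt, the two isolation events have a probability bounded below by a positive constant, via large-deviation estimates for continuous-time symmetric random walks combined with the i.i.d.\ Poisson structure of the initial data ahead of the front. A Borel-Cantelli and geometric-trials argument then yields the almost-sure existence of $\tau_k$ and the required moment bounds. The strong LLN applied to the i.i.d.\ increments gives $r_{\tau_n}/\tau_n \to v_{\star}:=\E[r_{\tau_2}-r_{\tau_1}]/\E[\tau_2-\tau_1]\in(0,+\infty)$ almost surely, and the monotonicity of $r_t$ together with the ballistic upper bound interpolates to the continuous-time statement $r_t/t \to v_{\star}$.

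The main obstacle is the uniform lower bound on the joint probability of the isolation events. Controlling the left tail of the red cloud while simultaneously preventing a swarm of blue particles from invading the isolation strip over a fixed time window requires careful quantitative bounds on excursions of continuous-time symmetric random walks, and a delicate handling of the dependence between what happens to the left and to the right of the front. The hypothesis $D_B\leq D_R$ is essential here: it allows the blue spreading to be dominated by the red one, ensuring that the tail events for blue particles are no more delicate than their red counterparts and that the density of blue particles ahead of the front does not fluctuate in a way that would destroy the uniform lower bound.
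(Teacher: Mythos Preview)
Your overall framework (renewal structure plus the LLN for i.i.d.\ increments) matches the paper's, and you correctly identify the comparison with the single-rate model as the source of the ballistic lower bound for the remanent case. However, your proposed regeneration times have a genuine gap: they do not force the blue configuration ahead of the front at $\tau_k$ to have a \emph{fixed} distribution independent of the past, and without this the increments $(\tau_{k+1}-\tau_k,\,r_{\tau_{k+1}}-r_{\tau_k})$ are not i.i.d.

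Concretely, your ``rightward isolation'' only prevents distant blue particles from entering a strip of width $K$ during a window of length $T$. But the joint law of the blue particles at time $\tau_k$ already depends on the entire history $(r_s)_{s\le\tau_k}$: each blue particle is a random walk conditioned to have stayed above the moving front, and this conditioning distorts the Poisson structure in a way that your finite-window event does not undo. So ``the evolution of $r_t$ on $[\tau_k,\tau_{k+1}]$ becomes a deterministic functional of a slab of randomness independent of the past'' is exactly the claim that fails. The paper solves this by a different device: it extends every particle trajectory to negative times (which preserves the Poisson law by reversibility), and then requires at $\kappa_n$ a \emph{backward super-$\alpha$} condition, namely that every currently blue particle has stayed above the line $s\mapsto r_{\kappa_n}-\alpha(\kappa_n-s)$ for its \emph{entire} past, including $s<0$. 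Combined with the backward sub-$\alpha$ condition on the front, this makes the event $G(q,x)$ in Proposition~\ref{p:conditionnement-1} contain the backward super-$\alpha$ event, so the conditional law of $B_{\kappa_n}$ collapses to a fixed law (Corollary~\ref{c:conditionnement-1}). This ``infinite past'' idea is the missing ingredient in your construction; a finite look-ahead window on the blue side cannot substitute for it.

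A secondary issue: you write ``ballistic upper bound'' where you need the lower bound (the coupling ``remanence only accelerates'' transfers the Kesten--Sidoravicius \emph{lower} bound to the remanent model, which is what makes front advances occur at positive rate). The upper bound for the remanent model needs a separate argument (Proposition~\ref{p:ballistique-dessus-remanent-KS} in the paper).
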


%th3 #&#
%
\begin{theorem}\label{t:tcl-remanent}
For the remanent KS infection model with $0<D_B \leq D_R$,
there exists a (nonrandom) number $0<\sigma_{\star}^2 < +\infty$ such
that, as $\epsilon$ goes to zero,
\[
B_t^\epsilon:=\epsilon^{1/2} \bigl(r_{\epsilon^{-1}t}-
\epsilon^{-1} v_{\star} t \bigr), \qquad t\ge0, %
\]
converges in law on the Skorohod space to a Brownian motion
with variance $\sigma_{\star}^2$.
\end{theorem}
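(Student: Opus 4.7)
The plan is to construct a regeneration structure for the front $r_t$ and then apply an invariance principle for i.i.d.\ increments, following the blueprint used for Theorem \ref{t:tcl} in the single-rate case, itself an extension of the regeneration construction of Comets, Quastel and Ram\'\i rez for the frog model. The key structural advantage of the remanent rule is that the set of infected sites is monotone non-decreasing in time, so the front $r_t$ is itself non-decreasing, and the dynamics to the right of the front involves only particles that have never been in contact with the infected region. This monotonicity is precisely what allows the regeneration construction to go through in the regime $D_R>D_B$, where the non-remanent model resists analysis.

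First I would define a sequence of candidate regeneration times: at each strict new maximum of the front, say at time $t$, wait until the front has advanced by a prescribed distance $L$, and declare $t$ a success if during the intervening time interval, no previously infected particle reaches position $r_t+L$, and no blue particle initially located to the right of $r_t+L$ has entered the window $[r_t, r_t+L]$. Conditional on such a success, the configuration of uninfected particles strictly to the right of the new front position remains Poisson($\rho$)-distributed and independent of the past, since the corresponding particles have performed uninfluenced random walks driven by independent clocks. Using the strong Markov property to iterate this construction produces regeneration times $\tau_1<\tau_2<\cdots$ such that $(\tau_{k+1}-\tau_k,\, r_{\tau_{k+1}}-r_{\tau_k})_{k\ge 1}$ is i.i.d., and the law of large numbers supplied by Theorem \ref{t:lgn-remanent} forces $\E[r_{\tau_2}-r_{\tau_1}]=v_\star\,\E[\tau_2-\tau_1]$.

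The second step is to establish that $\tau_1$ and the regeneration increments have finite second moments, for which tail bounds of the form $\P(\tau_1>t)\le C\exp(-c\,t^\alpha)$ with some $\alpha>0$ are more than sufficient. These follow from two ingredients: the ballistic lower bound in Theorem \ref{t:lgn-remanent}, which guarantees that many candidate regeneration times are attempted per unit time; and a positive lower bound on the success probability of a single candidate, uniform in the past. The latter reduces to estimating the probability that a symmetric random walk of rate $D_R$ (for previously infected particles) or $D_B\le D_R$ (for initially blue particles near the front) fails to exit a certain ballistically moving box, a standard large deviation computation.

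The main obstacle lies in designing the regeneration event so that both (i) the post-regeneration configuration genuinely factorizes across the front and (ii) the success probability is bounded below uniformly in the past. Careful bookkeeping of which particles are ``forbidden'' from crossing the front at each attempt is required; the remanent setting makes this tractable because the set of infected particles is monotone in time, so the set of particles whose motion must be controlled is a growing but well-identified family. Once the i.i.d.\ structure with finite second moments is in hand, a standard Donsker argument applied to the centered partial sums $\sum_{k\le n}\bigl((r_{\tau_{k+1}}-r_{\tau_k})-v_\star(\tau_{k+1}-\tau_k)\bigr)$, followed by an inversion passing from discrete renewal indices to continuous time, yields the Brownian functional limit, with $\sigma_\star^2$ given explicitly as the ratio of the variance of $r_{\tau_2}-r_{\tau_1}-v_\star(\tau_2-\tau_1)$ to $\E[\tau_2-\tau_1]$; non-degeneracy of $\sigma_\star^2$ comes from the genuine randomness of the regeneration increments.
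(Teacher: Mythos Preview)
Your proposal follows the same high-level blueprint as the paper---build a renewal structure, prove finite second moments, apply Donsker---but it glosses over the central difficulty that distinguishes this model from the frog model, and as written the regeneration construction does not produce the claimed factorization.

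The gap is in your assertion that ``the configuration of uninfected particles strictly to the right of the new front position remains Poisson($\rho$)-distributed and independent of the past, since the corresponding particles have performed uninfluenced random walks driven by independent clocks.'' This is false without further structure. A blue particle at position $x>r_t$ at time $t$ has been conditioned on the event $\{W_s>r_s\text{ for all }s\in[0,t)\}$, where $(r_s)$ is itself random and depends on the entire configuration. This conditioning destroys the Poisson law and the independence from the past; your window device (forbidding crossings into $[r_t,r_t+L]$ during a bounded time interval) does not undo it, because the conditioning was already in force before time $t$. The paper's introduction flags exactly this point: in the frog model the blue configuration above the front is trivially fresh Poisson because blue particles do not move; here new ideas are needed. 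The paper's solution is to extend all trajectories to two-sided time, exploit the invariance of the Poisson law under the random walk dynamics (Proposition~\ref{p:shift-invariance}, Lemma~\ref{l:reversibilite}), and impose a \emph{backward super-$\alpha$} condition requiring that every blue particle has stayed above a space-time line for its \emph{entire past history} ($s<t$, including $s<0$). This is what makes the conditional law of $\pi_{r_{S_n},S_n}(B_{S_n})$ tractable (Proposition~\ref{p:conditionnement-1}, Corollary~\ref{c:conditionnement-1}, Proposition~\ref{p:conditionnement-S}). Your proposal contains no analogue of this mechanism.

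Two secondary points. First, you invoke Theorem~\ref{t:lgn-remanent} as input for the tail bounds, but in the paper that theorem is itself a \emph{consequence} of the renewal structure; the quantitative ballistic input actually used is the Kesten--Sidoravicius estimate (Proposition~\ref{p:ballistique-dessous-KS}), transported to the remanent model via the comparison Lemma~\ref{l:comparaison-remanent}. Second, the tail bounds obtained are only polynomial, $\P_\nu(\kappa_1\ge t)\le c\,t^{-c'\mathscr{C}+1/2}$ (see~\eqref{e:queue-kappa}), not stretched-exponential as you claim; finite second moments are secured by taking the auxiliary parameter $\mathscr{C}$ large.
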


Our strategy to prove Theorems~\ref{t:tcl},~\ref{t:lgn-remanent},
\ref{t:tcl-remanent} is based on the definition of a renewal structure,
extending the approach developed by Comets, Quastel and Ram\'\i rez in~\cite{ComQuaRam} to study the frog model ($D_R>D_B=0$).

Here, a renewal structure is a sequence of a.s. finite random times
$0=:\kappa_0<\kappa_1<\kappa_2<\cdots$ such that:
\begin{itemize}
\item$\mbox{the r.v.s } (\kappa_{n+1}-\kappa_n, r_{\kappa
_{n+1}}-r_{\kappa_n})_{n \geq0}$ are independent,
\item$ \mbox{the r.v.s } (\kappa_{n+1}-\kappa_n, r_{\kappa
_{n+1}}-r_{\kappa_n})_{n \geq1}$ are identically distributed,
\item$\E( \kappa_{2}-\kappa_1)^2 < +\infty$ and $\E( r_{\kappa
_{2}}-r_{\kappa_1} )^2 < +\infty$.
\end{itemize}
Given such a renewal structure, the law of large numbers and the
central limit theorem for $r_t$ can be derived in a standard way,
applying to $r_{\kappa_n}$ the corresponding results for sums of i.i.d.
square-integrable random variables, then approximating $r_t$ by
$r_{\kappa_{n_t}}$, where
$n_t:= \sup\{ n \geq1;   \kappa_n \leq t \}$.

The core of the work lies in finding an appropriate definition for the
renewal structure, and then proving the required tail-estimates. In the
present context, the idea is to find random times $\kappa_n$ that
satisfy the following two conditions: (i) the history of the front
after time $\kappa_n$ does not depend (up to translation) on the future
trajectories of particles located below $r_{\kappa_n}$ at time $\kappa
_n$, and (ii) the distribution of particles located above $r_{\kappa
_n}$ at time $\kappa_n$ is fixed (up to translation).

In \cite{ComQuaRam}, condition (i) is achieved by considering times
after which the front remains forever above a (space--time) straight
line, while particles lying below the front at these times remain
forever below the straight-line. For the frog model, condition (ii) is
then automatically satisfied, since the distribution of blue particles
above the front\footnote{Strictly speaking, this is true only when the
front hits a position above its past record.} is fixed, due to the fact
that blue particles do not move. This is no longer true in the more
complex case when both red and blue particles move, since the
distribution of particles located above $r_t$ at a time where the front
jumps then depends upon the whole past of the process. As a
consequence, new ideas are required to define a proper renewal
structure in this context. We achieve (ii) by extending the
trajectories of our random walks infinitely far into the past, looking
at times before which the front always lies below a straight line,
while particles lying above the front at these times have remained
above the straight line for their whole past history. A key role in the
corresponding argument is played by the invariance properties of the
Poisson distribution of particles, which allows the construction of the
time-reversal of the random walk trajectories and the analysis of the
distribution of the blue particles in terms of this time-reversal.

Once the renewal structure is defined, it is necessary to obtain tail
estimates for the random variables $\kappa_1, r_{\kappa_1}$, and
$\kappa
_{n+1}-\kappa_n$ and $r_{\kappa_{n+1}}-r_{\kappa_n}$ for $n \geq1$. To
this end, we adapt some of the techniques used in \cite{ComQuaRam},
especially the use of martingale methods to control the behavior of
systems of independent random walks. It turns out that some of the more
involved steps in the proof given in \cite{ComQuaRam}, that were needed
to control the accumulation of particles below the front, are replaced
in the present paper by a softer and (hopefully) more transparent argument.

Let us point out one important technical difference between the frog
model and the infection models considered here: ballistic lower bounds
for the position of the front are easy to obtain in the case of the
frog model, while they seem to be very difficult\footnote{By contrast,
ballistic upper bounds are relatively easy to obtain.} for infection
models where both red and blue particles move. In fact, the lower bound
part\footnote{More precisely, a quantitative version of it.} of (\ref
{e:ballistique}) is the main result of \cite{KesSid}, and is obtained
through a quite demanding multi-scale renormalization argument. We do
not provide an independent proof of ballisticity here, and instead have
to rely on the estimate proved in \cite{KesSid}. Still, at least in the
one-dimensional case, our renewal structure approach provides an
alternative way of deriving the law of large numbers (\ref{e:lgn})
(already proved in \cite{KesSid2}) from the coarser ballisticity
estimate obtained in \cite{KesSid}. The original proof in \cite
{KesSid2} is based on an approximate sub-additivity argument, and
relies too on the ballisticity estimates proved in \cite{KesSid}. Note
also that the only missing ingredient to make our proofs of Theorems
\ref{t:lgn-remanent} and~\ref{t:tcl-remanent} work when $D_R>D_B>0$ in
the nonremanent case, is a lower bound on the speed comparable to the
one established in \cite{KesSid} for the single-rate model.\footnote
{Specifically, we would need a result analogous to Proposition~\ref
{p:ballistique-dessous} in Section~\ref{s:estimates}.}

A natural question concerns our specific choice for the Poisson initial
distribution of particles. One can take advantage of the fact that the
random variables $(\kappa_{i+1}-\kappa_i,r_{\kappa_{i+1}}-r_{\kappa
_i})_{i \geq1}$ are independent from the initial configuration of
particles at the left of the origin to show that our results are still
valid if one starts with a Poisson distribution of particles
conditioned upon a nonzero probability event concerning only the
initial configuration of particles at the left of the origin. For
instance, we can prescribe the initial numbers of particles below zero
at any given finite number of sites. Still, it seems necessary to use
the Poisson distribution of particles as a reference probability
measure, so it is unclear how we could extend our results to, say, an
arbitrary initial condition with suitable decay of the number of
particles at infinity.

One should note that, strictly speaking, the initial distribution of
particles we have described is not exactly the same as the one
considered by Kesten and Sidoravicius. Indeed, in \cite{KesSid,KesSid2},
the initial condition is obtained by adding a deterministic finite and
nonzero number of red particles placed arbitrarily, to a configuration
formed by an i.i.d. Poisson number of particles at each site of $\Z$.
For the single-rate KS model on $\Z$, it is irrelevant for the value of
$r_t$ whether particles initially at the left of $r_0$ are red or blue,
so the only difference lies in the added red particles. Using the
previous remark on the possibility to condition the initial
configuration by the numbers of particles at a finite set of sites, we
see that our results in fact include the kind of initial configurations
considered in \cite{KesSid,KesSid2}.

One should also note that the results of \cite{KesSid,KesSid2} are
stated in terms of $\sup_{s \in[0,t]} r_s$ rather than $r_t$ (when
specialized to the one-dimensional case). It clearly makes no
difference for results on the scale of the law of large numbers, since
particles move sub-ballistically. Although such an argument cannot be
used for the central limit theorem, it turns out that, with our
definition of the renewal structure, $r_{\kappa_n} = \sup_{s \in
[0,\kappa_n]} r_s$, so that the CLT holds for either $r_t$ or $\sup_{s
\in[0,t]} r_s$.

Finally, note that our results do not say anything on the case
$D_R<D_B$. The only available results for a model of this kind are
those of \cite{KesSid3}, where a version of the infection model with
$0=D_R<D_B$ is considered, and it is shown that, for sufficiently small
$\rho$, the asymptotic velocity of the front is zero, while it is
conjectured that a positive asymptotic velocity is obtained for
sufficiently large $\rho$.

Let us also mention that other regeneration approaches have been
considered within the context of random walks in dynamic random
environment (see, e.g., \cite{AvedSaVol,dHodSaSid,HildHolSidSoaTei}).

The rest of the paper is organized as follows. In Section~\ref
{s:formal-construction}, we give a formal construction of the random
process associated with the single-rate KS infection model, together
with statements of its main structural properties. Section~\ref
{s:regeneration} provides the definition of the renewal structure, and
its key structural properties are stated and proved there, save for the
estimates on the tail, which form the content of Section~\ref
{s:estimates}. Finally, Section~\ref{s:extension} briefly explains how
to extend the previous results to the case of the remanent KS infection
model with $D_R>D_B$. For the sake of readability, some technical
points are not discussed in detail, and we refer to the arXiv version
\cite{BerRamArxiv} of the present work for a more thorough treatment of
these points.

%s2 #&#
\section{Formal construction of the single-rate process}\label
{s:formal-construction}

In this section, we describe the construction of the single-rate
process, in two steps. First, we construct, on appropriate spaces, the
dynamics of systems of independent random walks, without any reference
to a possible interaction between them. We then state important
structural properties of the dynamics, such as the strong Markov
property, or the invariance with respect to space--time shifts of the
Poisson distribution on the space of trajectories. Finally, we define
the infection process as a function of these random walks, together
with the corresponding notion of red and blue particles.

%s2.1 #&#
\subsection{Reference spaces}

It is convenient to assign a label to each particle in the system, so
that a particle can be uniquely identified by its label. More
precisely, we assume that each particle is labelled by an element of
the interval $[0,1]$, in such a way that no two particles share the
same label. As a consequence, a configuration of particles at a given
time can be represented by a family
\[
w= \bigl(w(x), x \in\Z \bigr),
\]
where, for all $x$, $w(x)$ is a (possibly empty) subset of $[0,1]$,
representing the labels of the particles located at site $x$.

Given $\theta> 0$, introduce the space \index{Stheta@{$\S_{\theta}$}}
$\S_{\theta}$ of all configurations of labelled particles $w=(w(x),  x \in\Z)$ satisfying $w(x) \cap w(y) = \varnothing$ whenever $x \neq y$,
and $\sum_{x \in\Z} \llvert  w(x)\rrvert   e^{-\theta\llvert  x\rrvert  } < +\infty$. Throughout this
paper, $\S_{\theta}$ is our reference space for particle
configurations, where $\theta$ is assumed to be a given positive real
number. The specific value of $\theta$ used in the proofs is made
precise later [see (\ref{e:constraints-param})], and the construction
we now develop is valid for any $\theta> 0$.

To define a distance on $\S_{\theta}$, we first define a distance on
the set of all finite subsets of elements of $[0,1]$. Consider two such
subsets $a= \{ a_1 > \cdots> a_p \}$, and $b= \{ b_1 > \cdots> b_q \}$.
If $p<q$, define $a_i:=0$ for $p+1 \leq i \leq q $;
if $p>q$, define $b_i:=0$ for $ q+1 \leq i \leq p$. Then define the
distance between $a$ and $b$ by
\[
d(a,b):= \llvert q - p \rrvert + \sum_{i=1}^{\max(p,q)}
\llvert b_i-a_i\rrvert.
\]
We now define a distance $d_{\theta}$ on $\S_{\theta}$ by
\[
d_{\theta}(w_1,w_2):= \sum
_{x \in\Z} d \bigl(w_1(x),w_2(x) \bigr)
e^{-\theta\llvert  x\rrvert  }.
\]

Let us turn to the description of particle trajectories. A priori, the
model consists only of particles moving after time zero. However, the
definition of the regeneration structure involves the extension of
their trajectories to negative time indices, so we start from the
beginning with a space allowing the description of trajectories with a
time-index in $\R$. A pair \index{Wu@{$(W,u)$}} $(W,u)$,
where $W=(W_t)_{t \in\R}$ is a c\`adl\`ag function from $\R$ to $\Z$
with nearest-neighbor jumps
(i.e., $\pm1$), and $u \in[0,1]$, is called a (labelled) particle
path, with $u$ being the label of the particle whose path is described
by $W$. In the sequel, we often call such a pair $(W,u)$ a particle,
instead of a particle path.

Given a finite or countable set $\psi$ of particle paths with pairwise
distinct labels, and a time coordinate $t \in\R$, we define the
configuration of labelled particles \index{Xt@{$X_t$}} $X_t(\psi) =
 (X_t(\psi)(x) )_{x \in\Z}$ by
\[
X_t(\psi) (x):= \bigl\{ u; W_t = x, (W,u) \in \psi
\bigr\}.
\]
In words, $X_t(\psi)(x)$ is the set of labels of particle paths that
are located at $x$ at time~$t$. Our reference space for the
trajectories of the particles in the system
is the set $\Omega$ formed by all the sets $\psi$ of particle
trajectories such that $t \mapsto X_t(\psi)$
is a c\`adl\`ag function from $\R$ to $(\S_{\theta}, d_{\theta})$, and
such that no two particle paths jump at the same time.
We endow $\Omega$ with the cylindrical $\sigma$-algebra $\F$ generated
by all the maps $\psi\mapsto X_t(\psi)$ from $\Omega$ to $\S
_{\theta}$
equipped with~the Borel sets associated with the metric $d_{\theta}$.
For all $t \in\R$, we define \index{Ft@{$\F_t$}} $\F_t:= \sigma(X_s,
s \in]-\infty,t])$. For all $x \in\Z$ and $t \in\R$, the space--time
shift \index{pi@{$\pi_{x,t}$}} $\pi_{x,t}$ on $\Omega$ is defined by
the fact that $\pi_{x,t}(\psi)$ is the set of particle paths obtained
from $\psi$ by replacing each path $((W_s)_{s \in\R},u)$ by
$((W_{s-t}-x)_{s \in\R},u)$.
We also consider the space $\DD_+$ as the space of c\`adl\`ag maps from
$[0,+\infty[$ to $\S_{\theta}$. Both spaces are equipped with their
respective cylindrical $\sigma$-algebras.
Finally, we denote by $\Psi$ the canonical map on $\Omega$, that is,
\index{Psi@{$\Psi$}} $\Psi(\psi):= \psi$, so that, whenever we consider
a probability measure $\P$ on $(\Omega, \F)$, the notation $\Psi$
stands for the random set of particle trajectories in the system.

%s2.2 #&#
\subsection{Reference probability $\mathbb{P}_w$}

To each $w \in\S_{\theta}$, we associate a probability measure $\P_w$
on $(\Omega,\F)$
describing the evolution of a system of independent particles starting
in configuration $w$ at time $0$.

Fix $w \in\S_{\theta}$, and, for all $x$, write $w(x)$ as an ordered tuple
\[
w(x) = \bigl\{ u(x,1) > \cdots> u \bigl(x, \bigl\llvert w(x) \bigr\rrvert \bigr)
\bigr\},
\]
and define
\[
A:= \bigl\{ (x,i); x \in\Z, 1 \leq i \leq \bigl\llvert w(x) \bigr\rrvert
\bigr\}.
\]
Consider an i.i.d. family of random walks $Z=(Z(x,i),  (x,i) \in A)$
where, for every $(x,i) \in A$, $Z(x,i) = (Z_t(x,i))_{t \in\R}$ is a
two-sided continuous-time random walk on $\Z$, starting at $x$ at time
zero, and evolving in both positive and negative time directions, with
symmetric nearest-neighbor steps, and constant jump rate equal to $2$.
We view $Z(x,i)$ as a random variable taking values in the space of c\`
adl\`ag paths from $\R$ to $\Z$ equipped with the cylindrical $\sigma$-algebra. It can be checked that, up to a modification on a set of
probability zero, the family of random paths $  \{ (Z(x,i),u(x,i));
  (x,i) \in A  \}$ is a random variable taking values in
$(\Omega, \F)$, so that we can define \index{Pw@{$\P_w$}}
\[
\P_w:= \mbox{distribution of } \bigl\{ \bigl(Z(x,i),u(x,i)
\bigr); (x,i) \in A \bigr\}\qquad\mbox{on } (\Omega,\F).
\]
The expectation with respect to $\P_w$ is denoted by $\E_w$.

We now quote two key properties of the family $(\P_w, w \in\S
_{\theta
})$, namely the strong Markov property and the invariance of the
Poisson initial distribution $\P_{\nu}$ with respect to space--time shifts.

%pr1 #&#
%
\begin{prop}\label{p:strong-Markov}
The strong Markov property holds for our process: for every $w \in\S
_{\theta}$, every nonnegative $(\F_t)_{t \geq0}$-stopping time $T$,
and bounded measurable function $F$ on $\DD_+$,
one has that, on $\{ T < +\infty\}$,
%e3 #&#
%
\begin{equation}
\label{e:strong-Markov}\E_w \bigl( F \bigl((X_{T+t})_{t \geq0}
\bigr) \mid \F_T \bigr) = \E_{X_T} \bigl(F
\bigl((X_t)_{t \geq0} \bigr) \bigr)\qquad \P_w\mbox{-a.s.}
\end{equation}
\end{prop}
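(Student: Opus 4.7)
The plan is to reduce Proposition \ref{p:strong-Markov} to the simple Markov property (Proposition \ref{p:simple-Markov}) via the classical dyadic discretization of the stopping time $T$, using Propositions \ref{p:unif-cont} and \ref{p:mesurabil-prob-traj} to justify the passage to the limit.

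First, I would reformulate the conditional expectation statement as an integral identity: it is equivalent to showing that for every $A \in \F_T$,
$$\E_w\left[\un_{A \cap \{T < +\infty\}}\, F\!\left((X_{T+t})_{t \geq 0}\right)\right] = \E_w\left[\un_{A \cap \{T < +\infty\}}\, \Phi(X_T)\right],$$
where $\Phi(w') := \E_{w'}(F((X_t)_{t \geq 0}))$ is a bounded measurable function of $w'$ thanks to Proposition \ref{p:mesurabil-prob-traj}, which also ensures that the right-hand side is $\F_T$-measurable. By a monotone class argument it suffices to prove this identity when $F((y_s)_{s \geq 0}) = f_1(y_{t_1}) \cdots f_m(y_{t_m})$ for some $m \geq 1$, $0 \leq t_1 \leq \cdots \leq t_m$ and bounded uniformly continuous functions $f_1, \ldots, f_m : (\S_{\theta}, d_{\theta}) \to \R$, since such products are measure-determining on the cylindrical $\sigma$-algebra of $\DD_+$ (using separability of $(\S_{\theta}, d_{\theta})$).

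Next I would discretize: set $T_n := 2^{-n} \lceil 2^n T \rceil$ on $\{T < +\infty\}$ and $T_n := +\infty$ otherwise. Each $T_n$ is an $(\F_t)_{t \geq 0}$-stopping time taking values in $2^{-n}\N \cup \{+\infty\}$, one has $T_n \downarrow T$, and $\F_T \subseteq \F_{T_n}$. For $A \in \F_T$, decomposing over the countably many possible values of $T_n$ and applying Proposition \ref{p:simple-Markov} on each event $A \cap \{T_n = k2^{-n}\} \in \F_{k2^{-n}}$ yields
$$\E_w\left[\un_{A \cap \{T_n < +\infty\}}\, F\!\left((X_{T_n+t})_{t \geq 0}\right)\right] = \E_w\left[\un_{A \cap \{T_n < +\infty\}}\, \Phi(X_{T_n})\right].$$

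To conclude I would let $n \to \infty$. On $\{T < +\infty\}$, right-continuity of $t \mapsto X_t$ in $(\S_{\theta}, d_{\theta})$ together with $T_n \downarrow T$ gives $X_{T_n + t_i} \to X_{T+t_i}$ for each $i$, and uniform continuity of the $f_i$ then yields $F((X_{T_n+s})_{s \geq 0}) \to F((X_{T+s})_{s \geq 0})$. Similarly $X_{T_n} \to X_T$, and Proposition \ref{p:unif-cont} ensures that $\Phi$ is itself bounded and uniformly continuous on $(\S_{\theta}, d_{\theta})$, so $\Phi(X_{T_n}) \to \Phi(X_T)$. Dominated convergence then delivers the desired identity on $\{T < +\infty\}$, proving \eqref{e:strong-Markov} on the product class and hence (by the monotone class reduction) in general. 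The only genuinely important ingredient here, rather than an obstacle, is the Feller-type continuity of $\Phi$ furnished by Proposition \ref{p:unif-cont}: without it, the limit $\Phi(X_{T_n}) \to \Phi(X_T)$ would not be justified, since Proposition \ref{p:mesurabil-prob-traj} alone provides only measurability.
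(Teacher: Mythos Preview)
Your proof is correct and follows essentially the same route as the paper's: dyadic discretization $T_n := 2^{-n}\lceil 2^n T\rceil$, application of the simple Markov property (Proposition~\ref{p:simple-Markov}) on each level set of $T_n$, and passage to the limit via right-continuity of $t\mapsto X_t$ together with the Feller-type continuity of $\Phi$ furnished by Proposition~\ref{p:unif-cont}. The only cosmetic difference is that you phrase the argument through the integral identity against test sets $A\in\F_T$, whereas the paper conditions the discrete-time identity on $\F_T$ directly before taking the limit; the content is the same.
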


We now turn to the invariance properties of the Poisson distribution of
particles with respect to the dynamics. Consider an i.i.d. family
$N=(N_x)_{x \in\Z}$ of Poisson processes on $[0,1]$, with intensity
$\rho$.
With probability one, $(N_x)_{x \in\Z} \in\S_{\theta}$, and we call
\index{nu@{$\nu$}} $\nu$ the probability distribution on $\S
_{\theta}$
induced by $N$. The probability measure $\P_{\nu}$ defined by $\P
_{\nu
}(\cdot):= \int_{\S_{\theta}} \P_w(\cdot) \,d \nu(w)$ is the reference
measure we use to\vspace*{1pt} describe the dynamics starting from a Poisson initial
distribution of particles.

%pr2 #&#
%
\begin{prop}\label{p:shift-invariance}
The probability distribution $\P_{\nu}$ on $\Omega$ is invariant with
respect to the space--time shifts $\pi_{x,t}$.
\end{prop}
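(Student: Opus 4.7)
The plan is to decompose $\pi_{x,t}=\pi_{x,0}\circ \pi_{0,t}$ and verify invariance under pure space shifts and pure time shifts separately.

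Invariance under the pure space shift $\pi_{x,0}$ is essentially immediate: $\nu$ is translation-invariant because the $N_x$'s are i.i.d., and conditionally on $w$, the family $(Z(x,i))$ under $\P_w$ consists of i.i.d. two-sided symmetric continuous-time random walks with a common step distribution started at $x$ at time $0$, so spatially shifting every starting point by the same integer simply relabels the family. This gives $\pi_{x,0}\Psi \stackrel{d}{=} \Psi$ under $\P_\nu$ directly from the construction of $\P_\nu$.

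For $\pi_{0,t}$ with $t\in\R$, I would exploit the Poisson structure of $\nu$. Under $\P_\nu$, the marked point set $\Pi := \{(x,u) : x \in \Z,\, u \in w(x)\}$ is a Poisson point process on $\Z\times [0,1]$ with intensity $\rho\,(\#\otimes \mathrm{Leb})$, and conditionally on $\Pi$ each atom $(x,u)$ is assigned an independent two-sided symmetric random walk $Z(x,u)$ started at $x$ at time $0$. The displacement map $(x,u)\mapsto (Z_{-t}(x,u),u)$ independently perturbs each atom of $\Pi$ with a common (symmetric) displacement law. By the displacement theorem for Poisson processes, the displaced point set still has intensity $\rho\,(\#\otimes \mathrm{Leb})$; in particular $X_{-t}\sim\nu$, which identifies the time-$0$ marginal of $\pi_{0,t}\Psi$ with that of $\Psi$. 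For the full path law, I would use stationarity of increments of the two-sided symmetric random walk: for such a walk $Z$ started at $0$ at time $0$, the process $(Z_{s-t}-Z_{-t})_{s\in\R}$ has the same distribution as $(Z_s)_{s\in\R}$ and is independent of $Z_{-t}$ (via the Markov property applied both forward and backward in time around $-t$). Applied particle-by-particle, together with the independence across atoms which is preserved by the shift, this shows that conditionally on $X_{-t}$, the shifted trajectories $(Z_{s-t}(x,i)-Z_{-t}(x,i))_{s\in\R}$ form an independent family of two-sided symmetric random walks started at $0$ at time $0$, attached to the Poisson marks $X_{-t}$. Combining with $X_{-t}\sim\nu$ and the disintegration $\P_\nu=\int \P_w\,d\nu(w)$ yields $\pi_{0,t}\Psi\stackrel{d}{=}\Psi$ under $\P_\nu$.

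The main technical obstacle I anticipate is making the marking/displacement argument fully rigorous on the infinite configuration space $\S_\theta$: one has to check that $\Pi$ is genuinely Poisson in the sense required and that the ``displacement'' operation is measurable, given the uncountable label space $[0,1]$ and the $\theta$-integrability constraint defining $\S_\theta$. A clean route is to work with the finite restrictions $\Pi_K := \Pi\cap ([-K,K]\times[0,1])$, which are standard finite-intensity Poisson processes, apply the finite-dimensional displacement theorem to each $\Pi_K$, and pass to the limit $K\to\infty$ using the uniform convergence of the truncated $S^K$ to $S$ guaranteed by Proposition~\ref{p:convergence-uniforme} and Lemma~\ref{l:ecart-approx-finie}. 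The Markov / stationarity argument for paths is then applied to each $Z(x,i)$ individually, for which no such issue arises.
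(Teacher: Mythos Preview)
Your decomposition into space and time shifts and your handling of the space shift agree with the paper's. For the time shift, however, there is a genuine error: the claim that for a two-sided walk $Z$ with $Z_0=0$ the centered process $(Z_{s-t}-Z_{-t})_{s\in\R}$ is independent of $Z_{-t}$ is false. Simply evaluate at $s=t$: one gets $Z_0-Z_{-t}=-Z_{-t}$, so the centered two-sided process \emph{determines} $Z_{-t}$. (Independent increments do give that the forward and backward halves $(Z_{s-t}-Z_{-t})_{s\ge 0}$ and $(Z_{s-t}-Z_{-t})_{s\le 0}$ are independent of each other; but $Z_{-t}$ is a function of the forward half.) Consequently your particle-by-particle justification of the step ``conditionally on $X_{-t}$, the centered shifted paths are i.i.d.\ two-sided walks'' does not go through as written.

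Your strategy is nonetheless sound and easily repaired. View $\Psi$ under $\P_\nu$ as a Poisson process on path space $\times[0,1]$ with intensity $\mu\otimes\rho\,du$, where $\mu=\sum_{x\in\Z}P^x$, and show directly that $(\pi_{0,t})_*\mu=\mu$. Writing a path under $P^x$ as $x+\xi$ with $\xi$ a two-sided walk from $0$, and setting $\eta_s:=\xi_{s-t}-\xi_{-t}$, stationarity of increments gives $\eta\stackrel{d}{=}\xi$ (the part of your claim that is correct), and although $\eta$ and $\xi_{-t}$ are dependent, the sum over $x\in\Z$ absorbs the random integer shift $\xi_{-t}$ pathwise:
\[
\sum_{x}E\bigl[F\bigl((x+\xi_{-t}+\eta_s)_s\bigr)\bigr]=E\Bigl[\sum_{y}F\bigl((y+\eta_s)_s\bigr)\Bigr]=\sum_{y}E\bigl[F\bigl((y+\xi_s)_s\bigr)\bigr].
\]
No independence of $\eta$ from $\xi_{-t}$ is needed; the Poisson mapping theorem then finishes.

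The paper takes a different route for the time shift: it first proves reversibility of the dynamics with respect to $\nu$ (Lemma~\ref{l:reversibilite}, via $p_t(x,y)=p_t(y,x)$ and the independent Poisson structure of the transition counts $N(x,y,a,b)$), and then deduces time-shift invariance of the two-sided extension from reversibility plus the Markov property. Once fixed, your intensity argument bypasses reversibility and uses only stationarity of increments; the paper's approach, by contrast, isolates the time-reversal symmetry explicitly, which it also exploits later when analysing the distribution of blue particles in the renewal construction.
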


%s2.3 #&#
\subsection{Infection dynamics}

We now formally define the infection dynamics, through random variables
defined on $(\Omega, \F)$. Given a system of independent random walks
specified by an element of $\Omega$, we define the corresponding front
dynamics, using the fact that particle initially at the left of the
origin are red, while particles initially at the right of the origin
are blue.

We start by defining the sequence $(T_k)_{k \geq0}$, which
characterizes the sequence of times at which the front moves (upward or
downward).
First, let $T_0:=0$, $\mathfrak{r}_{0}:=\sup\{ x \leq0;   \exists
(W,u) \in
\Psi,   W_0=x \}$ (with the convention $\inf\varnothing= -\infty$) and
define inductively the families of random variables
$(T_{\ell})_{\ell\geq0}$ and $(\mathfrak{r}_{\ell})_{\ell\geq0}$
as follows.
Consider $t>T_{\ell}$. We say that $t$ is upward if there exists
$(W,u) \in\Psi$ such that $ W_{t-} = \mathfrak{r}_{\ell}$ and $
W_{t} = \mathfrak{r}_{\ell
}+1$. We say that $t$ is downward if there exists
$(W,u) \in\Psi$ such that $ W_{t-} = \mathfrak{r}_{\ell}$, $W_{t} =
\mathfrak{r}_{\ell
}-1$, and $X_{t-}(\mathfrak{r}_{\ell}) = \{ u \}$.
Then let
\[
T_{\ell+1}:= \inf\{ t > T_{\ell}; \mbox{ $t$ is upward or
downward} \},
\]
with the convention that $\inf\varnothing= +\infty$.
By the fact that paths are c\`adl\`ag in $\S_{\theta}$, one must have
that $T_{\ell+1}>T_{\ell}$ when $T_{\ell}<+\infty$.
Provided that $T_{\ell+1}<+\infty$, one must also have that $T_{\ell
+1}$ is indeed a upward or downward time. In the upward case, we let
$ \mathfrak{r}_{\ell+1}:=\mathfrak{r}_{\ell}+1$. In the downward
case, we let $ \mathfrak{r}_{\ell
+1}:=\mathfrak{r}_{\ell}-1$.
Now \index{r_t@{$r_t$}} $r_t$ is defined on each interval $[T_{\ell},
T_{\ell+1}[$ by $r_t:=\mathfrak{r}_{\ell}$.
From the results in \cite{KesSid}, one has that, for all $k \geq1$,
$T_k < +\infty$, and $\sup_{\ell} T_{\ell} = +\infty$, almost surely
with respect to $\P_{\nu}$.
For the sake of definiteness, we set $\mathfrak{r}_{\ell}:=+\infty$
if $T_{\ell
}=+\infty$, and $r_t:= +\infty$ for $t \geq\sup_{\ell} T_{\ell}$.

In the sequel, we say that a time $t > 0$ is a jump time for the front
if it is one of the times $T_1, T_2,\ldots$ at which the position of
the front
either increases or decreases by one unit.

For all $0<t<+\infty$, we denote by $B_t$ the subfamily of particle
paths corresponding to particles that are blue at time $t$, that is,
\index{Bt@{$B_t$}}
\[
B_t:= \bigl\{ (W,u) \in\Psi; \forall s \in[0, t[,
W_s > r_s \bigr\}.
\]
Similarly, the subfamily of paths associated with particles that are
red at time $t$ is
\index{Rt@{$R_t$}}
\[
R_t:= \bigl\{ (W,u) \in\Psi; \exists s \in[0, t[,
W_s \leq r_s \bigr\}.
\]
We extend the definition by setting $B_0:= \{ (W,u) \in\Psi;  W_0
\geq0 \}$ and $R_0:= \{ (W,u) \in\Psi;  W_0 < 0 \}$. One checks
that, with these definitions, for all $0<t<+\infty$, $r_t$ corresponds
$\P_{\nu}$-a.s. to the position of the
rightmost red particle at time $t$.

In the sequel, we shall use the following $\sigma$-algebras. First,
given $t \geq0$, \index{FRt@{$\F^R_t$}} $\F^R_t$ is defined
by\footnote
{Formally, $\F^R_t$ is generated by all the random variables of the form
\[
\# \bigl( R_t \cap \bigl\{ (W,u); W_s = k, u
\in[a,b] \bigr\} \bigr),
\]
where $k \in\Z$, $0 \leq a < b \leq1$, and $s \leq t$.
}
\[
\F^R_t:= \sigma \bigl( (W_s,u); s \leq t,
(W,u) \in R_t \bigr).
\]
Informally, $\F^R_t$ contains the information relative to the
trajectories of particles that are red at time $t$, up to time $t$.
If $T$ is a nonnegative random variable on $(\Omega, \F)$, we also
define\footnote{Formally, $\F^R_T$ is generated by all the random
variables of the form
\[
\un(s \leq T) \times\# \bigl( R_T \cap \bigl\{ (W,u);
W_s = k, u \in [a,b] \bigr\} \bigr),
\]
where $k \in\Z$, $0 \leq a < b \leq1$, and $s \in\R$.}
\[
\F^R_T:= \sigma(T, r_T) \vee\sigma \bigl(
(W_s,u); s \leq T, (W,u) \in R_T \bigr).
\]
Similarly, we let \index{GRt@{$\G^R_t$}}
\[
\G^R_t:= \sigma \bigl( (W_s,u); s \in\R,
(W,u) \in R_t \bigr).
\]
Informally, $\G^R_t$ contains the information relative to the full
trajectories of the particles that are red at time $t$.
When $T$ is a nonnegative random variable, we also define
\[
\G^R_T:= \sigma(T, r_T) \vee\sigma \bigl(
(W_s,u); s \in\R, (W,u) \in R_T \bigr).
\]

%
%When working with these $\sigma-$algebras, we will have several
%occasions to apply the following lemma, which we quote now for future
%reference.
%\begin{lemma}\label{l:decoupe-tribu}
%Let $(O, \H)$ denote a measurable space, let $I$ denote a set of
%indices, and let $(\zeta^1_i)_{i \in I}$ and $(\zeta^2_i)_{i \in I}$
%be two families of random variables on $(O,\H)$, each $\zeta^1_i$ and $
%\zeta^2_i$ taking values in a measurable space $(S_i, \SS_i)$. Let $U$
%denote an event in $\H$ such that, on $U$, $\zeta^1_i = \zeta_2^i$ for
%all $i \in I$. Then, for any $A_1 \in\sigma(\zeta^1_i,  i \in I)$,
%there exists $A_2 \in\sigma(\zeta^2_i,  i \in I)$ such that $A_1
%\cap U = A_2 \cap U$.
%\end{lemma}
%

%s3 #&#
\section{Regeneration structure}\label{s:regeneration}

%s3.1 #&#
\subsection{Definition of \texorpdfstring{$(\kappa_n)_{n\geq0}$}{(kappan){ngeq0}}}
We now define the regeneration structure that is used to prove the
central limit theorem. Remember that it is based on straight lines
drawn on the space--time plane.
In the sequel, $\alpha$ is a strictly positive real number
corresponding to the slope of these straight lines.

%f1 #&#
%
\begin{figure}[b]

\includegraphics{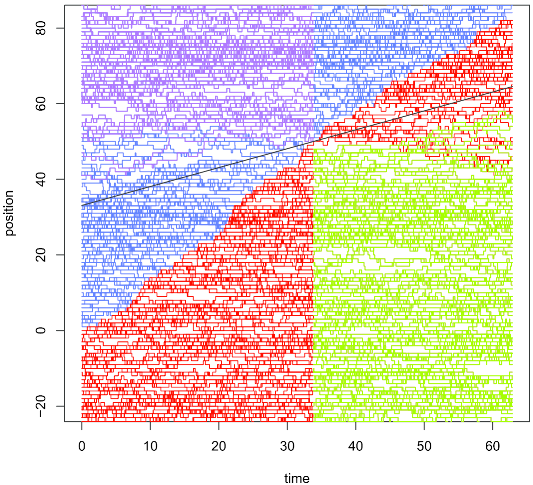}

\caption{A realization of the KS infection model with an $\alpha$
separation time $t$. Posterior (resp., prior) to the forward $\alpha$
time $t$, green (resp., purple) is used instead of red (resp., blue) to
draw the trajectories of particles that lie below (resp., above) $r_t$
at time $t$.\vspace*{-3pt}}\label{f:figure-KS-renewal-5}
\end{figure}

Consider an upward jump time $t > 0$.
We say that $t$ is
a \textit{backward sub-$\alpha$ time} if $r_t > \alpha t$ and if, for all
$0 \leq s < t$, one has $r_s < r_t - \alpha(t-s)$. We say that $t$ is
a \textit{backward super-$\alpha$ time} if, for any $(W,u)$ in $B_t$,
and for all $s < t$, one has $W_s \geq r_t - \alpha(t-s)$. If $t$ is
both a backward sub-$\alpha$ and super-$\alpha$ time,
we say that $t$ is a \textit{backward $\alpha$ time}.
We say that $t$ is a \textit{forward sub-$\alpha$ time} if, for all $(W,u)
\in R_t$ such that $W_t \leq r_t -1$, one has that $W_s \leq
r_t-1+\alpha(s-t)$ for all $s > t$, and if the particle $(W,u)$ making
the front jump at time $t$ remains at $r_t$ during the time-interval
$[t, t+\alpha^{-1}]$, and then satisfies the inequality $W_s \leq
r_t-1+\alpha(s-t)$ for all $s \geq t+ \alpha^{-1}$.
We say that $t$ is a {\it forward super-$\alpha$ time} if, for all $s >
t$, one has $r_s \geq r_t+ \lfloor\alpha(s-t) \rfloor $, and if, moreover,
there exists $(W,u) \in B_t$ such that $W_s = r_t$ for all $s \in[t,
t+\alpha^{-1}]$.
If $t$ is both a forward sub-$\alpha$ and super-$\alpha$ time,
we say that $t$ is a {\it forward $\alpha$ time}.
Finally, if $t$ is both a forward and backward $\alpha$ time, we say
that $t$ is an {\it$\alpha$-separation time}. We extend the definition
of a backward super-$\alpha$ time and of a forward super-$\alpha$ time
by allowing $t=0$ in the above definitions.
These definitions are illustrated in Figure~\ref{f:figure-KS-renewal-5}.\vspace*{-2pt}\vadjust{\goodbreak}

One then defines the renewal structure by letting \index
{kappan@{$\kappa
_n$}} $\kappa_0:=0$, and inductively:
\[
\kappa_{i+1}:= \inf\{T_j > \kappa_i;
\mbox{$T_j$ is an $\alpha $-separation time} \}.
\]

Before discussing why the above definition indeed yields a renewal
structure for the model, let us briefly explain why it is at least
conceivable that such a sequence of $\alpha$-separation times exists.
First, note that the existence of backward sub-$\alpha$ times is a
direct consequence of the front moving ballistically, provided that
$\alpha$ is chosen in such a way that $\alpha< \liminf t^{-1} r_t$.
Similarly, ballisticity of the front with speed strictly greater than
$\alpha$ also yields the existence of forward super-$\alpha$ times. On
the other hand, for a system of independent random walks whose
distribution at time $0$ is characterized by i.i.d. Poisson numbers of
particles at every site $x \leq0$, the maximum position occupied at
time $t \geq0$ by a random walk in the system, grows only sub-linearly
as a function of $t$. This provides at least a heuristic justification
of why forward sub-$\alpha$ times exist, and a symmetric argument can
be made for backward super-$\alpha$ times, by invoking time-reversal
and the reversibility of the Poisson distribution of particles with
respect to the dynamics of independent random walks. With a mild dose
of faith, the simultaneous occurrence of these four properties at a
single time $t$ should look plausible. Mathematical arguments giving
rigorous content to this heuristic line of reasoning are found in
Section~\ref{s:estimates}.

%s3.2 #&#
\subsection{Key properties of \texorpdfstring{$(\kappa_n)_{n\geq1}$}{(kappan){ngeq1}}}

The key properties of the sequence $(\kappa_{n})_{n \geq1}$ are stated
in the following theorem.

%th4 #&#
%
\begin{theorem}\label{t:structure-renouvellement}
With respect to $\P_{\nu}$, the r.v.s $(\kappa_n)_{n \geq0}$ are a.s.
finite and:
\begin{itemize}
\item$\mbox{the r.v.s } (\kappa_{n+1}-\kappa_n, r_{\kappa
_{n+1}}-r_{\kappa_n})_{n \geq0}$ are independent,
\item$ \mbox{the r.v.s } (\kappa_{n+1}-\kappa_n, r_{\kappa
_{n+1}}-r_{\kappa_n})_{n \geq1}$ are identically distributed,
\item$\E( \kappa_{2}-\kappa_1)^2 < +\infty$ and $\E( r_{\kappa
_{2}}-r_{\kappa_1} )^2 < +\infty$.
\end{itemize}
\end{theorem}
Given Theorem~\ref{t:structure-renouvellement}, it is more or less
standard to derive Theorem~\ref{t:tcl}, approximating $r_t$ by
$r_{\kappa_{n_t}}$, where
$n_t:= \sup\{ n \geq0;   \kappa_n \leq t \}$. Note that, due to the
definition of $\kappa$,
one has $r_{\kappa_{n_t}} \leq r_t \leq r_{\kappa_{n_t+1}}$, which
eases the corresponding approximation argument. We do not give the
details here (see, e.g., \cite{ComQuaRam}).

The proof of Theorem~\ref{t:structure-renouvellement} relies on two
distinct results, stated below as Propositions~\ref{p:renouvel-1} and
\ref{p:moments-renouvel}. Proposition~\ref{p:renouvel-1} deals with
structural properties of $(\kappa_n)_{n \geq0}$, while Proposition
\ref
{p:moments-renouvel} provides tail estimates.

%pr3 #&#
%
\begin{prop}\label{p:renouvel-1}
For all $n \geq1$, one has the following properties:
\begin{longlist}[(a)]
\item[(a)] the r.v.s $\kappa_1,\ldots, \kappa_n$ and $r_{\kappa
_1},\ldots, r_{\kappa_n}$ are measurable with respect to $\G
^R_{\kappa_n}$.
\item[(b)] on $\{ \kappa_n < +\infty\}$, the conditional distribution
of $(\kappa_{n+1} - \kappa_n, r_{\kappa_{n+1}}- r_{\kappa_{n}})$ with
respect to $\G^R_{\kappa_n}$ is the distribution\footnote{This is a
slight abuse of terminology, since, strictly speaking, $B_0$ is only
$\P
_{\nu}$-a.s. equal to a random variable from $(\Omega, \F)$ to itself.}
of $(\kappa_1, r_{\kappa_1})$ $(B_0)$
with respect to $\P_{\nu}$, conditioned on $t=0$ being a backward and
forward super-$\alpha$ time for $B_0$.
\end{longlist}
\end{prop}
Note that, in the formulation of Proposition~\ref{p:renouvel-1} above,
the fact that $t=0$ is a backward and forward super-$\alpha$ time for
$B_0$ means that the assumptions characterizing a backward and forward
super-$\alpha$ time are satisfied
when the set of particle trajectories taken into account is restricted
to $B_0$, that is, with $\Psi$ replaced by $B_0$.
In particular, the relevant front dynamics here is $(r_t(B_0))_{t \geq
0}$, not $(r_t)_{t \geq0}$. Also, implicit in this formulation is the
fact that the conditioning event has a nonzero probability, which is
proved in Corollary~\ref{c:prob-super-alpha}.

%pr4 #&#
%
\begin{prop}\label{p:moments-renouvel}
For small enough $\alpha$ (depending on $\rho$), there exists $\theta
>0$ such that $\E_{\nu}(\kappa_1^2)<+\infty$ and $\E_{\nu
}(r_{\kappa
_1}^2)<+\infty$.
\end{prop}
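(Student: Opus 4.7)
Plan. The goal is to obtain polynomial tail bounds $\P_{\nu}(\kappa_1 > t) \le C t^{-p}$ and $\P_{\nu}(r_{\kappa_1} > n) \le C n^{-p}$ with $p > 2$, from which the two second-moment bounds follow by integration. Since the front satisfies a ballistic upper bound $\limsup_t t^{-1} r_t \le C_1$ almost surely from \cite{KesSid}, the tail of $r_{\kappa_1}$ is controlled in terms of the tail of $\kappa_1$ up to a standard large deviation estimate for the number of particles that can participate in front jumps, so the main task is to control the tail of $\kappa_1$.

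The crucial ingredient is a uniform conditional lower bound on the probability that a given upward jump time $T_j$ of the front is an $\alpha$-separation time, provided $\alpha$ is chosen strictly below the ballistic lower bound $C_2$ of \cite{KesSid}. I would condition on $\F^R_{T_j}$ and handle the forward conditions first. The forward super-$\alpha$ event (that $r_s \ge r_{T_j} + \floor{\alpha(s - T_j)}$ for all $s > T_j$ and that some blue particle rests at $r_{T_j}$ during $[T_j, T_j + \alpha^{-1}]$) has a uniform positive probability by the strong Markov property (Proposition \ref{p:strong-Markov}), the ballistic lower bound applied to the shifted dynamics, and a standard Poisson waiting-time estimate for the resting particle. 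The forward sub-$\alpha$ event, which controls how fast red particles can overtake a line, is handled by exponential martingale deviation bounds for independent continuous-time random walks, tuned to the choice of $\alpha$.

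The backward conditions are the main obstacle. The backward sub-$\alpha$ condition concerns only the past trajectory of the front itself; by ballisticity, when $T_j$ is selected so that $r_{T_j}$ is near its typical value, the event that the line $s \mapsto r_{T_j} - \alpha(T_j - s)$ has not been crossed by the front in $[0, T_j)$ holds with positive probability. The backward super-$\alpha$ condition is the subtle one: it requires that the past trajectories of \emph{every} particle blue at time $T_j$, including the portions extended to negative times, remain above a linear barrier. The strategy is to invoke the reversibility (Lemma \ref{l:reversibilite}) and space-time shift invariance (Proposition \ref{p:shift-invariance}) of $\P_{\nu}$: under $\P_\nu$ the time-reversed trajectories of the blue particles at $T_j$ are distributed as forward trajectories of a Poisson system of independent random walks starting from $r_{T_j}$, and the backward super-$\alpha$ event becomes a forward event on this Poisson system staying above a linear barrier, controllable by a standard exponential moment computation. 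Here $\theta$ must be chosen small enough with respect to $\alpha$ and $\rho$ for the $\theta$-weighted sums defining $\S_\theta$ to remain controlled; this is what fixes the constraint \eqref{e:constraints-param}.

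Combining the above yields a uniform lower bound $\delta > 0$ on the conditional probability of a successful $\alpha$-separation at each of the $\asymp C_2 t$ upward jump times in $[0,t]$. These attempts are not independent, but using the strong Markov property together with Proposition \ref{p:controle-ecart} to couple the system after $T_j$ with a fresh Poisson initial condition, with total-variation error decaying fast in the elapsed time, one can combine the attempts to obtain essentially geometric (certainly super-polynomial) decay of $\P_{\nu}(\kappa_1 > t)$, far better than the polynomial rate needed. The analogous tail bound for $r_{\kappa_1}$ then follows via the ballistic upper bound. The principal technical difficulty lies in the backward super-$\alpha$ step, which is precisely why the two-sided trajectory construction and the Poisson reversibility developed in Section \ref{s:formal-construction} are indispensable.
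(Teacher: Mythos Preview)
Your proposal has the right overall shape (polynomial tails, control $r_{\kappa_1}$ via $\kappa_1$ plus the ballistic upper bound) and you correctly identify reversibility as the key to the backward super-$\alpha$ condition. However, there is a genuine gap in the treatment of the forward sub-$\alpha$ condition, and the paper's proof is organized around precisely this difficulty.

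The forward sub-$\alpha$ event requires that \emph{every} particle in $R_{T_j}$ stays below a line of slope $\alpha$ for all future times. The exponential martingale bound you invoke controls a single walk; to get a lower bound on the probability that all of them succeed you need a bound of the form $\prod_{(W,u)\in R_{T_j}}(1-e^{\theta(W_{T_j}-r_{T_j})})$, which is positive only if the exponentially weighted mass $\MM_j:=\sum_{(W,u)\in R_{T_j}} e^{-\theta(r_{T_j}-W_{T_j})}$ is bounded (this is Lemma \ref{l:controle-proba-atteinte} in the paper). Nothing in your argument explains why red particles do not accumulate near the front as time goes on, so you have no a priori control on $\MM_j$, and your ``uniform $\delta>0$'' does not exist. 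The coupling suggestion via Proposition \ref{p:controle-ecart} does not help here: that proposition compares two initial configurations that are already close in $d_\theta$, whereas you need to compare the actual red configuration at $T_j$ with a Poisson one, and there is no reason these are close.

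The paper's solution is to abandon the idea of attempting at every upward jump time and instead construct a specific sequence of stopping times $S_n$ (with auxiliary parameters $\mathscr{C}$ and $L$) chosen so that (i) each $S_n$ is automatically a backward sub-$\alpha$ time, (ii) at least $\mathscr{C}$ blue particles sit at the front, and (iii) at least $L$ additional $\alpha$-crossings have occurred since the previous attempt. Condition (iii) is the crucial one: via a martingale argument (Lemma \ref{l:martingale-exp}) each crossing multiplies the contribution of old red particles to $\MM$ by $e^{-\theta}$, so that $\E_\nu(\MM_{n+1}\mid D_n<\infty,\F^R_{S_n})\le c\,e^{-\theta L}\MM_n + c'$ (Lemma \ref{l:borne-esperance}). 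For $L$ large this contraction yields $\sup_n\E_\nu(\MM_n\mid D_{n-1}<\infty)<\infty$ (Proposition \ref{p:bornitude}), and only then does one obtain a uniform positive success probability at each $S_n$, hence a geometric tail on the number of attempts. Your sketch is missing this entire mechanism.
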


Deducing Theorem~\ref{t:structure-renouvellement} from Propositions
\ref
{p:renouvel-1} and~\ref{p:moments-renouvel} is straightforward. The
rest of this section is devoted to the proof of Proposition~\ref
{p:renouvel-1}, while Proposition~\ref{p:moments-renouvel}, is proved
in Section~\ref{s:estimates}.

%s3.3 #&#
\subsection{Structural properties: Proof of Proposition \texorpdfstring{\protect\ref{p:renouvel-1}}{3}}

For the sake of readability, the proof of Proposition~\ref
{p:renouvel-1} is divided into a sequence of four steps.

Step 1 establishes the measurability condition (a) in Proposition~\ref
{p:renouvel-1}. This is a classical step when dealing with renewal
structures, although a nontrivial one since the $\kappa_n$s look
infinitely far into the future of the trajectories. It merely reflects
the consistency across the $\kappa_n$s of the various comparison
conditions involving parallel space--time lines of slope $\alpha$. Step
2 is similar, establishing that, broadly speaking, going from $\kappa
_n$ to $\kappa_{n+1}$ is equivalent to going from $0$ to $\kappa
_1$, keeping only the trajectories of particles that are blue at time
$\kappa_n$ and applying a space--time translation sending $(r_{\kappa
_n},\kappa_n)$ to $(0,0)$. Step 3 explicitly characterizes the
distribution of particle trajectories that are blue at time $T_k$,
conditional on the trajectories of particles that are red at time
$T_k$. This is a key result, relying on the invariance properties of
the distribution $\P_{\nu}$. Step 4 builds on this result to
characterize the distribution of particle trajectories that are blue at
time $\kappa_n$, conditional on the trajectories of particles that are
red at time $\kappa_n$.

%s3.3.1 #&#
\subsubsection{Step 1: Measurability with respect to $\G^R_{\kappa
_n}$} We now prove statement (a) in Proposition~\ref{p:renouvel-1},
that is, the fact that, for all $n \geq1$, the r.v.s $\kappa_1,\ldots,
\kappa_n$ and $r_{\kappa_1},\ldots, r_{\kappa_n}$ are measurable with
respect to $\G^R_{\kappa_n}$.

First, note that the measurability of $\kappa_n$ and $r_{\kappa_n}$
with respect to $\G^R_{\kappa_n}$
is a direct consequence of the definition of $\G^R_{\kappa_n}$. Also,
with our conventions, the result is obvious on $\{ \kappa_n = + \infty
\}$, so we may assume that
$\{ \kappa_n < + \infty\}$.

From the definition of the infection dynamics,
particle paths $(W,u)$ outside $R_{\kappa_n}$ have no influence on the
front jumps between time $0$ and $\kappa_n$,
so that the history of the front up to time $\kappa_n$ is exactly the
same as the one that would be obtained
if there were no other particle paths in the system besides those in
$R_{\kappa_n}$. As a consequence,
the jump times $T_1 < \cdots< T_{\ell} = \kappa_n$
that lie between time $0$ and $\kappa_n$, are measurable with respect
to $\G^R_{\kappa_n}$. What remains to be proved is that, for every jump
time $T_i$ such that $1 \leq i \leq\ell-1$,
it is possible to tell whether $T_i$ is a backward/forward
sub/super-$\alpha$ time,
using only the information contained in $\G^R_{\kappa_n}$, which is not
{a priori} obvious since the definition of each $\kappa_1,\ldots,
\kappa_{n-1}$, imposes some conditions on every particle trajectory in
the system, so we have to check that, as far as particles in $B_{\kappa
_n}$ are concerned, these conditions are subsumed by those already
imposed by the definition of $\kappa_n$:
\begin{itemize}
\item Whether $T_i$ is a backward sub-$\alpha$ time involves only the
history of the front up to time $\kappa_n$, so this condition is $\G
^R_{\kappa_n}$-measurable.
\item Whether $T_i$ is a backward super-$\alpha$ time involves
conditions on trajectories in $B_{T_i} \cap R_{\kappa_n}$, which are
$\G
^R_{\kappa_n}$-measurable, and conditions on trajectories in $B_{T_i}
\cap B_{\kappa_n}$
which are automatically satisfied thanks to the fact that $\kappa_n$
itself is a backward super-$\alpha$ time and the fact that $r_{T_i}
\leq r_{\kappa_n} - \alpha(\kappa_n-T_i)$ since $\kappa_n$ is also a
backward sub-$\alpha$ time.
\item Whether $T_i$ is a forward sub-$\alpha$ time involves the
trajectories of paths in $R_{T_i} \subset R_{\kappa_n}$ only, so this
condition is $\G^R_{\kappa_n}$-measurable.
\item Whether $T_i$ is a forward super-$\alpha$ time involves a
condition of the front up to time $\kappa_n$, so this condition is $\G
^R_{\kappa_n}$-measurable, which is $\G^R_{\kappa_n}$-measurable, plus
a condition on the front posterior to $\kappa_n$ which is automatically
satisfied thanks to the fact that $\kappa_n$ itself is a forward
super-$\alpha$ time.
\end{itemize}

%s3.3.2 #&#
\subsubsection{Step 2: From $\kappa_n$ to $\kappa_{n+1}$}

We now state the following property: for all $n \geq1$ $\{ \kappa_n
<+\infty\}$, the following identity holds:
%e4 #&#
%
\begin{equation}
\label{e:representation-kappa} (\kappa_{n+1}-\kappa_n, r_{\kappa_{n+1}}
- r_{\kappa_n}) = (\kappa_1, r_{\kappa_1}) \bigl( \pi
_{r_{\kappa_n},\kappa_n}( B_{\kappa_n}) \bigr).
\end{equation}
The meaning of the above identity is that $(\kappa_{n+1}-\kappa_n,
r_{\kappa_{n+1}} - r_{\kappa_n})$ is identical to $(\kappa_1,
r_{\kappa
_1})$ applied to $\pi_{r_{\kappa_n},\kappa_n}( B_{\kappa_n})$, that is,
the system consisting only of trajectories of particles that are blue
at time $\kappa_n$, to which a space--time translation has been applied
so that $(r_{\kappa_n},\kappa_n)$ is sent to $(0,0)$. The fact that the
only trajectories playing a role are those in $B_{\kappa_n}$ is a
consequence of the forward $\alpha$ time property of $\kappa_n$:
particles that are red at time $\kappa_n$ do not have any influence on
the evolution of the front after time $\kappa_n$, since their
trajectories are confined below a space--time line of slope $\alpha$,
while the front is constrained to lie above this same line. Checking
the backward/forward sub/super-$\alpha$ time conditions in a way
similar to the proof of Step~1 above, precisely leads to identity (\ref
{e:representation-kappa}) (the details can be found in \cite{BerRamArxiv}).

%s3.3.3 #&#
\subsubsection{Step 3: Distribution of $\pi_{r_{T_k},T_k}( B_{T_k})$
given $\G^R_{T_k}$ under $\mathbb{P}_{\nu}$} We now establish the fact that,
under $\P_{\nu}$, the conditional
distribution of $\pi_{r_{T_k},T_k}(B_{T_k})$ with respect to $\G
^R_{T_k}$, is the same as that of $B_0$, conditioned by the fact that
every trajectory in $B_0$ avoids the space--time translated trajectory
of the front, that is, $ (r_{T_k+t}-r_{T_k} )_{-T_k \leq t < 0}$.
More formally, we have the following.
%pr5 #&#
%
\begin{prop}\label{p:conditionnement-1}
Let $F: \Omega\to\R$ denote a bounded measurable map. Then, for
all $k \geq1$, on the event that $T_k$ is upward,
\[
\E_{\nu} \bigl(F \bigl( \pi_{r_{T_k},T_k} ( B_{T_k})
\bigr) \mid \G^R_{T_k} \bigr) = \xi \bigl(F,
(r_{s+T_k}-r_{T_k})_{-T_k \leq s \leq0}, -r_{T_k} \bigr)\qquad\mbox{a.s.},
\]
%

%f2 #&#
%
\begin{figure}[b]

\includegraphics{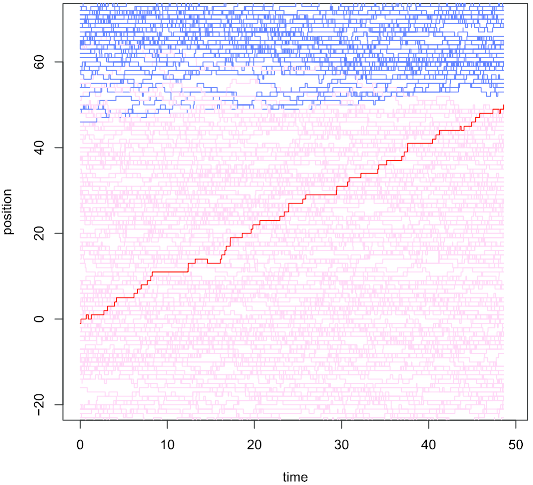}

\caption{Realization of the KS infection model. Pink trajectories
correspond to particles that are red \textit{at time $t$}, that is, $R_t$,
while blue trajectories correspond to particles that are blue {\it at
time $t$}, that is, $B_t$. The trajectory of the front is drawn in red.}
\label{f:simu-KS-retourne-2}
\end{figure}

\noindent
where,\footnote{Note that the definition makes sense since, as is
easily checked, $\P_{\nu}(G(q,x))>0$ for all c\`adl\`ag path
$q=(q_s)_{t \leq s \leq0}$ with values in $\Z$, taking
nearest-neighbor steps, such that $q_0 = 0$, $q_{0-}=-1$,
and containing a finite number of jumps.} given a path $q=(q_s)_{t \leq
s \leq0}$ with values in $\Z$,
\[
\xi(F,q,x):= \E_{\nu} \bigl( F(B_0) \mid G(q,x) \bigr),
\]
and
\index{Gqx@{$G(q,x)$}}%e5 #&#
\begin{equation}
\label{e:def-Gqx}G(q,x):= \bigl\{ \forall(W,u) \in B_0,
W_t > x, \forall t \leq s < 0, W_s > q_s
\bigr\}.
\end{equation}
\end{prop}

Figure~\ref{f:simu-KS-retourne-2} illustrates this conditioning.
Note that the avoidance condition obviously has to be satisfied.
Indeed, trajectories of particles that are blue at time $T_k$ must have
avoided the front during the time-interval $[0,T_k[$, for a particle in
contact with the front at a time $t<T_k$ will necessary be red at time
$T_k$. What is not obvious is that the influence upon $B_{T_k}$ of the
history of the whole process up to time $T_k$, admits such a simple description.

The core idea underlying the proof of Proposition~\ref
{p:conditionnement-1} is time-reversal. Instead of starting from the
description of the initial configuration at time zero, and trying to
understand how it evolves up to time $t$, we start from the
configuration at time~$t$, and express the relevant properties in terms
of the time-reversed trajectories. This is where the reversibility of
the Poisson distribution of particles with respect to systems of
independent random walks plays a key role. Broadly speaking, one starts
from a configuration at time $t$ with i.i.d. Poisson numbers of
particles at each site, and a prescribed value, say $x$, for $r_t$. One
can then express the whole history between time $0$ and time $t$ of
particles that are red at time $t$---including the trajectory of the
front from time $0$ to time $t$---in terms of random walk trajectories
going backward from those sites that are $<x$. On the other hand, the
history of particles that are blue at time $t$ is described by an {\it
independent} set of random walk trajectories going backward from sites
$\geq x$, with the constraint that they must avoid the front between
time $0$ and time $t$. This informal description is made precise in the
proof of Proposition~\ref{p:conditionnement-1} given below.

\begin{pf*}{Proof of Proposition~\ref{p:conditionnement-1}}
To avoid the main idea being obscured by technicalities, we start with
a simplified argument, which is not completely valid since it involves
conditioning upon the exact values taken by continuous random
variables. If our model were a discrete-time one, this argument would
immediately translate into a full proof. In our continuous-time
framework, additional approximation arguments are needed, of which we
give a sketch, referring to \cite{BerRamArxiv} for a full account of
the technical details.

Consider $y \in\Z$, $t>0$, $v \in[0,1]$, and a c\`adl\`ag path
$g=(g_s)_{0 \leq s \leq t}$ with values in $\Z$, taking
nearest-neighbor steps, such that $g_t = y$, $g_{t-}=y-1$. Now define
the event $J=J(t,y,v,g)$ by
\[
J = \bigl\{ T_k = t, r_{T_k} = y, U_k = v,
{(r_s)_{0 \leq s
\leq t} = (g_s)_{0 \leq s \leq t}} \bigr
\},
\]
where $U_k$ denotes the (random) label of the particle making the front
move at time~$T_k$. Then introduce a partition of the set of particle
paths defined by
\[
\Delta_+ = \bigl\{ (W,u) \in\Psi; W_{t} > y \mbox{ or }
(W_t=y \mbox{ and }u \neq v) \bigr\}
\]
and $\Delta_- = \Psi\setminus\Delta_+$. Finally, let $(r'_s)_{0
\leq
s \leq t}$ denote the position of the front generated by the particles
in $\Delta_-$ up to time $t$, $T'_k$ the $k$th time at which this front
moves, and $U'_k$ the label of the particle path making this front move
at time $T'_k$. Introduce the events $J'$ and $A$ defined by
\begin{eqnarray*}
J' &=& \bigl\{ T'_k = t,
r'_{T'_k} = y, U'_k = v, {
\bigl(r'_s \bigr)_{0 \leq s
\leq t} = (g_s)_{0 \leq s \leq t}}
\bigr\},
\\
A &=& \bigl\{ \forall(W,u) \in\Delta_+, W_0 > 0 \mbox{ and } \forall0
\leq s < t, W_s > g_s \bigr\}.
\end{eqnarray*}
The key identity to prove Proposition~\ref{p:conditionnement-1} is the
following:
%e6 #&#
%
\begin{equation}
\label{e:llave} J= J' \cap A.
\end{equation}

We first check the inclusion $J \subset J' \cap A$. Note that, on $J$,
by definition $\Delta_+$ coincides with $B_t$. As a consequence, the
two avoidance conditions in $A$ have to be satisfied since particles in
$\Delta_+$ have to be blue at time $t$. On the other hand, on~$J$, we
also have that $\Delta_-$ coincides with $R_t$. Since the history of
the front up to time $t$ is entirely prescribed by the dynamics of
particles that are red at time $t$, the quantities $r'_t, T'_k, U'_k$
must then coincide with $r_t, T_k, U_k$.

We now check the reverse inclusion $J' \cap A \subset J$. Let us prove
that, on $J$, one has $r'_s=r_s$ for all $0 \leq s \leq t$. By
contradiction, assume that there exists a time $s \leq t$ such that
$r'_s \neq r_s$, and let $s_0$ be the first such time. Due to the
condition $W_0>0$ for particles in $\Delta_+$, we must have $r'_0=r_0$,
so that $s_0>0$. Then the only possibility for $r'_{s_0}$ not to be
equal to $r_{s_0}$ is that some particle in $\Delta_+$ either makes
$r_s$ (and not $r'_s$) jump upward at time $s_0$, or prevents $r_s$
(and not $r'_s$) to jump downward at time $s_0$. In turn, this implies
that such a particle hits the position $r'_s$ at some time $s \leq
s_0$, which is ruled out by the definition of $A$. Knowing that
$r'_s=r_s$ for all $0 \leq s \leq t$, the other conditions in $J$ are
automatically satisfied.

Now, on $J=J' \cap A$, we have seen that $B_t=\Delta_+$ and
$R_t=\Delta
_-$. On the other hand, in view of the definition of $\Delta_+$ and
$\Delta_-$, the invariance of the probability $\P_{\nu}$ with respect
to space--time shifts (Proposition~\ref{p:shift-invariance}) entails that
%e7 #&#
%
\begin{equation}
\label{e=transl-inv-Delta} \bigl(\pi_{y,t}(\Delta_+),\pi _{y,t}(
\Delta_-) \bigr) \stackrel{d} {=} (B_0,R_0),
\end{equation}
and we note that $B_0$ and $R_0$ are independent due to the Poisson
structure of $\P_{\nu}$. Finally, we note that, from their definitions,
$J'$ is measurable w.r.t. $\Delta_-$ while $A$ is measurable w.r.t.
$\Delta_+$,
with the explicit representation in terms of $ \pi_{y,t}(\Delta_+)$:
\[
A= \bigl\{ \forall(W,u) \in\pi_{y,t}(\Delta_+), W_{-t} > -y, \forall-t \leq s' < 0, W_{s'} > g_{-t-s'}-y
\bigr\}.
\]

If $J$ were an event with nonzero probability, we would readily deduce
from the above results that, conditional upon $J=J' \cap A$, the random
variable $\pi_{r_{T_k},T_k}(B_{T_k} ) = \pi_{y,t}(\Delta_+)$ is
independent from $R_{T_k}=\Delta_-$, and follows the distribution of
$B_0$ conditioned upon the event
\[
\bigl\{ \forall(W,u) \in B_0, W_{-t} > -y, \forall-t
\leq s' < 0, W_{s'} > g_{-t-s'}-y \bigr\}.
\]
Moreover, from a countable decomposition into pairwise disjoint events
of the form
\[
\{ T_k \mbox{ is upward} \} = \bigsqcup
_{t,y,v,g} J(t,y,v,g),
\]
we would then deduce the conclusion of the proposition. However, in our
continuous-time framework, each event of the form $J(t,y,v,g)$ has zero
probability, and a countable decomposition as above does not exist. To
tackle this problem, we rely on discrete approximations, which allow us
to recover the conclusion in the limit. More precisely, letting
$T^{(\ell)}_k:= 2^{-\ell}(\lceil2^{\ell} T_k \rceil )$ and
$U^{(m)}_k:=
2^{-m}(\lceil2^{m} U_k \rceil )$, we can perform a countable decomposition
based on the values of $T^{(\ell)}_k$ and $U^{(m)}_k$, but it then
becomes necessary to show that the contributions of various undesirable
events (e.g., between time $T_k$ and $T^{(\ell)}_k$) lead to a
vanishing contribution when we take the limits $\ell,m \to+\infty$.
Moreover, to get decent convergence properties, we have to use
regularity properties of the Markov semigroup of $(X_t)_t$, and
characterize the conditional distribution of $\pi_{r_{T_k},T_k}(B_{T_k}
)$ through the conditional expectation of sufficiently nice functionals
$F$, the appropriate choice being $F=f_1(X_{t_1}) \times\cdots\times
f_p(X_{t_p})$,
where $t_1<\cdots< t_p$, and $f_1,\ldots, f_p$ are bounded and
uniformly continuous on $(\S_{\theta},d_{\theta})$.
\end{pf*}

%s3.3.4 #&#
\subsubsection{Step 4: Distribution of $\pi_{r_{\kappa_n},\kappa_n}(B_{\kappa_n})$ given $\G^R_{\kappa_n}$ under $\mathbb{P}_{\nu}$}
We now establish the fact that, under $\P_{\nu}$, on the event $\{
\kappa
_n < +\infty\}$, the conditional
distribution of $\pi_{r_{\kappa_n},\kappa_n}(B_{\kappa_n})$ with
respect to $\G^R_{\kappa_n}$, is the same as that of $B_0$, conditioned
by the event $H$ defined as\footnote{Corollary~\ref{c:prob-super-alpha}
proves that $\P_{\nu}(H)>0$.}%e8 #&#
\begin{equation}
\label{e:def-H}H:= \{ \mbox{$t=0$ is a backward and forward super-$\alpha$ time
for $B_0$} \}.
\end{equation}
More formally, we prove that, for all bounded measurable map $F:
\Omega\to\R$, one has that, on $\{\kappa_n < +\infty\}$,
%e9 #&#
%
\begin{equation}
\label{e:caract-kappa}\E_{\nu} \bigl(F \bigl( \pi_{r_{\kappa
_n},\kappa_n} (
B_{\kappa_n}) \bigr) \mid \G^R_{\kappa_n} \bigr) =
\E_{\nu} \bigl(F(B_0) \un_H \bigr) \qquad\mbox{a.s.}
\end{equation}

The key idea to go from Step~3 to the present result consists in
showing that, when $T_k=\kappa_n$, the avoidance condition that results
from conditioning $B_{T_k}$ by $R_{T_k}$, is subsumed by the condition
that $\kappa_n$ is both a backward super- and sub-$\alpha$ time.

Consider $C \in\G^R_{\kappa_n}$ such that $C \subset\{ \kappa_n <
+\infty\}$, and write the decomposition
%e10 #&#
%
\begin{equation}
\label{e:decoupe-esp}\E_{\nu} \bigl(F \bigl( \pi_{r_{\kappa
_n},\kappa_n} (
B_{\kappa_n}) \bigr) \un_C \bigr) = \sum
_{k \geq1} \E_{\nu} \bigl(F \bigl( \pi_{r_{T_k},T_k} (
B_{T_k}) \bigr) \un_{C} \un(\kappa_n=T_k)
\bigr).
\end{equation}

We first note that, for each $k \geq1$, the event $\{ \kappa_n = T_k
\}
$ corresponds to $t=0$ being a backward and forward super-$\alpha$ time
for $\pi_{r_{T_k},T_k}( B_{T_k})$, plus a\vspace*{1pt} set of conditions bearing
only on $R(T_k)$, that is, the trajectories of particles that are red
at time $T_k$, and implying among other things that $T_k$ is a backward
sub-$\alpha$ time.
As a consequence,
one can write $\{ \kappa_n = T_k \} = H_k \cap J_k$,
where\vspace*{1pt} $J_k \in\G^R_{T_k}$ and
$H_k:= \{ \mbox{$t=0$ is a backward and forward super-$\alpha$ time 
for $\pi_{r_{T_k},T_k} ( B_{T_k})$} \}$,
and with $J_k \subset\{ T_k$ is a backward sub-$\alpha$ time$\}$.
Moreover, since $C \in\G^R_{\kappa_n}$, one can write $C \cap\{
\kappa_n = T_k \} = D_k \cap\{ \kappa_n = T_k \}$,
where $D_k \in\G^R_{T_k}$. Putting things together, we obtain the identity
%e11 #&#
%
\begin{equation}
\label{e:encore-long}
\quad\E_{\nu} \bigl(F \bigl( \pi_{r_{T_k},T_k} (
B_{T_k}) \bigr) \un_{C} \un(\kappa_n=T_k)
\bigr) = \E_{\nu} \bigl(F \un_H \bigl( \pi
_{r_{T_k},T_k} ( B_{T_k}) \bigr) \un_{D_k \cap J_k} \bigr).
\end{equation}
%
%where $G:= \{ \mbox{$t=0$ is a backward super-$\alpha$ time} \}.$
We can now invoke Proposition~\ref{p:conditionnement-1}, using the fact
that $D_k,J_k \in\G^R_{T_k}$, leading to the identity
%e12 #&#
%
\begin{equation}
\label{e:encore-long-2} \E_{\nu} \bigl(F \un_H \bigl( \pi
_{r_{T_k},T_k} ( B_{T_k}) \bigr) \un_{D_k \cap J_k} \bigr) =
\E_{\nu}(K_k \un _{D_k \cap J_k}),
\end{equation}
where we have set $K_k:=\xi(F \un_H, (r_{s+T_k}-r_{T_k})_{-T_k \leq s
\leq0}, -r_{T_k})$.

We now make the key observation that, for a path
$q=(q_s)_{t \leq s \leq0}$ such that $q_0 = 0$ and $q_s < \alpha s$
for all $t \leq s < 0$, and $x$ such that $x < \alpha t$, the event
that $t=0$ is a backward super-$\alpha$ time implies the event
$G(q,x)$, so that in particular $H \subset G(q,x)$. As a consequence,
we can write
%e13 #&#
%
\begin{equation}
\label{e:encore-long-3}\xi(F \un_H,q,x)=\E_{\nu}
\bigl(F(B_0) \un_H(B_0) \mid G(q,x) \bigr)
= \frac{\E_{\nu}(F(B_0) \un_H)}{\P_{\nu}(G(q,x))}.
\end{equation}
By the fact that, on $J_k$, $T_k$ is a backward sub-$\alpha$ time, we
can precisely apply the above observation to the path $q=
(r_{s+T_k}-r_{T_k})_{-T_k \leq s \leq0} $ and the position
$x=-r_{T_k}$. Putting together (\ref{e:decoupe-esp}), (\ref
{e:encore-long}), (\ref{e:encore-long-2}), (\ref{e:encore-long-3}), we
obtain that
%e14 #&#
%
\begin{equation}
\label{e:fini}\E_{\nu} \bigl(F \bigl( \pi_{r_{\kappa_n},\kappa
_n} (
B_{\kappa_n}) \bigr) \un_C \bigr) = \E_{\nu}
\bigl(F(B_0) \un_H \bigr) \times h(C),
\end{equation}
where $h(C)$ is an expression depending on the event $C$ but not on
$F$. Using (\ref{e:fini}) with the choice $F \equiv1$ shows that
$h(C)=\P_{\nu}(C)$, so that (\ref{e:fini}) indeed proves identity
(\ref{e:caract-kappa}).

%s3.3.5 #&#
\subsubsection{Conclusion}
Part (a) of Proposition~\ref{p:renouvel-1} has been proved in Step~1.
As for part (b), we know from Step~2 that, on the event $\{ \kappa_n <
+\infty\}$, we have
\[
(\kappa_{n+1}-\kappa_n, r_{\kappa_{n+1}} -
r_{\kappa_n}) = (\kappa_1, r_{\kappa_1}) \bigl(
\pi_{r_{\kappa_n},\kappa_n}( B_{\kappa_n}) \bigr).
\]
From Step~4, the conditional distribution of $\pi_{r_{\kappa
_n},\kappa
_n}( B_{\kappa_n})$ with respect to $\G^R_{\kappa_n}$ is that of $B_0$
conditioned by the event that $t=0$ is a
backward and forward super-$\alpha$ time for $B_0$, which yields the
desired result.\vadjust{\eject}

%s4 #&#
\section{Estimates on the renewal structure}\label{s:estimates}

%s4.1 #&#
\subsection{Overview}

This section is devoted to the proof of Proposition~\ref
{p:moments-renouvel}. To control the tail of the random variables
$\kappa_1$ and $r_{\kappa_1}$, we rely on a sequence of stopping times
$S_1 \leq D_1 \leq D_2 \leq S_2 \leq\cdots,$ where $S_n$ is the $n$th
attempt at obtaining an \mbox{$\alpha$-}separation time, and, in case this
attempt fails, $D_n$ is the time at which the failure is detected,
while $D_n=+\infty$ if the attempt is successful. As a consequence, one
has that $\kappa_1 \leq S_{\K}$, where $\K:= \inf\{ n \geq1;   D_n
= +\infty\}$, and our approach consists in bounding the tail of the
number $\K$, and the tail of the increments $S_{n+1}-S_n$ and
$r_{S_{n+1}}-r_{S_n}$ on the event that $D_n<+\infty$.

The organization of this section is the following. In Section~\ref
{ss:def-SD}, we give the precise definition of the random variables
$S_n$ and $D_n$, while Section~\ref{ss:param} lists the various
parameters, assumptions and conventions, used in subsequent estimates.
In Section~\ref{ss:hitting-straight}, we prove elementary results on
the hitting times and hitting probabilities of a straight line by a
system of independent random walks. Section~\ref{ss:ball-est} is
devoted to an extension of the quantitative ballisticity estimates
obtained in \cite{KesSid} to the case where the initial distribution of
particles is restricted to locations above $x=0$. Section~\ref
{ss:cond-dist-Sn} contains an analogue of Proposition~\ref
{p:conditionnement-1} suited to the definition of the stopping time
$S_n$. Finally, Section~\ref{ss:tails} combines these ingredients to
prove the tail estimates on the renewal structure, that is, Proposition
\ref{p:moments-renouvel}.

%s4.2 #&#
\subsection{Definition of $S_n$ and $D_n$}\label{ss:def-SD}

We define by induction the sequence of stopping times on which our
estimates on the renewal structure are based. Besides $\alpha$, the
definition involves two integer parameters
$\mathscr{C} \geq1$ and $L \geq1$, and the following notion: given $0
\leq s < t$, we say that $t$ is an $(s,\alpha)$-\textit{crossing time} if
there exists $k \in\{1, 2, \ldots\}$ such that $r_v < r_s + k +
\alpha
(v-s)$ for all $v \in[s,t[$
and $r_t \geq r_s + k + \alpha(t-s)$.

To initialize the induction, let $D_0:=0$ and $\Upsilon_0:= \varnothing
$. Now, for $n \geq1$, assume that the random variables $D_{n-1},
\Upsilon_{n-1}$ have already been defined, and let \index{Spn@{$S'_n$}}
$S'_{n}$ be the infimum of the $t > D_{n-1}$ such that:
\begin{itemize}
\item$t$ is a backward sub-$\alpha$ time;
\item$\Upsilon_{n-1} \subset R_t$;
\item$B_t$ contains at least $\mathscr{C}$ particles $(W,u)$ such that
$W_t=r_t$.
\end{itemize}
Then define \index{Sn@{$S_n$}}
$S_n$ as the infimum of the $t > S'_{n}$ such that:
\begin{itemize}
\item$t$ is a backward sub-$\alpha$ time;
\item$]S'_{n}, t [$ contains a number of $(S'_n, \alpha)$-crossing
times at least equal to $L$;
\item$B_t$ contains at least $\mathscr{C}$ particles $(W,u)$ such that
$W_t=r_t$.
\end{itemize}

We use the notation \index{Wnst@{$(W^{*n}, u^{*n})$}} $(W^{*n},
u^{*n})$ for the particle that makes the front jump at time~$S_n$,
and define the subset \index{RSnst@{$R_{S_n}^*$}} $R_{S_n}^*:= R_{S_n}
\setminus\{ (W^{*n}, u^{*n}) \}$.
If $S_{n}$ is a backward super-$\alpha$ time, then $\Upsilon_n:=
\varnothing$ and \index{Dn@{$D_n$}} $D_n$ is defined as the infimum of
the $t>S_n$ such that {\it a least one} of the following five
conditions holds:
\begin{longlist}[(2)]
\item[(1)] $r_t < r_{S_n} + \lfloor\alpha(t-S_n) \rfloor $,
\item[(2)] $ t \leq S_n+\alpha^{-1}$ and there is no $(W,u) \in
B_{S_n}$ such that $W_{S_n} = r_{S_n}$ and $W$ remains at $r_{S_n}$
during $[S_n, S_n+t]$,
\item[(3)] $W_t > r_{S_n}-1 + \alpha(t-S_n)$ for some $(W,u) \in R_{S_n}^*$,
\item[(4)] $t \leq S_n+\alpha^{-1}$ and $W^{*n}_t \neq r_{S_n}$,
\item[(5)] $ t > S_n+\alpha^{-1}$ and $W^{*n}_t > r_{S_n} -1 + \alpha
(t-S_n)$.
\end{longlist}
Note that (1) and (2) detect the potential failure of $S_n$ to be a
forward super-$\alpha$ time, while (3)--(4)--(5) detect the potential
failure of $S_n$ to be a forward sub-$\alpha$ time.

On the other hand, if $S_{n}$ is not a backward super-$\alpha$ time,
consider the set of particle paths $(W,u) \in B_{S_n}$ such that there
exists $t < S_n$ for which $W_t < r_{S_n} - \alpha(S_n-t)$.
Among this set, consider the pair \index{Wn@{$(W^{(n)}, u^{(n)})$}}
$(W^{(n)},u^{(n)})$ such that $(W_{S_n},u)$ is the smallest with
respect to the lexicographical order,\footnote{Remember that
$(x_1,u_1)$ is smaller than $(x_2,u_2)$ with respect to the
lexicographical order if $x_1 < x_2$, or $x_1=x_2$ and $u_1<u_2$.} and define
$\Upsilon_n:= \{ (W^{(n)},u^{(n)}) \}$
and $D_{n}:= S_n$, so that, when nonempty, the set $\Upsilon_n$ can
be though of as containing a witness trajectory for the fact that $S_n$
failed to be a backward super-$\alpha$ time.

The reason why $S_n$ is not taken equal to $S'_n$ is a technical one,
and comes from the fact that, when proving tail estimates, one needs to
have some ``room'' between $S'_n$ and $S_n$ so that, at time $S_n$, the
configuration of particles below the front has ``smoothed out'' the
irregularities that may be present at time $S'_n$.

Some of the above definitions are illustrated in Figures~\ref
{f:figure-struct-KS-1},~\ref{f:figure-struct-KS-3} and~\ref
{f:figure-struct-KS-4}.

%f3 #&#
%
\begin{figure}[t]

\includegraphics{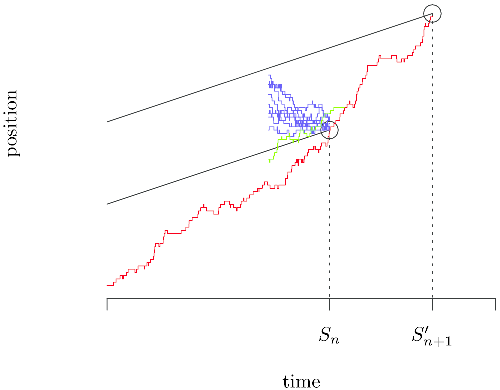}

\caption{From $S_n$ to $S'_{n+1}$ when $S_n$ fails to be a backward
super-$\alpha$ time (in this case $D_n=S_n$). Only the most relevant
portions of trajectories are shown. The trajectory of the front $r_t$
is depicted in red, while blue is used for the trajectories of blue
particles, except for the witness trajectory $W^{(n)}$, which is drawn
in green. Circles are used at locations where the number of particles
is assumed to be $\geq\mathscr{C}$.}
\label{f:figure-struct-KS-1}
\end{figure}

%f4 #&#
%
\begin{figure}[b]

\includegraphics{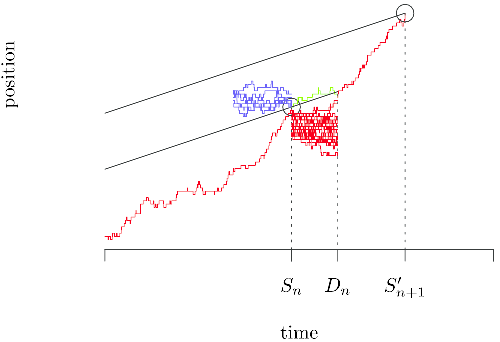}

\caption{From $S_n$ to $S'_{n+1}$ when $S_n$ is a backward
super-$\alpha
$ time but condition (1) is realized first. Only the most relevant
portions of trajectories are shown. The trajectory of the front $r_t$
is depicted in red, except for the part causing condition (1), which is
drawn in green. Circles are used at locations where the number of
particles is assumed to be $\geq\mathscr{C}$.}
\label{f:figure-struct-KS-3}
\end{figure}

%f5 #&#
%
\begin{figure}

\includegraphics{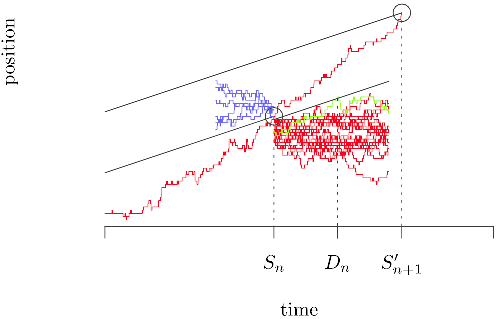}

\caption{From $S_n$ to $S'_{n+1}$ when $S_n$ is a backward
super-$\alpha
$ time but condition (3) is realized first. The trajectory of the front
$r_t$ is depicted in red, which is also used for red particles, except
the trajectory causing condition (3), which is drawn in green. Circles
are used at locations where the number of particles is assumed to be
$\geq\mathscr{C}$.}

\label{f:figure-struct-KS-4}
\end{figure}

%s4.3 #&#
\subsection{List of parameters, assumptions and conventions}\label{ss:param}

Let us recapitulate the list of parameters encountered so far:
$D_R=D_B>0$ is the common jump rate of red and blue particles, $\rho>0$
is the average number of particles per site in the initial Poisson
distribution of particles, $\theta>0$ is a parameter characterizing the
space $\S_{\theta}$ of particle configurations we work with, $\alpha>
0$ is the slope of space--time lines involved in the definition of the
renewal structure, $\mathscr{C}$ and $L$ are two additional integer
parameters involved in the definition of $S_n$ and $D_n$ given above.
While $D_R=D_B$ and $\rho$ are fixed parameters of the model, $\theta,
\alpha, \mathscr{C}, L$ can be chosen at our convenience. One more
parameter $\beta>0$ will play a
role in the following proofs.

Here are the assumptions on the various parameters that we assume to
hold throughout the sequel:
%e15 #&#
%
\begin{equation}
\label{e:constraints-param} \cases{ 0<\alpha< \beta< (1/3) C_2(\rho/4),
\vspace*{3pt}\cr
\alpha
\theta - 2(\cosh\theta-1) > 0,}
\end{equation}
where $C_2>0$ is defined in Proposition~\ref{p:ballistique-dessous-KS},
which is adapted from \cite{KesSid}. Such a choice of parameters is
always possible by choosing first $\alpha$ and $\beta$, then $\theta$
close enough to zero, using the fact that $\cosh(\theta)=1+o(\theta)$
when $\theta$ goes to zero. In addition to (\ref{e:constraints-param}),
we shall have to assume that $\mathscr{C}$ is large enough, and also
that $L$ is large enough (depending on $\mathscr{C}$). These
assumptions on $\mathscr{C}$ and/or $L$ will always be made explicit in
the sequel.

We now explain our convention for constants: what we call constants in
the rest of this section may depend on $\rho, \alpha, \beta, \theta$,
but unless otherwise mentioned, not on $\mathscr{C}$ or $L$. As a rule,
we use $c_1,c_2,\ldots$ to denote constants whose range of validity
extends throughout the section, and are used in the statement of
propositions or lemmas. On the other hand, we use $d_1,d_2,\ldots$ to
denote constants that are purely local to proofs.

%s4.4 #&#
\subsection{Hitting of a straight line by random walks}\label{ss:hitting-straight}

Starting with an initial configuration of particles $w \in\S_{\theta
}$, we establish two bounds on the hitting time and probability of a
straight line of slope $\alpha$ by one of the random walks whose
initial position is $\leq0$. In both cases, the key quantity is the
``exponential norm'' $\phi_{\theta}$ defined by
\[
\phi_{\theta}(w):= \sum_{x \leq0} \sum
_{u \in w(x)} e^{\theta x}.
\]
Lemma~\ref{l:controle-proba-atteinte} below gives (a) an upper bound on
the probability that the hitting time is finite and $\geq t$, and shows
(b) that an upper bound on the value of $\phi_{\theta}(w)$ translates
into a lower bound on the probability that none of the random walks
ever hits the straight line.

%le1 #&#
%
\begin{lemma}\label{l:controle-proba-atteinte}
The following bounds hold:
\begin{longlist}[(a)]
\item[(a)] For all $w \in\S_{\theta}$, and all $t \geq0$,
\[
\P_{w} \bigl( \exists(W,u)\ \exists s \geq t, W_0 \leq0,
W_s \geq\alpha s \bigr) \leq\phi_{\theta}(w) e^{-\mu t},
\]
where $\mu:= \alpha\theta- 2(\cosh\theta-1)>0$ [see (\ref
{e:constraints-param})].
\item[(b)] For all $K > 0$, there exists $g(K)>0$ such that, for all $w
\in\S_{\theta}$
such that $\phi_{\theta}(w) \leq K$, one has
\[
\P_{w} \bigl( \forall(W,u) \mbox{ such that }W_0 \leq0,
\forall t > 0, \mbox{ one has } W_t < \alpha t \bigr) \geq g(K).
\]
\end{longlist}
\end{lemma}

The proof of the lemma is based on the following elementary result for
a single random walk, which we state and prove first.

%le2 #&#
%
\begin{lemma}\label{l:atteinte-droite}
Let \index{zetas@{$\zeta_s$}} $(\zeta_s)_{s \geq0}$ be
a continuous-time simple symmetric random walk on $\Z$
with total jump rate $2$ starting at $x \leq0$, with respect to a
probability measure~$P_x$.
Then for all $t \geq0$
\[
P_x ( \exists s \geq t; \zeta_s \geq\alpha s ) \leq
e^{\theta x} e^{- \mu t}.
\]
\end{lemma}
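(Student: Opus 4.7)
The natural approach is an exponential martingale / Doob's maximal inequality argument, exploiting the fact that $\mu>0$ by \eqref{e:constraints-param}.

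First I would consider the process
$$M_s := \exp\bigl(\theta \zeta_s - 2(\cosh\theta-1)s\bigr), \qquad s \geq 0.$$
Since the generator of a continuous-time simple symmetric random walk with total jump rate $2$ applied to $y \mapsto e^{\theta y}$ yields $(e^{\theta}+e^{-\theta}-2)e^{\theta y} = 2(\cosh\theta-1)e^{\theta y}$, a routine computation shows that $(M_s)_{s \geq 0}$ is a non-negative $P_x$-martingale with $M_0 = e^{\theta x}$.

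Next I would observe the crucial set inclusion. On the event $\{\exists s \geq t \ : \ \zeta_s \geq \alpha s\}$, picking any such $s$ gives
$$M_s \geq \exp\bigl(\theta \alpha s - 2(\cosh\theta-1)s\bigr) = e^{\mu s} \geq e^{\mu t},$$
where the last inequality uses $s \geq t$ together with $\mu>0$ (which is precisely what \eqref{e:constraints-param} guarantees, via the definition \eqref{e:def-mu}). Hence
$$\{\exists s \geq t \ : \ \zeta_s \geq \alpha s\} \subseteq \Bigl\{ \sup_{s \geq 0} M_s \geq e^{\mu t} \Bigr\}.$$

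Finally I would apply Doob's maximal inequality for the non-negative martingale $M$, which yields
$$P_x\Bigl( \sup_{s \geq 0} M_s \geq e^{\mu t} \Bigr) \leq e^{-\mu t} \, E_x[M_0] = e^{\theta x} e^{-\mu t},$$
which combined with the inclusion above gives the desired bound. There is no real obstacle here; the only subtlety is that one needs the supremum to be taken over $s \geq 0$ (not just $s \geq t$) to apply Doob cleanly, which is fine because we bounded $M_s$ below by $e^{\mu t}$ uniformly over $s \geq t$. The positivity of $\mu$ coming from the parameter constraints is the one substantive ingredient.
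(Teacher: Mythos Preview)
Your proof is correct and essentially the same as the paper's: you both use the exponential martingale $M_s = \exp(\theta\zeta_s - 2(\cosh\theta-1)s)$ and the key observation that on the event in question $M_s \geq e^{\mu t}$, with $\mu>0$ from \eqref{e:constraints-param}. The only cosmetic difference is that the paper applies optional stopping at $T=\inf\{s\geq t:\zeta_s\geq\alpha s\}$ together with Fatou's lemma, whereas you package this step as an invocation of Doob's maximal inequality for the non-negative martingale $M$; the two are equivalent here.
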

\begin{pf}
For all $s \geq0$, let $M_s:=e^{\theta\zeta_s - 2 (\cosh(\theta)
-1)s}$, and $T:= \inf\{ s \geq t;   \zeta_s \geq\alpha s \}$.
Then $(M_s)_{s \geq0}$ is a c\`adl\`ag martingale, and $T$ is a
stopping time, so that, for all finite $K>0$, one has
%e16 #&#
%
\begin{equation}
\label{e:Doob-1} E_x(M_{T \wedge K}) = E_x(M_0)
= e^{\theta x}.
\end{equation}
Now we have that $\liminf_{K \to+\infty} M_{T \wedge K} \geq M_{T}
\un
(T<+\infty)$, so that, by Fatou's lemma and (\ref{e:Doob-1}),
%e17 #&#
%
\begin{equation}
\label{e:magic-martingale-1}E_x \bigl( M_{T} \un(T<+\infty ) \bigr)
\leq e^{\theta x}.
\end{equation}
Now, by definition of $T$, one has that, on $\{ T < +\infty\}$,
%e18 #&#
%
\begin{equation}
\label{e:magic-martingale-2}M_T \geq e^{ \theta\alpha
T - 2 (\cosh(\theta) -1)T} = e^{\mu T} \geq
e^{\mu t},
\end{equation}
where the last inequality comes from the fact that $\mu> 0$ and $T
\geq t$.
The result now follows from combining (\ref{e:magic-martingale-1}) and
(\ref{e:magic-martingale-2}).
\end{pf}

\begin{pf*}{Proof of Lemma~\ref{l:controle-proba-atteinte}}
First note that part (a) of the lemma is a direct consequence of Lemma
\ref{l:atteinte-droite} and of the union bound over each particle.
We now prove part~(b), using the notation already appearing in the
statement of Lemma~\ref{l:atteinte-droite}.

Let us choose $\theta'>\theta$ such that $\mu':= \alpha\theta' - 2
(\cosh(\theta') -1) > 0$ (this is possible since $\mu>0$), and observe
that Lemma~\ref{l:atteinte-droite} holds with $\theta', \mu'$ instead
of $\theta, \mu$. We deduce that, for all $x < 0$, we have
%e19 #&#
%
\begin{equation}
\label{e:proba-atteinte-prime}\mathfrak{p}(x) \leq e^{\theta' x},
\end{equation}
where
\[
\mathfrak{p}(x):= P_x ( \exists s > 0; \zeta_s \geq
\alpha s ).
\]
Moreover, we must have $\mathfrak{p}(0) < 1$, for otherwise we could
prove that
\[
P_0 \Bigl( \limsup_{t \to+\infty} \zeta_t/t
\geq\alpha \Bigr) = 1,
\]
which would contradict the law of large numbers.
Since all the random walks in our model evolve independently, we can
rewrite the probability we want to bound from below as
\[
\prod_{x \leq0} \bigl( 1 - \mathfrak{p}(x)
\bigr)^{\llvert  w(x)\rrvert  }.
\]
Now the inequality $\phi_{\theta}(w) \leq K$ implies that, for all $x
\leq0$, one has that
%e20 #&#
%
\begin{equation}
\label{e:borne-somme-exp} \bigl\llvert w(x) \bigr\rrvert \leq e^{- \theta x} K.
\end{equation}
As a consequence, we have the bound
\[
\prod_{x \leq0} \bigl( 1 - \mathfrak{p}(x)
\bigr)^{\llvert  w(x)\rrvert  } \geq \biggl( \prod_{x
\leq0}
\bigl(1 - \mathfrak{p}(x) \bigr)^{ e^{- \theta x} K} \biggr).
\]
In view of (\ref{e:proba-atteinte-prime}) and of the fact that $\theta
'> \theta$, we have that $\sum_{x \leq0} e^{- \theta x} e^{\theta' x}
< +\infty$, so the right-hand side of the above inequality is $>0$, and
depends only on $K$.
\end{pf*}

%s4.5 #&#
\subsection{Ballisticity estimates}\label{ss:ball-est}

We start by recalling the quantitative ballisticity estimates derived
in \cite{KesSid}.
%pr6 #&#
%
\begin{prop}\label{p:ballistique-dessus-KS}
There exist a constant \index{C1rho@{$C_1(\rho)$}} $C_1(\rho)>0$ and a
constant $\Clconstcccpenki$, depending on $\rho$ and $\mathscr{C}$, such
that, for every $t > 0$,
\[
\P_{\nu} \bigl(r_t \geq C_1(\rho) t \bigr)
\leq\Clconstcccpenki \exp(-t).
\]
\end{prop}
%
%pr7 #&#
%
\begin{prop}\label{p:ballistique-dessous-KS}
There exists a constant \index{C2rho@{$C_2(\rho)$}} $C_2(\rho)>0$ such
that, for all $K > 0$, there exists a constant $\Clconstcccnulis$,
depending on $\rho$ and $K$, such that, for every $t > 0$,
\[
\P_{\nu} \bigl(r_t \leq C_2(\rho) t \bigr)
\leq \Clconstcccnulis t^{-K}.
\]
\end{prop}
Note that, strictly speaking, Propositions~\ref
{p:ballistique-dessus-KS} and~\ref{p:ballistique-dessous-KS} do not
appear in \cite{KesSid}, which uses slightly different definitions for
the front and the initial condition. However, they are rather easily
derived from Theorems 1 and 2 in \cite{KesSid}.

When trying to control the tail of $\kappa_1$, we encounter situations
where ballisticity estimates similar to those appearing in Propositions
\ref{p:ballistique-dessus-KS} and~\ref{p:ballistique-dessous-KS} are
needed, but where the initial condition consists only of particles
located at the right of the origin. To be specific, we define $\nu
_{\mathscr{C},+}$ \index{nuCp@{$\nu_{\mathscr{C},+}$}} to be the
probability distribution on $\S_{\theta}$ obtained by starting from the
Poisson distribution $\nu$, removing every particle whose location is
$<0$, and conditioning the number of particles located at $x=0$ to be
$\geq\mathscr{C}$. The corresponding distribution on the space of
trajectories is denoted by $\P_{\nu_{\mathscr{C},+}}$.

Adapting the upper bound (Proposition~\ref{p:ballistique-dessus-KS})
with $\nu_{\mathscr{C},+}$ instead of $\nu$ turns out to be rather
straightforward, since removing red particles from the initial
condition cannot increase the position of the front [see equation (\ref{e:monotonie})].
The precise result we need in the sequel is an easy corollary from this
adaptation, and we quote it without proof.
%pr8 #&#
%
\begin{prop}\label{p:ballistique-dessus-KS-extension}
Let \index{Cp1rho@{$C'_1(\rho)$}} $C'_1(\rho):=C_1(\rho)+1$. There
exist strictly positive constants $\Clconstcccseptyni , \Clconstcccdesimt $,
with $\Clconstcccseptyni$ depending on $\mathscr{C}$, such that, for every $t > 0$,
\[
\P_{\nu_{\mathscr{C},+}} \bigl(\exists s \geq0; r_s \geq
C'_1(\rho) \max (s,t) \bigr) \leq\Clconstcccseptyni \exp(-
\Clconstcccdesimt t).
\]
\end{prop}
On the other hand, adapting the lower bound (Proposition~\ref
{p:ballistique-dessous-KS}) requires more work. The precise result we
have is the following, with $\beta>0$ being defined in~(\ref
{e:constraints-param}).
%pr9 #&#
%
\begin{prop}\label{p:ballistique-dessous}
There exist constants $\Clconstcccvienas , \Clconstcccdu  >0$, with $\Clconstcccvienas$ depending on $\mathscr{C}$, such that, for every $t > 0$,
\[
\P_{\nu_{\mathscr{C},+}}( \exists s \geq t; r_s \leq\alpha s ) \leq
\Clconstcccvienas t^{-\Clconstcccdu \cdot\mathscr{C}}.
\]
\end{prop}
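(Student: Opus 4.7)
Plan. The strategy is to deduce Proposition \ref{p:ballistique-dessous} from the Kesten--Sidoravicius ballisticity estimate (Proposition \ref{p:ballistique-dessous-KS}) by a coupling argument that uses the slack $\beta < (1/3) C_2(\rho/4)$ to absorb the cost of replacing the missing Poisson($\rho$) mass on $(-\infty,0)$, and uses the conditioning $|N_0| \geq \mathscr{C}$ to push the polynomial exponent up to $\Cr{cc2}\cdot \mathscr{C}$.

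The key structural fact I would exploit is that in the single-rate model ($D_R = D_B$), the label (red or blue) of a particle located below the current front is irrelevant for the future evolution of the front: both colors move as independent random walks at the same rate, and a blue particle below the front is re-coloured red upon contact, so that $r_t$ is a deterministic function of the full family of random-walk trajectories together with the initial red set. On an enlarged probability space, I would then couple $\nu_{\mathscr{C},+}$ to a $\nu^{\rho/4}$-configuration obtained by (i) thinning the Poisson($\rho$) particles of $\nu_{\mathscr{C},+}$ on $[0,\infty)$ to an independent Poisson($\rho/4$) sublayer, and (ii) adjoining an independent auxiliary Poisson($\rho/4$) of red seeds on $(-\infty,0)$. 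Proposition \ref{p:ballistique-dessous-KS} at density $\rho/4$ applied with the free parameter $K := 2\Cr{cc2}\cdot \mathscr{C}$ then yields that the front $\bar r_t$ of this auxiliary $\nu^{\rho/4}$-configuration satisfies $\bar r_t \geq C_2(\rho/4)\, t$ outside an event of probability $\leq \Cr{cc1}(\mathscr{C})\, t^{-2\Cr{cc2}\mathscr{C}}$, the $\mathscr{C}$-dependence of the $K$-indexed KS prefactor being absorbed into $\Cr{cc1}$.

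It remains to transfer the bound from $\bar r_t$ to $r_t^{\nu_{\mathscr{C},+}}$ with a controlled loss of at most $\alpha t$ in speed. I would argue that the auxiliary red seeds on $(-\infty,0)$ influence $\bar r_t$ only through trajectories or infection chains that cross the space-time line $\{y = \alpha s\}$ at some $s \in [0,t]$: by Lemma \ref{l:atteinte-droite} each auxiliary seed started at $x<0$ crosses this line with probability at most $e^{\theta x}$, and combining with Lemma \ref{l:controle-proba-atteinte-2} and the geometric weight built into $\S_\theta$ bounds the total crossing probability by $c\, e^{-\mu t}$ (where $\mu$ is as in \eqref{e:def-mu}); this is upgraded to a $t^{-\Cr{cc2}\mathscr{C}}$ bound by iterating the one-step estimate on $\mathscr{C}$ disjoint sub-intervals of $[0,t]$ via the strong Markov property, the $\geq \mathscr{C}$ reservoir at the origin guaranteeing enough red particles at the current front to restart the argument at each step. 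Outside this error event the auxiliary seeds cannot influence the region $\{y > \alpha s\}$, so $r_t^{\nu_{\mathscr{C},+}} \geq \bar r_t - \alpha t \geq (C_2(\rho/4) - \alpha)\, t \geq 2\beta t$ by \eqref{e:constraints-param}. The main technical obstacle is precisely this transfer step: the natural monotone coupling (adding red seeds speeds the front up) gives the \emph{wrong} inequality $\bar r_t \geq r_t^{\nu_{\mathscr{C},+}}$, and the reverse inequality up to an $\alpha t$ error requires a delicate accounting of the infection chains initiated by the auxiliary seeds, for which the exponential-moment and hitting-time estimates of Section \ref{s:estimates} are essential.
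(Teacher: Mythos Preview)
Your proposal has a genuine gap in the transfer step, and the paper's route is substantially different.

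The core problem is your claim that the auxiliary Poisson($\rho/4$) seeds on $(-\infty,0)$ can be removed at cost $\leq c\,e^{-\mu t}$ (later ``upgraded'' to $t^{-\Cr{cc2}\mathscr{C}}$). First, Lemma~\ref{l:controle-proba-atteinte-2} controls crossings of the line $y=\alpha s$ for $s \geq t$, not $s \in [0,t]$; the probability that \emph{some} auxiliary seed at $x<0$ ever crosses $y=\alpha s$ is bounded only by $\sum_{x<0} |w_{\mathrm{aux}}(x)|\,e^{\theta x}$, whose $\nu^{\rho/4}$-expectation is a constant independent of $t$. So the $e^{-\mu t}$ decay you invoke is not there. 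Second, even conditioning on no auxiliary \emph{walk} crossing the line does not decouple the two systems: an auxiliary particle that stays below the line can still infect a $\Psi'$-particle that dips down, and that $\Psi'$-particle can then climb above $\alpha s$ and drive $\bar r$ to places the $\Psi'$-only front would not reach. Your ``infection chain'' heuristic does not rule this out, because the chains include $\Psi'$-particles that trivially sit above the line at time~$0$. Finally, the ``iterate on $\mathscr{C}$ disjoint sub-intervals'' step is not a mechanism that turns a constant error probability into a polynomial one; there is no independence to exploit across sub-intervals for this event.

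The paper avoids the wrong-direction coupling entirely. It first proves, via a direct random-walk argument exploiting the $\geq \mathscr{C}$ particles at the origin (Lemma~\ref{l:au-dessus-de-zero}), that $\inf_{s\in[(2/3)t,\,t]} r_s \geq 0$ except on an event of probability $\leq c\,t^{-c'\mathscr{C}}$; this is where the exponent $\mathscr{C}$ is produced. On that event one dominates $r_t$ from below by a \emph{modified} front $\hat r$ (never allowed to go below $0$) restarted at time $(2/3)t$. The one-sided configuration seen by $\hat r$ at time $(2/3)t$ stochastically dominates Poisson($\rho/2$) on $[0,\infty)$, and a reflection map $\mathscr{T}$ (folding walks started on $(-\infty,0)$ onto $[0,\infty)$) together with Lemma~\ref{l:croissance-modif} shows this in turn dominates the genuine two-sided KS front at density $\rho/4$ run for time $t/3$. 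Proposition~\ref{p:ballistique-dessous-KS} with $K=\mathscr{C}$ then finishes, the factor $1/3$ being exactly why $\beta < (1/3)\,C_2(\rho/4)$ appears in \eqref{e:constraints-param}. The two key ideas you are missing are the reflection comparison and the separation of the $\mathscr{C}$-exponent argument (which is purely about keeping the front nonnegative) from the ballisticity input.
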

The following corollary to Proposition~\ref{p:ballistique-dessous},
whose proof is given in the last part of the present subsection, shows
that the super-$\alpha$ time condition for $B_0$, which constitutes
``half'' of the conditions involved in the definition of the renewal
structure, has indeed a positive probability with respect to $\P_{\nu
_{\mathscr{C},+}}$.
%co1 #&#
%
\begin{coroll}\label{c:prob-super-alpha}
For all large enough $\mathscr{C}$,
\[
\P_{\nu_{\mathscr{C},+}} \bigl(\{\mbox{$t=0$ is a backward and forward super-$\alpha$
time for $B_0$} \} \bigr) > 0.
\]
\end{coroll}

The rest of this subsection is mainly devoted to the proof of
Proposition~\ref{p:ballistique-dessous}, which is based on coupling the
evolution of the front with respect to $\P_{\nu_{\mathscr{C},+}}$ with
a modified version of the dynamics, then using a symmetrization trick
to enable a comparison with an initial configuration consisting of
i.i.d. Poisson numbers of particles on the whole of $\Z$.

%s4.5.1 #&#
\subsubsection{Step 1: Monotone couplings}

We start by defining a modified version of the infection dynamics. In
the modified version, the front is at zero at time zero and, after time
zero, the dynamics is defined as the original one, with the difference
that the front is never allowed to go below level zero (i.e., a jump
that would make the front go below zero for the original dynamics has
no effect on the front in the modified dynamics). We call $(\hat
{r}_s)_{s \geq0}$ the trajectory of the corresponding front.

Our first statement is that both the original front $r_t$ and the
modified front $\hat{r}_t$ are nondecreasing with respect to the
addition of particles in the system. Indeed, we claim that, for all
$\psi_1,\psi_2 \in\Omega$ such that $\psi_1 \subset\psi_2$, one has,
for all $t \geq0$,
%e21 #&#
%
\begin{equation}
\label{e:monotonie} r_t(\psi_1) \leq r_t(
\psi_2)
\end{equation}
and
%e22 #&#
%
\begin{equation}
\label{e:monotonie-modif} \hat{r}_t(\psi_1) \leq \hat
{r}_t(\psi_2).
\end{equation}
The proof of (\ref{e:monotonie}) consists in observing that, by
definition, $r_0(\psi_1) \leq r_0(\psi_2)$, and that, since only
nearest-neighbor steps can be performed, the trajectories of both
fronts must meet before crossing each other. Assuming that at a time
$s$ one has $r_s(\psi_1) = r_s(\psi_2)$, and calling $t$ the next time
at which any of the fronts jumps, the assumption that $\psi_1 \subset
\psi_2$ entails that, if $t$ is upward for $\psi_1$, it is also upward
for $\psi_2$, while, if $t$ is downward
for $\psi_2$, it is also downward for $\psi_1$, so that $r_t(\psi_1)
\leq r_t(\psi_2)$ in any case. We argue similarly to prove (\ref
{e:monotonie-modif}).

Next, we claim that the modified front always dominates the original
front, in the sense that, for all $\psi\in\Omega$, one has, for all
$t \geq0$,
%e23 #&#
%
\begin{equation}
\label{e:croissance} r_t(\psi) \leq\hat{r}_t(\psi),
\end{equation}
which can be proved with an argument quite similar to that used for
(\ref{e:monotonie}).

Finally, we prove the monotonicity of the modified front with respect
to a symmetrization of trajectories that we implement through a map
$\mathscr{T}:  \Omega\to\Omega$.
Consider a pair $(W,u)$. If
$W_0 \geq0$, then let $W^{\times}:=W$. On the other hand, if $W_0<0$, consider
$\tau:= \inf\{ s > 0;   W_s = 0 \}$, and let $W^{\times}_s:=-W_s$ on
$]{-}\infty, \tau[$ and
$W^{\times}_s:=W_s$ on $[\tau,+\infty[$. Now let
\[
\mathscr{T}(\psi):= \bigl\{ \bigl(W^{\times}, u \bigr); (W,u) \in\psi
\bigr\}.
\]
We can now state the monotonicity property: for all $\psi\in\Omega$,
and for all $t \geq0$,
%e24 #&#
%
\begin{equation}
\label{e:croissance-modif} \hat{r}_t(\psi) \leq\hat {r}_t \circ
\mathscr{T}(\psi).
\end{equation}
To prove (\ref{e:croissance-modif}), we argue as in the proof of (\ref
{e:monotonie}), so that it is enough to prove that, if $\hat{r}_s =
\hat
{r}_s \circ\mathscr{T}$, and if $t$ denotes the first time after $s$
at which any of the fronts jumps, one has $\hat{r}_t \leq\hat{r}_t
\circ\mathscr{T}$.
Assume that $t$ is upward for $\hat{r}$. Then by definition the
corresponding random walk $W$ is such that $W_s \geq0$, so that
$W^{\times}$ coincides with $W$ on $[s, +\infty[$, and so $t$ is
also upward for $\hat{r} \circ\mathscr{T}$. On the other hand, if $t$
is downward for $\hat{r} \circ\mathscr{T}$, then the common location
of the fronts has to be $\geq1$, and there must be at least
one $(W,u) \in\Psi$ such that $W_s = W^{\times}_s = \hat{r}_s$. In
fact, there cannot be more than one such $(W,u)$, since otherwise $t$
could not be downward for $\hat{r} \circ\mathscr{T}$. As a
consequence, there is only one such $(W,u)$, and $t$ must also be
downward for $\hat{r}$.

%s4.5.2 #&#
\subsubsection{Step 2: Comparison between distributions}

Given $t \geq0$, let $t_0:=t/3$, $t_1:= (2/3)t$, and, for $s \in
[t_1,t]$, define $r^{(1)}_s:= \hat{r}_{s-t_1} \circ\pi_{0, t_1}$.
Then define four distributions $\nu_+,\nu_1,\nu_2,\nu_3$ on $\S
_{\theta
}$, in the following way. First, let $\nu_+$ be the probability
distribution on $\S_{\theta}$ obtained by starting from the Poisson
distribution $\nu$, then removing every particle whose location is
$<0$. Then let $(N^{(1)}_x)_{x \in\Z}$ denote an independent family of
Poisson processes on $[0,1]$,
where, for all $x \in\Z$, the rate of $N^{(1)}_x$ is equal to $\rho
p_{t_1}(x, \N)$, with
$p_{t_1}(x,\N):= \sum_{y \in\N} p_{t_1}(x,y)$. Define $\nu_1$ as the
distribution induced by
$(N^{(1)}_x)_{x \in\Z}$ on $\S_{\theta}$. Define also $(N^{(2)}_x)_{x
\in\Z}$ to be an\vspace*{1pt} independent family of Poisson processes on $[0,1]$,
where the rate of $N^{(2)}_x$ is $\rho/2$ for $x \geq1$, $\rho/4$ for
$x = 0$, and $0$ for $x < 0$, and define $\nu_2$ as the distribution
induced by $(N^{(2)}_x)_{x \in\Z}$. Finally, define $\nu_3$ exactly as
$\nu$, with the difference
that the constant value of the rate is equal to $\rho/4$ instead of
$\rho$.

We now claim that
%e25 #&#
%
\begin{equation}
\label{e:compare-final} {r}_{t_0}( \P_{\nu_3} ) \prec
r^{(1)}_{t}( \P_{\nu_{\mathscr{C}, +}}),
\end{equation}
where $\prec$ denotes stochastic domination between probability
measures on $\R$. We also use stochastic domination on $\S_{\theta}$
equipped with the order relation induced by inclusion between sets,
that is, $w_1 \leqslant w_2$ when $w_1(x) \subset w_2(x)$ for all $x
\in\Z$.

To begin with, one checks that $\nu_{+}$ is stochastically dominated by
$\nu_{\mathscr{C},+}$. As a consequence, the distribution of $X_{t_1}$
with respect to $\P_{\nu_{\mathscr{C}, +}}$ stochastically dominates~$\nu_1$. Using (\ref{e:monotonie-modif}), we deduce that
%e26 #&#
%
\begin{equation}
\label{e:comp-loi-1} r^{(1)}_{t}( \P_{\nu_{+}}) \prec
r^{(1)}_{t}( \P_{\nu_{\mathscr{C}, +}}).
\end{equation}
Then observe that $\nu_1$ is the distribution of $X_{t_1}$ with
respect to
$\P_{\nu_{+}}$, so that
%e27 #&#
%
\begin{equation}
\label{e:comp-loi-0} \hat{r}_{t_0}( \P_{\nu_1} ) =
r^{(1)}_{t}( \P_{\nu_{+}}).
\end{equation}
Then $\nu_2$ is stochastically dominated by $\nu_1$, since, for all $x
\geq0$, we have $p_{t_1}(x, \N) \geq1/2$. By (\ref
{e:monotonie-modif}), we deduce that
%e28 #&#
%
\begin{equation}
\label{e:comp-loi-2}\hat{r}_{t_0}( \P_{\nu_2} ) \prec
\hat{r}_{t_0}( \P_{\nu_1} ).
\end{equation}
We also have that the image of the probability measure $\P_{\nu_3}$ by
the map $\mathscr{T}$ is $\P_{\nu_2}$, so that, by (\ref
{e:croissance-modif}),
%e29 #&#
%
\begin{equation}
\label{e:comp-loi-3} \hat{r}_{t_0}( \P_{\nu_3} ) \prec
\hat{r}_{t_0}( \P_{\nu_2} ).
\end{equation}
Using (\ref{e:croissance}), we finally deduce that
%e30 #&#
%
\begin{equation}
\label{e:comp-loi-4} {r}_{t_0}( \P_{\nu_3} ) \prec \hat
{r}_{t_0}( \P_{\nu_3} ).
\end{equation}
Putting together (\ref{e:comp-loi-1}), (\ref{e:comp-loi-0}), (\ref
{e:comp-loi-2}), (\ref{e:comp-loi-3}), (\ref{e:comp-loi-4}), we see that
(\ref{e:compare-final}) is proved.

%s4.5.3 #&#
\subsubsection{Step 3: Sojourn above zero}

To successfully exploit the comparisons established in Step~2, we need
to control the probability, with respect to $\P_{\nu_{\mathscr{C},
+}}$, that the front remains for a substantial amount of time above the
origin. Our claim is that there exist constants $\Clconstccavienas$,
$\Clconstccadu >0$, with $\Clconstccavienas$ depending on $\mathscr{C}$, such
that, for every $t > 0$,
%e31 #&#
%
\begin{equation}
\label{e:au-dessus-de-zero} \P_{\nu_{\mathscr{C}, +}} \Bigl(\inf_{s \in[(2/3) t,t]}
r_s \leq0 \Bigr) \leq\Clconstccavienas t^{-\Clconstccadu \cdot\mathscr{C}}.
\end{equation}

We now proceed to the (essentially self-contained and elementary) proof
of~(\ref{e:au-dessus-de-zero}). Let $t_0:=t/3$. Then fix a real number
$0<v<\sqrt{2/3}$, and define
$y_{t_0}:= \lfloor v (t_0 \log t_0)^{1/2} \rfloor $ and $\varepsilon(t_0):=
\frac{t_0^{-v^2/4}}{v (\log t_0)^{1/2}}$. Let\vspace*{1pt} $(\zeta_s)_{s \geq0}$
denote a continuous-time simple
symmetric random walk with total jump rate $2$ starting at site $x$,
with respect to a probability measure $P_x$.
By a standard local limit theorem,\footnote{See, for example, \cite
{Fel}, Chapter~XVI, Section~6 on Large Deviations for the case of a discrete-time random walk. The
continuous-time follows easily, by controlling the fluctuations of the
number of steps performed by the walk.}
we have that, as $t$ goes to infinity,
%e32 #&#
%
\begin{equation}
\label{e:tll}P_0(\zeta_{t_0} \leq-y_{t_0})
\sim \Clconstddcvienas  \varepsilon(t_0),
\end{equation}
where $\Clconstddcvienas$ is a positive constant. Using the reflection principle,
we deduce that there exists a strictly positive constant $\Clconstddbv$ such that, for large $t$,
\[
P_0 \Bigl( \inf_{s \in[0,t_0] } \zeta_s
\leq-y_{t_0} \Bigr) \leq \Clconstddbv \varepsilon(t_0).
\]
Now let $\mathfrak{Z}_s$ denote the supremum of the positions at time
$s$ of the particle paths that are
located at the origin at time zero, and let $C_1$ denote the event that
$\mathfrak{Z}_s > -y_{t_0}$ for all $s \in[0,t_0]$.
Since the number of these particle paths is at least
$\mathscr{C}$, we deduce that
%e33 #&#
%
\begin{equation}
\label{e:borne-C1c} \P_{\nu_{\mathscr{C}, +}} \bigl(C_1^c \bigr)
\leq \Clconstddbv^{\mathscr{C}} \varepsilon(t_0)^{\mathscr{C}}.
\end{equation}
Now let $z_{t_0}:= \lfloor \varepsilon(t_0)^{-3} \rfloor $, and
consider the
number $\NNN$ of particle paths whose location at time zero lies in the
interval $[0, z_{t_0}]$. Let $C_2$ denote the event that $\NNN$ is at
least equal to
$\rho z_{t_0}/2$. By standard large deviations bounds for Poisson
random variables (see, e.g., \cite{DemZei}), we have that, for all
large $t$,
%e34 #&#
%
\begin{equation}
\label{e:borne-C2c}\P_{\nu_{\mathscr{C}, +}} \bigl(C_2^c \bigr) \leq
\exp(-\Clconstddcptr z_{t_0}),
\end{equation}
for some strictly positive constant $\Clconstddcptr$.
Now define $\NNN'$ to be the number of particle paths that:
\begin{longlist}[(a)]
\item[(a)] start at an initial position in $[0, z_{t_0} ]$;
\item[(b)] hit $-y_{t_0}$ during the time-interval $[0,t_0]$;
\item[(c)] hit $0$ after having hit $-y_{t_0}$ and before time $2 t_0$.
\end{longlist}
For a particle starting in $[ 0, z_{t_0} ]$, the probability to hit
$-y_{t_0}$ during $[0,t_0]$
is larger than or equal to $q_{t_0}:= P_{z_{t_0}}  ( \inf_{s \in
[0,t_0] } \zeta_s \leq-y_{t_0}  )$. Moreover, using
the symmetry of the walk, we see that, starting from $-y_{t_0}$, the
probability for the walk to hit $0$ before time
$t_0$ is larger than or equal to $q_{t_0}$. As a consequence, given
$\NNN$, the distribution of $\NNN'$ stochastically
dominates a binomial distribution with parameters $\NNN$ and
$q_{t_0}^2$. Moreover, as $t$ goes to infinity, $z_{t_0}=o(t_0^{1/2})$
and $y_{t_0} z_{t_0} = o(t_0)$ due to the fact that
$v^2<2/3$, so that (\ref{e:tll}) is also valid for $P_{z_{t_0}}(\zeta
_{t_0} \leq-y_{t_0})$, from which we deduce that, for large $t$,
\[
q_{t_0} \geq\Clconstddbke  \varepsilon(t_0),
\]
where $\Clconstddbke $ is a strictly positive constant.
Define $C_3$ to be the event that $\NNN' \geq\NNN q_{t_0}^2/2$.
Using standard (see, e.g., \cite{McD}) large deviations bounds for
binomial random
variables, we deduce from the preceding discussion that for all large
enough $t$,
%e35 #&#
%
\begin{equation}
\label{e:borne-C3c}\P_{\nu_{\mathscr{C}, +}} \bigl(C_2 \cap C_3^c
\bigr) \leq\exp \bigl(- \Clconstddcppe \varepsilon(t_0)^{-1}
\bigr),
\end{equation}
for some strictly positive constant $\Clconstddcppe$.
Now consider the intervals of the form $[2t_0+k, 2t_0+k+1]$, for $0
\leq k \leq\lfloor t_0 \rfloor $. Then
consider a random walk satisfying conditions (a) to (c) above, stopped
at the first time it hits the origin
after having hitted $-y_{t_0}$; by definition, this time is $\leq
2t_0$. By symmetry, the probability that this walk
is above $0$ at time $2t_0+k$ is $\geq1/2$, and the probability that
it then remains above $0$ during the whole interval
$[2t_0+k, 2t_0+k+1]$ is larger than some
strictly positive constant $\Clconstddcppps$. As a consequence, for
each of the intervals we consider, the probability
that none of the random walks that satisfy (a) to (c) lies above zero
for the duration of the interval is conditional
upon $\NNN'$, bounded above by $(1-\Clconstddcppps)^{\NNN'}$. Now define
$C_4$ as the event that, for every $s \in[2t, 3t]$, there
exists at least one random walk satisfying (a) to (c) whose position at
time $s$ is $\geq0$.
Using the union bound over all the intervals, whose total number is
$\leq t_0+1$, we obtain that for all large enough $t$,
%e36 #&#
%
\begin{equation}
\label{e:borne-C4c}\P_{\nu_{\mathscr{C}, +}} \bigl(C_2 \cap C_3
\cap C_4^c \bigr) \leq(t_0+1) \exp \bigl(-
\deptyni  \varepsilon (t_0)^{-1} \bigr).
\end{equation}

We now observe that, on $C_1 \cap C_2 \cap C_3 \cap C_4$, one must have
$r_s \geq0$ for all $s \in[2t_0, 3t_0] = [(2/3)t, t]$.
Indeed, we know that the front always lies above the maximum position
of the particles initially at zero.
By $C_1$, the front lies above $-y_{t_0}$ during the interval
$[0,t_0]$. As a consequence, any particle path satisfying
(a)~and~(b) must hit the front before time $t_0$. For that reason, on
$C_4$, the front lies above $0$ during the interval
$[2t_0, 3t_0]$.
Now using (\ref{e:borne-C1c}), (\ref{e:borne-C2c}), (\ref
{e:borne-C3c}), (\ref{e:borne-C4c}), we
have that, for large enough $t$, the probability of the complement of
$C_1 \cap C_2 \cap C_3 \cap C_4$ is bounded above by
\[
\Clconstddbv^{\mathscr{C}} \varepsilon(t_0)^{\mathscr{C}} + \exp(-
\Clconstddcptr z_{t_0})+ \exp \bigl(- \Clconstddcppe \varepsilon(t_0)^{-1}
\bigr) + (t_0+1) \exp \bigl(-\deptyni  \varepsilon(t_0)^{-1}
\bigr),
\]
and the first term dominates the others when $t_0$ is large.
%\resetconstant[constd]

%s4.5.4 #&#
\subsubsection{Conclusion}

We now put together the different pieces leading to the proof of
Proposition~\ref{p:ballistique-dessous}. Remember from Step~2 the
notation $t_1 = (2/3)t$ and $r^{(1)}_s = \hat{r}_{s-t_1} \circ\pi
_{0, t_1}$,
and let $C:= \{ r_s \geq0$ for all $s \in[t_1, t ] \}$.
Our first claim is that
%e37 #&#
%
\begin{equation}
\label{e:comp-reste-positif} \mbox{on the event $C$, one has that $r^{(1)}_s
\leq r_s$\qquad for all $s \in[t_1, t ]$.}
\end{equation}
Indeed, on $C$, one has that $r^{(1)}_{t_1} \leq r_{t_1}$ since by
definition $r^{(1)}_{t_1} = 0$. We argue as in the proof of (\ref
{e:monotonie}), and assume that $s_0 \in[t_1, t ]$ is such that
$r^{(1)}_{s_0} = r_{s_0}$. Since, on $C$, the jumps that affect both
fronts between time $s_0$ and time $t$ are exactly the same, one must
have that $r^{(1)}_{s} = r_{s}$ for all $s \in[s_0, t ]$. This proves
the claim.

Now by (\ref{e:comp-reste-positif}), we have that, on $C$, $r^{(1)}_t
\leq r_t$,
so that
$\P_{\nu_{\mathscr{C}, +}}(r_t \leq\beta t )$ is bounded above by
$\P
_{\nu_{\mathscr{C}, +}}(r^{(1)}_t \leq\beta t ) + \P_{\nu
_{\mathscr
{C}, +}}(C^c)$.
Thanks to (\ref{e:au-dessus-de-zero}), we have that $\P_{\nu
_{\mathscr
{C}, +}}  ( C^c  ) \leq a_1 t^{-a_2 \cdot\mathscr{C}}$.
On the other hand, by (\ref{e:compare-final}),
the distribution of $r^{(1)}_t$ with respect to $\P_{\nu_{\mathscr{C},
+}}$ stochastically dominates that of $r_{t_0}$ with respect to
$\P_{\nu_3}$. Using Proposition~\ref{p:ballistique-dessous-KS} with
$K:=\mathscr{C}$, and the fact that $\beta$ is chosen such that
$\beta< (1/3)C_2(\rho/4)$, we have that
\[
\P_{\nu_{\mathscr{C}, +}} \bigl(r^{(1)}_t \leq\beta t \bigr) \leq
\P_{\nu
_3}(r_{t_0} \leq\beta t) \leq\P_{\nu_3}
\bigl(r_{t_0} \leq C_2(\rho/4) t_0 \bigr) \leq
\Clconstcccnulis t_0^{-\mathscr{C}}.
\]
We have thus proved a bound of the desired form for $\P_{\nu
_{\mathscr
{C}, +}}(r_t \leq\beta t )$. Going from such a bound to a similar one
for $\P_{\nu_{\mathscr{C},+}}( \exists s \geq t;   r_s \leq\alpha s
)$ is easy, and we omit the details (see, e.g., the proof of Corollary
\ref{c:prob-super-alpha} for related ideas).

%s4.5.5 #&#
\subsubsection{Proof of Corollary \protect\ref{c:prob-super-alpha}}

We now prove Corollary~\ref{c:prob-super-alpha}.
Let $G$ denote the event that $t=0$ is a backward super-$\alpha$ time,
and observe that $\P_{\nu_{\mathscr{C},+}}(G)>0$,
using Lemma~\ref{l:controle-proba-atteinte}
and the symmetry of the distribution of our random walks. For $n \geq
1$, define $A_{n,1}:= \{ r_n \geq\beta n \}$ and
let $A_{n,2}$ denote the event that the particle at the front at time
$n$ with the smallest label
remains above level $\alpha(n+1)$ during the time-interval $[n, n+1]$.
For $k \geq1$, introduce the event
$A^{(k)}:= \{ \mbox{$r_t \geq\alpha t $ for all $t \geq k$} \}$,
and note that
$ \bigcap_{n \geq k} (A_{n,1} \cap A_{n,2}) \subset A^{(k)}$.
By Proposition~\ref{p:ballistique-dessous}, one has that
$\P_{\nu_{\mathscr{C}, +}}(A_{n,1}^c) \leq\Clconstcccvienas n^{-\Clconstcccdu
\cdot
\mathscr{C}}$.
Then, using, for example, a variance bound for the random walk, one has that,
for large enough $n$,
$\P_{\nu_{\mathscr{C}, +}}(A_{n,1} \cap A_{n,2}^c) \leq\Clconstddcvienas
n^{-2}$, for some constant $\Clconstddcvienas>0$.
As a consequence, we have that, for all large enough $\mathscr{C}$,
there exists $k \geq1$ such that
$\sum_{n \geq k} \P_{\nu_{\mathscr{C}, +} } ( A_{n,1}^c \cup A_{n,2}^c
) < \P_{\nu_{\mathscr{C}, +} }(G)$.
We thus have that
$ \P_{\nu_{\mathscr{C}, +} }  (G \cap\bigcap_{n \geq k} (A_{n,1}
\cap A_{n,2})  ) > 0$,
whence the fact that $ \P_{\nu_{\mathscr{C}, +} } ( G \cap A^{(k)} )
> 0$.
Let $U_0$ denote the largest label of a particle path $(W,u)$ such that
$W_0=0$ (if there is no such particle path, we set $U_0:=0$).
We deduce from the fact that $ \P_{\nu_{\mathscr{C}, +} } (G \cap
A^{(k)} ) > 0 $ the existence of a $u_0<1$ such that
%e38 #&#
%
\begin{equation}
\label{e:grimpe-tronque} \P_{\nu_{\mathscr{C}, +} } \bigl(G \cap A^{(k)} \cap\{
U_0 \leq u_0 \} \bigr) > 0.
\end{equation}
Now let $\Psi_0$ denote the subset of $\Psi$ obtained by removing all
particle paths $(W,u)$ such that
$W_0=0$ and $u > u_0$. We deduce from (\ref{e:grimpe-tronque}) that
\[
\P_{\nu_{+} } \bigl(G(\Psi_0) \cap A^{(k)}(
\Psi_0) \cap \bigl\{ \bigl\llvert X_0(\Psi
_0) \bigr\rrvert \geq\mathscr{C} \bigr\} \bigr) > 0,
\]
with the convention that, for $D \in\F$, $D(\Psi_0)$ denotes the event
that $\un_D(\Psi_0)=1$.
Now introduce the event $A'$ that:
\begin{itemize}
\item there exists a particle path $(W,u)$ such that $u > u_0$ and
$W_s=0$ for $s \in[0,\alpha^{-1}]$, and another particle path $(W,u)$
such that $u > u_0$,
$W_s \geq\lfloor\alpha s \rfloor $ for all $s \in[0,k]$;
\item every particle path $(W,u)$ such that $W_0=0$ and $u>u_0$
satisfies $W_s > \alpha s$ for $s < 0$.
\end{itemize}
One clearly has that $ \P_{\nu_{+} }(A')>0$, and that the two events
$A'$ and $G (\Psi_0) \cap A^{(k)}(\Psi_0) \cap\{ \llvert  X_0(\Psi_0)\rrvert   \geq
\mathscr{C} \}$ are independent with respect to $ \P_{\nu_{+} }$, and
[using (\ref{e:monotonie})], that $A^{(k)}(\Psi_0) \cap A' $ implies
that $0$ is a forward super-$\alpha$time for $B_0$.
%\resetconstant[constd]

%s4.6 #&#
\subsection{Conditional distribution of \texorpdfstring{$\pi_{r_{S_n},S_n}(B_{S_n})$}{pi{r{Sn},Sn}(B{Sn})}}\label{ss:cond-dist-Sn}

We now give an analogue of Proposition~\ref{p:conditionnement-1}
describing the conditional distribution of $\pi_{r_{S_n},
S_n}(B_{S_n})$ given
$ \G^R_{S_n}$. The reason why the result is not exactly the same as in
Proposition~\ref{p:conditionnement-1} is that the definition of $S_n$
involves additional conditions on trajectories in $B_{S_n}$, beyond
those saying that the trajectory of a particle that is blue at time
$S_n$ has to avoid the front between time $0$ and $S_n$.

Indeed, one first (mild) condition is that the number of particles
located at site $r_{S_n}$ at time $S_n$ has to be $\geq\mathscr{C}$.
More importantly, for each $1 \leq k < n$, we record whether $S_k$ is
or is not a backward super-$\alpha$ time, and this leads to the
following two types of conditions on $B_{S_n}$:
\begin{longlist}[(a)]
\item[(a)] when $S_k$ is a backward super-$\alpha$ time, all the
trajectories in $B_{S_n}$ have to lie above a space--time half-line of
slope $\alpha$ extending from $(r_{S_k},S_k)$ in the past direction;
\item[(b)] when $S_k$ is not a backward super-$\alpha$ time, all the
trajectories $(W,u)$ in $B_{S_n}$ such that $(W_{S_k},u)$ is less than
$(W^{(k)}_{S_k}, u^{(k)})$ with respect to the lexicographical order,
have to lie above a space--time half-line of slope $\alpha$ extending
from $(r_{S_k},S_k)$ in the past direction, where $(W^{(k)}, u^{(k)})$
denotes the witness trajectory contained in~$\Upsilon_k$.
\end{longlist}
Note that we have defined $S_n$ in such a way that the witness
trajectories contained in those $\Upsilon_{k}$ for which $1 \leq k
\leq
n-1$ and $S_k$ is not a backward super-$\alpha$ time, are included in
$R_{S_n}$, so that the above conditions can indeed be expressed using
only the information available in $\G^R_{S_n}$. Moreover, one checks
that, provided that $S_n$ is a backward super- and sub-$\alpha$ time,
the additional conditions (a) and (b) are automatically satisfied.
Since $S_n$ is by definition always a backward sub-$\alpha$ time,
conditions (a) and (b) will in fact be satisfied as soon as $S_n$ is a
backward super-$\alpha$ time.

Adapting the arguments leading to Proposition~\ref
{p:conditionnement-1}, we thus obtain the following result.
%pr10 #&#
%
\begin{prop}\label{p:conditionnement-S}
For any bounded measurable map $F:  \Omega\to\R$, and all $n
\geq
1$, one has that, on $\{ S_n < +\infty\}$,
\[
\E_{\nu} \bigl(F \bigl(\pi_{r_{S_n}, S_n}(B_{S_n}) \bigr)
\mid \G^R_{S_n} \bigr) = \eta \bigl(F, Q^{(n)}
\bigr) \qquad\mbox{a.s.},
\]
where $Q^{(n)}$ is a $\G^R_{S_n}$-measurable random variable, and
where
%e39 #&#
%
\begin{equation}
\label{e:def-eta}\eta(F, \mathfrak{q}):= \E_{\nu
} \bigl(F(B_0)
\mid \mathfrak{G}( \mathfrak{q}) \cap\{ \Xi_0 = 1 \} \bigr),
\end{equation}
where
$\mathfrak{G}(\mathfrak{q})$ is an event that serves to encode
conditions \textup{(a)} and \textup{(b)} discussed above on $\pi_{r_{S_n},
S_n}(B_{S_n})$, and has the property that
%e40 #&#
%
\begin{equation}
\label{e:bouc} G \subset\mathfrak{G}(\mathfrak{q}),
\end{equation}
where $G:=\{\mbox{$t=0$ is a backward super-$\alpha$ time} \}$.
\end{prop}

Although exact, the description of the distribution of $\pi_{r_{S_n},
S_n}(B_{S_n})$ given by Proposition~\ref{p:conditionnement-S} may not
be very handy to prove explicit bounds. Fortunately, we have observed
that the complicated explicit conditions on $B_{S_n}$ are automatically
satisfied if one assumes that $S_n$ is also a backward super-$\alpha$
time. As a result, the following comparison between the (conditional)
distribution of $\pi_{r_{S_n}, S_n}(B_{S_n})$ and the much simpler
distribution $\P_{\nu_{\mathscr{C}, +}}$, can be derived from
Proposition~\ref{p:conditionnement-S}.

%co2 #&#
%
\begin{coroll}\label{c:comparaison-conditionnement}
For any nonnegative bounded measurable map $F:  \Omega\to\R$,
the following bound holds for all $n \geq1$, on $\{ S_n < +\infty\}$:
\[
\E_{\nu} \bigl(F \bigl(\pi_{r_{S_n}, S_n}(B_{S_n}) \bigr)
\mid \G^R_{S_n} \bigr) \leq \Cdev
\E_{\nu_{\mathscr{C}, +}}( F ) \qquad\mbox{a.s.},
\]
where $\Cdev$ is a positive constant depending on $\mathscr{C}$.
\end{coroll}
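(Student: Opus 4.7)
The plan is to apply Proposition \ref{p:conditionnement-S}, which reduces the corollary to exhibiting a constant $\Cr{cc4} = \Cr{cc4}(\mathscr{C})$ such that $\eta(F, \mathfrak{q}, q, x) \leq \Cr{cc4}\, \E_{\nu_{\mathscr{C},+}}(F)$ uniformly over every triple $(\mathfrak{q}, q, x)$ arising as a realization of $(Q^{(n)}, (r_{s+S_n} - r_{S_n})_{-S_n \leq s \leq 0}, -r_{S_n})$ on $\{S_n < +\infty\}$. Writing
\[
\eta(F, \mathfrak{q}, q, x) = \frac{\E_\nu(F(B_0)\,\un_{G(\mathfrak{q}, q, x)}\,\un_{\Xi_0 = 1})}{\P_\nu(G(\mathfrak{q}, q, x) \cap \{\Xi_0 = 1\})},
\]
I will bound the numerator from above and the denominator from below separately.

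The numerator bound is elementary: since $F \geq 0$ and $\un_{G(\mathfrak{q}, q, x)} \leq 1$, the numerator is at most $\E_\nu(F(B_0)\un_{\Xi_0 = 1})$. The key observation is that, under $\P_\nu$, the random element $B_0 \in \Omega$ has the same distribution as $\Psi$ under $\P_{\nu_+}$, and $\{\Xi_0 = 1\}$ is equivalent to the event of having at least $\mathscr{C}$ particles at the origin at time zero; consequently $\E_\nu(F(B_0)\un_{\Xi_0 = 1}) = \P_\nu(\Xi_0 = 1) \cdot \E_{\nu_{\mathscr{C},+}}(F)$. The proof thus reduces to establishing the existence of $c_0 = c_0(\mathscr{C}) > 0$ such that
\[
\P_\nu(G(\mathfrak{q}, q, x) \cap \{\Xi_0 = 1\}) \geq c_0 \cdot \P_\nu(\Xi_0 = 1)
\]
uniformly over admissible triples, in which case one may take $\Cr{cc4} = c_0^{-1}$.

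To obtain this denominator bound, I would replace the complicated event $G(\mathfrak{q}, q, x) = \mathfrak{G}(\mathfrak{q}, q) \cap G(q, x)$ by the simpler but stronger event
\[
A := \bigl\{ \forall (W, u) \in B_0,\ W_s > \alpha s \text{ for all } s < 0 \bigr\}.
\]
The crucial claim is that $A \subseteq G(\mathfrak{q}, q, x)$ for every admissible triple: this uses the backward sub-$\alpha$ character of $S_n$, which gives $q_s < \alpha s$ on $[-S_n, 0)$ and $x = -r_{S_n} < -\alpha S_n$, together with the backward sub-$\alpha$ character of each $S_j$ for $j < n$, which gives $q_{t_i} < \alpha t_i$ and $q_{s_i} < \alpha s_i$ and forces every slope-$\alpha$ line appearing in $\mathfrak{G}(\mathfrak{q}, q)$ to lie strictly below the reference line $s \mapsto \alpha s$ on the relevant past interval. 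Once this inclusion is known, the same distributional identification as above yields $\P_\nu(A \cap \{\Xi_0 = 1\}) = \P_\nu(\Xi_0 = 1) \cdot \P_{\nu_{\mathscr{C},+}}(A)$. Finally, to bound $\P_{\nu_{\mathscr{C},+}}(A)$ from below I would exploit the reversibility of the two-sided random walk construction of Section \ref{s:formal-construction}: for a particle at initial position $y \geq 0$, the past trajectory $(W_s)_{s \leq 0}$ becomes, via $\zeta_u := -W_{-u}$, a forward random walk starting at $-y \leq 0$, and the condition ``$W_s > \alpha s$ for all $s < 0$'' translates into ``$\zeta_u < \alpha u$ for all $u > 0$''. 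This is exactly the event controlled by Lemma \ref{l:controle-proba-atteinte} applied to the configuration obtained by reflecting $w$ through the origin, yielding $\P_w(A) \geq g(K)$ whenever $\sum_{y \geq 0} |w(y)| e^{-\theta y} \leq K$. Since this exponential sum has finite $\nu_{\mathscr{C},+}$-expectation bounded in terms of $\mathscr{C}$, choosing $K$ large enough gives $\P_{\nu_{\mathscr{C},+}}(\sum_{y \geq 0} |w(y)| e^{-\theta y} \leq K) \geq 1/2$, whence $\P_{\nu_{\mathscr{C},+}}(A) \geq g(K)/2 > 0$.

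The main obstacle is the verification of the inclusion $A \subseteq G(\mathfrak{q}, q, x)$. Conceptually this follows from the backward sub-$\alpha$ property of each $S_j$ (which forces the anchor points $(t_i, q_{t_i})$ and $(s_i, q_{s_i})$ to lie below the line $s \mapsto \alpha s$), but in practice one has to go through each of the possibly numerous constraints making up $G(q, x)$ and $\mathfrak{G}(\mathfrak{q}, q)$ and check that its associated slope-$\alpha$ line is dominated by the reference line $s \mapsto \alpha s$ on its relevant domain; this bookkeeping is where the bulk of the work of the proof resides.
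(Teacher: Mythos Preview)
Your proposal is correct and follows essentially the same route as the paper: apply Proposition~\ref{p:conditionnement-S}, bound the numerator by dropping the indicator of $G(\mathfrak{q},q,x)$, and bound the denominator from below by replacing $G(\mathfrak{q},q,x)$ by the fixed event $G = \{t=0$ is a backward super-$\alpha$ time$\}$ (your event $A$), whose positive probability comes from Lemma~\ref{l:controle-proba-atteinte}. The only remark is that you overestimate the difficulty of the inclusion $G \subset G(\mathfrak{q},q,x)$: since $S_n$ is backward sub-$\alpha$ one has $q_s < \alpha s$ on $[-S_n,0)$ and $x < \alpha(-S_n)$, and then for any condition of the form $W_s \geq q_{t_i} - \alpha(t_i-s)$ one simply notes $q_{t_i} - \alpha(t_i-s) < \alpha t_i - \alpha(t_i-s) = \alpha s \leq W_s$; the paper dispatches this in a single sentence rather than treating it as the bulk of the work.
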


\begin{pf}
From (\ref{e:def-eta}) and (\ref{e:bouc}), we have that, for
nonnegative $F$,
\[
\eta(F, \mathfrak{q}) \leq\frac{\E_{\nu}(F(B_0) \un_{\Xi_0=1})
}{\P
_{\nu}( G \cap\{ \Xi_0 =1\} )} \leq\Cdev \E_{\nu_{\mathscr{C},+}}(F),
\]
with
\[
\Cdev:= \frac{ \P_{\nu}( \Xi_0 =1) }{ \P_{\nu}( G \cap\{
\Xi_0
=1\} ) },
\]
using Lemma~\ref{l:controle-proba-atteinte} to establish that $ \P
_{\nu
}( G \cap\{ \Xi_0 =1\} ) > 0$.
The conclusion now follows from Proposition~\ref{p:conditionnement-S}.
\end{pf}

%s4.7 #&#
\subsection{Tail bounds}\label{ss:tails}

We are now ready to prove the tail bounds that are necessary to control
the regeneration times. Let us describe how the proof is organized. On
the whole, the tools at our disposal are the following:
\begin{itemize}
\item comparison of the distribution of $\pi_{r_{S_n}, S_n}(B_{S_n})$
with $\nu_{\mathscr{C},+}$,
\item bounds on the ballistic behavior of the front with respect to $\P
_{\nu_{\mathscr{C},+}}$,
\item bounds on the hitting time/probability of a straight line of
slope $\alpha$ by a system of random walks, controlled by $\phi
_{\theta}$.
\end{itemize}

A first step, using these tools, consists in proving tail bounds for
the random variables $S_{n+1}-S_n$ and $r_{S_{n+1}}-r_{S_n}$ on the
event that $D_n<+\infty$, conditional upon~$\F^R_{S_n}$. Basically, the
idea is that, due to the ballisticity of the front, the failure of
$S_n$ to be an $\alpha$-separation time can be detected by looking at
trajectories ``around'' $(S_n,r_{S_n})$, while, for the same reason, the
post-$D_n$ conditions that characterize $S'_{n+1}$ and $S_{n+1}$ have
to be satisfied within a ``short'' interval of time. The bounds are
stated in Proposition~\ref{p:tbcof}, where the time intervals
$[S_n,S'_n]$ and $[S'_n,S_{n+1}]$ are dealt with separately.

Apart from well-controlled deterministic quantities, these bounds
involve the random quantity
$\MM_n$, defined for all $n \geq1$, on the event $\{ S_n < +\infty\}
$, by \index{MMn@{$\MM_n$}}
%e41 #&#
%
\begin{equation}
\label{e:def-M} \MM_n:= \sum_{(W,u) \in R^*_{S_n} }
e^{-\theta(r_{S_n} - W_{S_n})},
\end{equation}
where $R^*_{S_n}$ is defined in Section~\ref{ss:def-SD}. Broadly
speaking, $\MM_n$ measures the accumulation of particles below the
front position $r_{S_n}$, and the appearance of such a quantity in the
bounds comes from the necessity to control the forward sub-$\alpha$
time property at time $S_n$, which is done with the help of the
function $\phi_{\theta}$ applied to the particles whose positions are
$\leq r_{S_n}$ at time $S_n$.

To get rid of this random term and obtain deterministic tail bounds, we
need to control the evolution of $\MM_n$. Thus, the second step in the
proof consists in establishing an affine induction inequality
(Proposition~\ref{p:borne-esperance}) for the conditional expectation
of $\MM_n$, whose coefficient can be made $<1$ for a suitable choice of
the parameter $L$. The third, easier step, consists in proving tail
bounds in the case $n=1$ (i.e., for $S_1$, $\MM_1$, etc.), stated in
Proposition~\ref{p:bornes-initiales}. With this step completed, the
previous results can be combined to prove a uniform bound (Proposition
\ref{p:bornitude}) on the conditional expectation of $\MM_n$ given that
$D_{n-1}<+\infty$, which is the missing ingredient needed to obtain the
suitable deterministic tail bounds on the renewal structure that imply
finiteness of the second moments.

Before we enter the various steps of the proof, let us first note that,
thanks to Proposition~\ref{p:ballistique-dessous} and to the fact that
$\alpha< \beta$, we know that, for $n \geq0$, when $D_n < +\infty$,
one almost surely has that $S_{n+1} < +\infty$.

%s4.7.1 #&#
\subsubsection{Step 1: Tail bounds conditional on $\F^R_{S_n}$}

The following proposition lists the various bounds we have.

%pr11 #&#
%
\begin{prop}\label{p:tbcof}
For all $n \geq1$, for all $t>0$ and $K > 0$, following bounds hold on
$\{ S_n < +\infty\}$:
\begin{eqnarray}
&&\P_{\nu} \bigl( S'_{n+1} -
S_n \geq t, D_{n}<+\infty\mid \F^R_{S_n}
\bigr)
\nonumber\\[-8pt]\label{e:queue-S'-S}  \\[-8pt]\nonumber
&&\qquad  \leq e^{\theta} \MM_{n} e^{-\Clconstccseptyn t} +
c_{11} t^{- c_{12} \mathscr{C}} \qquad\mbox{a.s.},
\\
&&\P_{\nu} \bigl( r_{S'_{n+1}} - r_{S_n}
\geq K, D_{n}<+\infty\mid \F^R_{S_n} \bigr)
\nonumber\\[-8pt]\label{e:queue-r-S'-S}  \\[-8pt]\nonumber
&&\qquad \leq \MM_{n} e^{-c_{13} K} + c_{14} K^{- c_{15} \mathscr{C}} \qquad\mbox{a.s.},
\\
\label{e:queue-S-S'}
&&\P_{\nu} \bigl( S_{n+1} -
S'_{n+1} \geq t, D_{n}<+\infty\mid
\F^R_{S'_{n+1}} \bigr) \leq c_{16} t^{- c_{17} \mathscr{C}} \qquad\mbox{a.s.},
\\
\label{e:queue-r-S-S'}
&&\P_{\nu} \bigl( r_{S_{n+1}} - r_{S'_{n+1}}
\geq K, D_{n}<+\infty\mid \F^R_{S'_{n+1}} \bigr)
\leq c_{18} K^{-
c_{19} \mathscr{C}} \qquad\mbox{a.s.},
\end{eqnarray}
where $\Clconstccseptyn,\ldots,c_{19}$ are strictly positive constants, with
$c_{11}, c_{14}$ depending on $\mathscr{C}$, and $c_{16},c_{18}$ depending on $\mathscr{C}$ and $L$.
\end{prop}
The proof of Proposition~\ref{p:tbcof} relies on controlling the
numbers of $\alpha$-crossings in the relevant time intervals, so for $n
\geq1$, let $\NN_n$ and $\NN'_n$, respectively, denote the number of
$(S_n, \alpha)$-crossings contained in the time-interval $[S_n,
S'_{n+1}]$, and the number of $(S'_{n+1}, \alpha)$-crossing times
contained in the time-interval $[S'_{n+1}, S_{n+1}]$. The key estimates
on these variables are given in the following lemma.
%le3 #&#
%
\begin{lemma}\label{l:controle-SS'}
One has the following bounds: for all $n \geq1$, for all $K > 0$, on
$\{ S_n < +\infty\}$,
\begin{eqnarray}
\label{e:borne-NN} \qquad\P_{\nu} \bigl( \NN_n \geq K,
D_{n}<+\infty\mid \F^R_{S_n} \bigr) &\leq&
e^{\theta} \MM_{n} e^{-c_{20} K} + c_{21}
K^{- c_{22} \mathscr{C}} \qquad\mbox{a.s.},
\\
\label{e:borne-NN'} \P_{\nu} \bigl( \NN'_n \geq
K, D_{n}<+\infty\mid \F^R_{S'_n} \bigr) &\leq&
c_{23} K^{- c_{24} \mathscr{C}} \qquad\mbox{a.s.},
\end{eqnarray}
where
$c_{20},\ldots,c_{24}$ are strictly positive constants, with
$c_{21}$ depending on $\mathscr{C}$ and $c_{21}$ depending on
$\mathscr{C}$ and $L$.
\end{lemma}
\begin{pf*}{Proof of Lemma~\ref{l:controle-SS'}}
Let $S''_{n+1}$ denote the infimum of the $t > D_{n}$ such that:
\begin{itemize}
\item$t$ is a backward sub-$\alpha$ time;
\item$\Upsilon_{n} \subset R_t$.
\end{itemize}

Let $\NN_n^{(1)}$ and $\NN_n^{(2)}$ denote, respectively, the numbers of
$(S_n, \alpha)$-crossings contained in the time-interval $[S_n,
S''_{n+1}[$ and in the time-interval
$[S''_{n+1}, S'_{n+1}]$, so that
%e42 #&#
%
\begin{equation}
\label{e:addition-N}\NN_n = \NN_n^{(1)} +
\NN_n^{(2)}.
\end{equation}
Our first claim is that there exists a constant $d_1<1$,
depending on $\mathscr{C}$, such that, for all $\ell\geq1$,
%e43 #&#
%
\begin{equation}
\label{e:queue-plus-de-C} \P_{\nu} \bigl( \NN_n^{(2)} \geq
\ell, D_{n}<+\infty\mid \G^R_{S_n} \bigr) \leq
d_1^{\ell} \qquad\mbox{a.s.}
\end{equation}
Assume that $D_n < +\infty$, and
denote by $\tau_1, \tau_2,\ldots$ the successive backward
sub-$\alpha$
times posterior to $S''_{n+1}$ (with $\tau_1:= S''_{n+1}$), and let
$J:= \inf\{ j \geq1;   \Xi_{\tau_j} = 1 \}$ (remember that $\Xi
_t=1$ means that there are at least $\mathscr{C}$ particles located at
site $r_t$ in $B_t$). By definition, we have $S'_{n+1} = \tau_J$.
Since $S_n$ is a backward sub-$\alpha$ time, any $(S_n,\alpha
)$-crossing in $[S''_{n+1}, S'_{n+1}]$ is a backward sub-$\alpha$ time,
so we have%e44 #&#
\begin{equation}
\label{e:compte-J}\NN_n^{(2)} \leq J.
\end{equation}
Now using an argument similar to the one leading to Proposition~\ref
{p:conditionnement-S}, we have that, for all $i \geq1$, on $\{ D_n <
+\infty\}$, the distribution of $\pi_{r_{\tau_i}, \tau_i}(B_{\tau
_i})$ conditional upon $\G^R_{\tau_i}$ is that of $B_0$ conditioned
upon an event containing $G$, so that, on $\{ D_n < +\infty\}$, one
has the bound
\[
\P_{\nu} \bigl( \Xi_{\tau_i} = 1 \mid \G^R_{\tau_i}
\bigr) \geq\P_{\nu} \bigl(\{ \Xi_0 = 1 \} \cap G \bigr)
\qquad\mbox{a.s.}
\]
Since for all $i \geq2$, the random variables $\Xi_{\tau_1},\ldots,
\Xi
_{\tau_{i-1}}$ are measurable with respect to
$\G^R_{\tau_i}$, we deduce that, on $\{ D_n < +\infty\}$,
%e45 #&#
%
\begin{equation}
\label{e:geom}\P_{\nu} \bigl( J \geq\ell\mid \G^R_{S''_{n+1}}
\bigr) \leq \bigl(1- \P_{\nu} \bigl(\{ \Xi_0 = 1 \} \cap G
\bigr) \bigr)^{\ell} \qquad\mbox{a.s.}
\end{equation}
Combining (\ref{e:compte-J}) and (\ref{e:geom}), we deduce (\ref
{e:queue-plus-de-C}), using also the fact that\footnote{Note that this
property is not obvious. It is a consequence that it is enough to look
at trajectories in $R_{S_n}$ to check the backward super-$\alpha$ time
property for $S_{j}$ where $j \leq n-1$. See the discussion before
Proposition~\ref{p:conditionnement-S}.} $\G^R_{S_n} \subset\G
^R_{S''_{n+1}}$ since $S_n \leq S''_{n+1}$ and
$S_n$ is $\G^R_{S''_{n+1}}$-measurable.

Now consider the event $\NN_n^{(1)} > \ell$. Start with the case where
$S_n$ is not a backward super-$\alpha$ time, and call \index
{Hn@{$H_n$}} $H_n$ the corresponding event. We first bound the
probability that
$W_{S_n}^{(n)} > r_{S_n} + \ell/2$. From Lemma~\ref{l:atteinte-droite},
a random walk starting at $x \geq0$ at time zero has a probability
bounded above by $e^{-\theta x}$ to ever cross at a negative time the
half-line of slope $\alpha$ starting at $(0,0)$.
Using Corollary~\ref{c:comparaison-conditionnement} and the union bound
over all the particle paths in $B_{S_n}$, we deduce that
\[
\P_{\nu} \bigl( H_n, W_{S_n}^{(n)} >
r_{S_n} + \ell/2 \mid \G^R_{S_n} \bigr) \leq \Cdev \sum_{x > \ell/2} e^{-\theta x} \rho.
\]
We deduce that
%e46 #&#
%
\begin{equation}
\label{e:H-un-bout} \P_{\nu} \bigl( H_n, W_{S_n}^{(n)}
> r_{S_n} + \ell/2 \mid \G^R_{S_n} \bigr) \leq
d_2 e^{-d_3
\ell},
\end{equation}
where $d_2$ and $d_3$ are strictly positive constants, with
$d_2,d_3$ depending on $\mathscr{C}$.
On the other hand, assume that $W_{S_n}^{(n)} \leq r_{S_n} + \ell/2$.
Assume also that $\NN_n^{(1)} > \ell$, and let $t$ denote the time of
the $\ell$th $(S_n,\alpha)$-crossing posterior to $S_n$. By definition
of $\NN_n^{(1)}$, we must have that $(W^{(n)}, u^{(n)}) \notin R_t$,
whence $W^{(n)}_t \geq r_t \geq r_{S_n}+\ell+\alpha(t-S_n)$. Since
$W_{S_n}^{(n)} \leq r_{S_n} + \ell/2$, this implies that $W^{(n)}_t
\geq W^{(n)}_{S_n} + \ell/2+\alpha(t-S_n)$. Using again Corollary
\ref
{c:comparaison-conditionnement}, Lemma~\ref{l:atteinte-droite} and the
union bound, we deduce that
\[
\P_{\nu} \bigl(\NN_n^{(1)} > \ell,
H_n, W_{S_n}^{(n)} \leq r_{S_n} + \ell
/2 \mid \G^R_{S_n} \bigr) \leq\Cdev \biggl(
e^{-\theta\ell/2} \rho_{\mathscr{C}} + \sum_{1 \leq x \leq\ell/2}
e^{-\theta\ell/2} \rho \biggr),
\]
where
$\rho_{\mathscr{C}} $ denotes the expected value of a Poisson random
variable of parameter $\rho$ conditioned upon being
$\geq\mathscr{C}$ (we have to use $\rho_{\mathscr{C}} $ since, under
$\nu_{\mathscr{C},+}$, the number of particles at the origin has a
Poisson distribution conditioned by taking a value $\geq\mathscr{C}$).
We deduce that there exists a strictly positive constant $d_4$ depending on $\mathscr{C}$ and a strictly positive constant
$d_5$ such that
%e47 #&#
%
\begin{equation}
\label{e:H-un-autre-bout} \P_{\nu} \bigl( \NN_n^{(1)} >
\ell, H_n, W_{S_n}^{(n)} \leq r_{S_n} +
\ell/2 \mid \G^R_{S_n} \bigr) \leq d_4 e^{-d_5 \ell}.
\end{equation}

Now consider the case where $S_n$ is a backward super-$\alpha$ time. In
this case, $\Upsilon= \varnothing$ and, by definition of $S''_{n+1}$,
$\NN_n^{(1)}$ is also the number of $(S_n, \alpha)$-crossings contained
in the time-interval $[S_n, D_n]$. Introduce $t_0:= \ell/C'_1(\rho)$
(remember that $C'_1(\rho)$ is defined in Proposition~\ref
{p:ballistique-dessus-remanent-KS}), assuming that $\ell$ is large
enough so that $t_0 > \alpha^{-1}$, and consider the cases
$D_n-S_n > t_0$ and $D_n - S_n \leq t_0$ separately. Assume first that
$D_n - S_n \leq t_0$, and let $t$ denote the time of the $\ell$th
$(S_n,\alpha)$-crossing posterior to~$S_n$. The fact that $\NN_n^{(1)} >
\ell$ implies that $t < D_n$,
while $r_{t} \geq r_{S_n} + \ell$. Moreover, since $t < D_n$, $r_{t}$
is in fact equal to $r_{S_n}+r_{t-S_n}(\pi_{r_{S_n}, S_n}(B_{S_n}))$
since, by definition, particles in $R_{S_n}$ cannot influence the front
between time $S_n$ and time $D_n$. As a consequence, $r_{t-S_n}(\pi
_{r_{S_n}, S_n}(B_{S_n})) \geq\ell$, while $t-S_n \leq t_0$. Using
Corollary~\ref{c:comparaison-conditionnement} and Proposition~\ref
{p:ballistique-dessus-KS-extension}, we deduce that
%e48 #&#
%
\begin{equation}
\label{e:Hc-un-bout} \P_{\nu} \bigl( \NN_n^{(1)} \geq
\ell, H_n^c, D_n-S_n \leq
t_0 \mid \G^R_{S_n} \bigr) \leq\Clconstcccseptyni
e^{-\Clconstcccdesimt  t_0}.
\end{equation}

On the other hand, using again the fact that particles in $R_{S_n}$
cannot influence the front prior between time $S_n$ and time $D_n$, we
see that, if $D_n - S_n > t_0$, at least one of the three following
events must occur, according to which of the five conditions defining
$D_n$ corresponds to the smallest time (note that our assumption that
$t_0 > \alpha^{-1}$ rules out $(2)$ and $(4)$): for some $t \geq S_n +
t_0$, $r_{t}( \pi_{r_{S_n}, S_n}(B_{S_n}) ) < \lfloor\alpha
{(t-S_n)} \rfloor $, or
there exists a particle path $(W,u) \in R_{S_n}$ such that $W_{S_n}
\leq r_{S_n}-1$ and a $t \geq t_0$ such that
$W_{S_n+t} \geq r_{S_n}-1+ \alpha(t-S_n)$, or there exists a $t \geq
t_0$ such that
$W^{*n}_t > r_{S_n}-1+\alpha t$, while $W^{*n}_{S_n+\alpha^{-1}}=r_{S_n}$.
Using Corollary~\ref{c:comparaison-conditionnement}, Proposition~\ref
{p:ballistique-dessous}, Lemma~\ref{l:controle-proba-atteinte} and
Lemma~\ref{l:atteinte-droite}, and the strong Markov property\footnote
{The Markov property of the dynamics holds with respect to the
filtration $(\F_t)_{t \geq0}$, not $(\F^R_t)_{t \geq0}$. Here, we use
the fact that $\F^R_{S_n} \subset\F_{S_n}$, and also that $\F^R_{S_n}
\subset\G^R_{S_n}$.} at time $S_n$ and $S_n + \alpha^{-1}$,
we deduce by the union bound that
%e49 #&#
%
\begin{eqnarray}
\label{e:Hc-deuxieme-bout}
&& \P_{\nu} \bigl( \NN_n^{(1)} \geq
\ell, H_n^c, t_0 <D_n-S_n
< +\infty\mid \F ^R_{S_n} \bigr)
\nonumber\\[-8pt]\\[-8pt]\nonumber
&&\qquad  \leq\Cdev \Clconstcccvienas
t^{-\Clconstcccdu \cdot\mathscr{C}} + e^{\theta} \MM_{n} e^{-\mu t_0}
+ e^{-\mu(t_0 - \alpha^{-1})}.
\end{eqnarray}

Putting together (\ref{e:addition-N}), (\ref{e:queue-plus-de-C}),
(\ref{e:H-un-bout}), (\ref{e:H-un-autre-bout}), (\ref{e:Hc-un-bout}) and
(\ref{e:Hc-deuxieme-bout}), we deduce the first part of the lemma, that
is, the bound (\ref{e:borne-NN}).

To prove (\ref{e:borne-NN'}), we define $S^{\prime\prime\prime}_{n+1}$ as the infimum
of the $t>S'_{n+1}$ such that $t$ is a backward sub-$\alpha$ time and
$]S'_{n+1}, t[$ contains at least $L$ $(S'_{n+1},\alpha)$-crossing
times, and let $\tau'_1,\tau'_2,\ldots$ denote the successive backward
sub-$\alpha$ times posterior to $S^{\prime\prime\prime}_{n+1}$ (with $\tau'_1:=
S^{\prime\prime\prime}_{n+1})$, and let $I:= \inf\{ i \geq1,  \Xi_{\tau'_i} =1 \}
$. We have by definition that $\NN_n' \leq L + I$. Arguing exactly
as in the proof of (\ref{e:borne-NN}), we can prove a bound of the form
%e50 #&#
%
\begin{equation}
\label{e:queue-plus-de-C-2} \P_{\nu} \bigl( I \geq\ell, D_{n}<+\infty
\mid \G^R_{S'_{n+1}} \bigr) \leq d_6^{\ell}
\qquad\mbox{a.s.},
\end{equation}
where $d_6<1$ is a constant depending on $\mathscr{C}$, which
leads to the desired bound on the tail of $\NN_n'$.
%\resetconstant[constd]
\end{pf*}

We now give the proof of Proposition~\ref{p:tbcof}, which heavily
relies on Lemma~\ref{l:controle-SS'} we have just proved.

\begin{pf*}{Proof of Proposition~\ref{p:tbcof}}
We start with the proof of (\ref{e:queue-S'-S}). Assume that
$S'_{n+1}-S_n \geq t$. If $r_{S_n+t} \geq r_{S_n} + \beta t$, we deduce
that there exist at least $\lfloor\frac{(\beta-\alpha)t}{2}
\rfloor $ distinct
$(S_n,\alpha)$-crossing times in $[S_n,S_n+t]$, whence the fact that
$\NN_n \geq\lfloor\frac{(\beta-\alpha)t}{2} \rfloor $. On the
other hand,
using (\ref{e:monotonie}),
Proposition~\ref{p:ballistique-dessous} and Corollary~\ref
{c:comparaison-conditionnement}, we see that
\[
\P_{\nu} \bigl(r_{S_n+t} - r_{S_n} \leq\beta t,
D_{n}<+\infty\mid \F ^R_{S_n} \bigr) \leq
\Cdev \Clconstcccvienas t^{-\Clconstcccdu \cdot\mathscr{C}} \qquad\mbox{a.s.}
\]
The conclusion now follows from (\ref{e:borne-NN}).
To prove (\ref{e:queue-r-S'-S}), we note that, by definition of
$\alpha
$-crossing times, we have
\[
r_{S'_{n+1}} \leq r_{S_n}+ \alpha \bigl(S'_{n+1}-S_n
\bigr)+\NN_n+1.
\]
The result then follows from combining (\ref{e:queue-S'-S}) and (\ref{e:borne-NN}). The proof of (\ref{e:queue-S-S'}) is similar to
that of (\ref{e:queue-S'-S}), using (\ref{e:borne-NN'}) instead of
(\ref{e:borne-NN}),
Proposition~\ref{p:ballistique-dessous} and an analog of Corollary~\ref
{c:comparaison-conditionnement} for $S'_{n+1}$. The proof of (\ref
{e:queue-r-S-S'}) goes as the proof of (\ref{e:queue-r-S'-S}), building
on (\ref{e:queue-S-S'}) and~(\ref{e:borne-NN'}).
%\resetconstant[constd]
\end{pf*}

%s4.7.2 #&#
\subsubsection{Step 2: Evolution of $\MM_n$}

The main estimate we prove is the following affine induction inequality
for $\MM_n$.
%pr12 #&#
%
\begin{prop}\label{p:borne-esperance}
For all $n \geq1$, and all large enough $\mathscr{C}$, one has the
following bounds:
\[
\E_{\nu} \bigl( \MM_{n+1} \un(D_n < +\infty)
\mid \F^R_{S_n} \bigr) \leq c_{25}
e^{-\theta L} \MM_{n} + c_{26},
\]
where $ c_{25}$ is a strictly positive constant depending on
$\mathscr{C}$, and $ c_{26}$ is a strictly positive constant
depending on $\mathscr{C}$ and $L$.
\end{prop}

The proof of~\ref{p:borne-esperance} involves a martingale inequality,
proved in Lemma~\ref{l:martingale-exp}, and quantitative bounds on
$\mathcal{L}_n^{(1)}:={}$number of particle paths in $B_{S_n} \cap
R_{S'_{n+1}}$ and $\mathcal{L}_n^{(2)}:={}$number of particle paths in
$B_{S'_{n+1}} \cap R_{S_{n+1}}$, derived in Lemma~\ref
{l:accumulation-particules} with the help of the results proved in Step~1.

%le4 #&#
%
\begin{lemma}\label{l:martingale-exp}
Consider $w \in\S_{\theta}$ such that there is at least one particle
at site $0$. Let $T$ be an $(\F_t)_{t \geq0}$ stopping time such that
$T$ is a backward sub-$\alpha$ time and $]0,T[$ contains a number of
$(0,\alpha)$-crossing times at least equal to $m \geq0$. Then one has
the following bound:
\[
\E_w \biggl( \sum_{(W,u) \in R_{0+}}
e^{-\theta(r_T-W_T)} \un(T < +\infty ) \biggr) \leq e^{-\theta m}
\phi_{\theta}(w).
\]
\end{lemma}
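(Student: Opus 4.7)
The plan is an exponential-martingale argument in the spirit of Lemma~\ref{l:atteinte-droite}. For each particle $(W,u) \in R_{0+}$, the coordinate $W$ is an independent symmetric nearest-neighbour random walk with total jump rate $2$, so
$$M^W_t := \exp\bigl(\theta W_t - 2(\cosh\theta - 1) t\bigr), \qquad t \geq 0,$$
is a non-negative c\`adl\`ag martingale on $(\Omega,(\F_t)_{t \geq 0}, \P_w)$. Summing over the (independent) walks in $R_{0+}$, the process $Z_t := \sum_{(W,u) \in R_{0+}} M^W_t$ is itself a non-negative martingale, with
$$Z_0 = \sum_{(W,u) \in R_{0+}} e^{\theta W_0} = \sum_{x \leq 0} |w(x)|\, e^{\theta x} = \phi_{\theta}(w).$$
Optional stopping combined with Fatou's lemma at the $(\F_t)$-stopping time $T$ gives $\E_w[Z_T \un(T<\infty)] \leq \phi_{\theta}(w)$.

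Next, I would rewrite the target sum as
$$\sum_{(W,u) \in R_{0+}} e^{-\theta(r_T - W_T)} = e^{-\theta r_T + 2(\cosh\theta-1) T} \cdot Z_T,$$
so that the problem reduces to the pointwise inequality $r_T \geq m + \alpha T$ on $\{T<\infty\}$. Granting it and using $\mu = \alpha\theta - 2(\cosh\theta - 1) > 0$ from \eqref{e:constraints-param},
$$-\theta r_T + 2(\cosh\theta - 1) T \leq -\theta m - \mu T \leq -\theta m,$$
so that $\sum_{(W,u) \in R_{0+}} e^{-\theta(r_T - W_T)} \leq e^{-\theta m}\, Z_T$ on $\{T<\infty\}$; taking expectations and invoking the martingale bound on $Z_T$ yields the conclusion of the lemma.

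The inequality $r_T \geq m + \alpha T$ is where the hypotheses on $T$ enter. By the strict increase of the crossing indices, the $m$ $(0,\alpha)$-crossings $t_1 < \cdots < t_m$ in $]0, T[$ have $k_m \geq m$, and thus $r_{t_m} \geq k_m + \alpha t_m \geq m + \alpha t_m$. A convenient route is to introduce the auxiliary stopping time $\tau := \inf\{t \geq 0 : r_t \geq m + \alpha t\}$, which satisfies $\tau \leq t_m < T$ on $\{T<\infty\}$ and $r_{\tau} \geq m + \alpha \tau$ by construction; applying optional stopping to $(Z_t)$ at $\tau$ already produces the lemma's inequality at time $\tau$. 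The backward super-$\alpha$ property of $T$ combined with $T$ being an upward jump time is then used to extend the bound from $\tau$ to $T$, by ruling out trajectories in which $r$ would fall back below $m + \alpha t$ on $[\tau, T]$ while remaining compatible with the constraint $W_s \geq r_T - \alpha(T-s)$ for every $(W,u) \in B_T$ and every $s < T$. The main obstacle is precisely this last propagation step: the bound at $\tau$ is routine, while its extension to $T$ requires careful use of the super-$\alpha$ constraint and of the crossing bookkeeping to control how far the front can retreat on $[\tau, T]$.
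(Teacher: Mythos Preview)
Your martingale argument is exactly the paper's: define $M^W_s=e^{\theta W_s-2(\cosh\theta-1)s}$, apply optional stopping plus Fatou to get $\E_w[M^W_T\un(T<\infty)]\le e^{\theta W_0}$, sum over $(W,u)\in R_{0+}$, and reduce the statement to the pointwise bound $r_T\ge m+\alpha T$ on $\{T<\infty\}$. Up to here there is nothing to add.

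The gap is in how you handle $r_T\ge m+\alpha T$. The hypothesis ``backward super-$\alpha$'' in the lemma statement is a typo for ``backward sub-$\alpha$'' (note that both places where the lemma is applied, namely to the shifted times $S'_{n+1}-S_n$ and $S_{n+1}-S'_{n+1}$ in the proof of Lemma~\ref{l:borne-esperance}, use stopping times that are backward \emph{sub}-$\alpha$; the super-$\alpha$ property, which constrains trajectories of particles in $B_T$, is irrelevant to a sum over $R_{0+}$). With the corrected hypothesis the inequality is immediate and needs none of the machinery you introduce: if $t_1<\cdots<t_m<T$ are $(0,\alpha)$-crossing times with associated integers $k_1<\cdots<k_m$ (strict by the definition of crossing), then $k_m\ge m$ and $r_{t_m}\ge k_m+\alpha t_m\ge m+\alpha t_m$; since $T$ is backward sub-$\alpha$, $r_{t_m}<r_T-\alpha(T-t_m)$, whence $r_T>m+\alpha T$. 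This is precisely what the paper's one-line assertion ``from our assumptions on $T$ and the fact that $r_0=0$'' encodes.

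Your alternative route through $\tau=\inf\{t:r_t\ge m+\alpha t\}$ does not close the argument. Optional stopping at $\tau$ bounds $\sum e^{-\theta(r_\tau-W_\tau)}$, but the lemma asks for the same quantity at time $T$, and the backward \emph{super}-$\alpha$ condition (on blue particles) gives you no control over how $r_T-W_T$ compares to $r_\tau-W_\tau$ for red particles. The ``propagation'' obstacle you flag is real under the stated hypothesis and has no clean fix; once the hypothesis is corrected to sub-$\alpha$, the whole detour disappears.
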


\begin{pf}
Consider $(W,u) \in R_{0+}$, and, for all $s \geq0$, let
\[
M_s:=
e^{\theta W_s - 2 (\cosh(\theta) -1)s}.
\]
Then $(M_s)_{s \geq0}$ is a c\`adl\`ag martingale.
Since $T$ is a stopping time, we have, for all finite $K>0$, that
%e51 #&#
%
\begin{equation}
\label{e:Doob-2} \E_w(M_{T \wedge K}) = \E_w(M_0)
= e^{\theta W_0}.
\end{equation}
Now we have that $\liminf_{K \to+\infty} M_{T \wedge K} \geq M_{T}
\un
(T<+\infty)$, so that, by Fatou's lemma and (\ref{e:Doob-2}),
%e52 #&#
%
\begin{equation}
\label{e:magic-martingale-3}\E_w \bigl( M_{T} \un (T<+\infty) \bigr)
\leq e^{\theta W_0}.
\end{equation}
Now, from our assumptions on $T$ and the fact that $r_0=0$, one has
that, on $\{ T < +\infty\}$, $r_T \geq\alpha T + m $.
Using the fact that, by (\ref{e:constraints-param}), $ 2 (\cosh
(\theta
) -1) \leq\alpha\theta$, we deduce that
\[
2 \bigl(\cosh(\theta) -1 \bigr) T \leq2 \bigl(\cosh(\theta) -1 \bigr) \biggl(
\frac
{r_T-m}{\alpha} \biggr) \leq\theta(r_T-m).
\]
Writing
\[
-\theta r_T + \theta W_T = -\theta r_T +
2 \bigl(\cosh(\theta) -1 \bigr)T - 2 \bigl(\cosh(\theta) -1 \bigr)T + \theta
W_T,
\]
we finally deduce that, on $\{ T < +\infty\}$,
\[
-\theta r_T + \theta W_T \leq-\theta m - 2 \bigl(
\cosh(\theta) -1 \bigr)T + \theta W_T.
\]
In view of (\ref{e:magic-martingale-3}), we deduce that
\[
\E_w \bigl( e^{-\theta(r_T - W_T) } \un(T<+\infty) \bigr) \leq
e^{-\theta m} \E _w \bigl( M_{T} \un(T<+\infty)
\bigr) \leq e^{-\theta m} e^{\theta W_0}.
\]
The result now follows from summing the above inequality over all
$(W,u) \in R_{0+}$.
\end{pf}

%le5 #&#
%
\begin{lemma}\label{l:accumulation-particules}
For all $n \geq1$, and all large enough $\mathscr{C}$, one has the
following bounds:
\begin{eqnarray}
\label{e:accu-1} \E_{\nu} \bigl( \mathcal{L}_n^{(1)}
\un (D_n < +\infty) \mid \F^R_{S_n} \bigr)
&\leq&c_{27} + c_{28} \MM_{n} \qquad\mbox{a.s.},
\\
\label{e:accu-2} \E_{\nu} \bigl( \mathcal{L}_n^{(2)}
\un(D_n < +\infty ) \mid \F ^R_{S_n} \bigr)
&\leq& c_{29} \qquad\mbox{a.s.},
\end{eqnarray}
where $c_{27}$ and $c_{28}$ are strictly positive constants
depending on $\mathscr{C}$ and
$c_{29}$ is a strictly positive constant depending on $\mathscr{C}$
and $L$.
\end{lemma}
\begin{pf}
We start with the proof of (\ref{e:accu-1}). Assume that $S'_{n+1}
\leq
S_n + t$ and that $r_{S'_{n+1}} \leq r_{S_n} + K$ for some $t, K>0$.
We can then bound $\mathcal{L}_n^{(1)}$ by counting the total number of
particle paths $(W,u)$ in $B_{S_n}$ for which there exists $s \in[S_n,
S_n+t]$ such that $W_s \in[r_{S_n}, r_{S_n}+K]$.
This number includes all the particle paths $(W,u)$ in $B_{S_n}$ such
that $W_{S_n} \in[r_{S_n}, r_{S_n}+K]$, plus the particle paths in
$B_{S_n}$ such that $W_{S_n} \geq r_{S_n}+K+1$ that hit level
$r_{S_n}+K$ during the time-interval $[S_n, S_n + t]$.
Assume that we start with $\P_{\nu_{\mathscr{C},+}}$, and let
$\mathcal
{K}_1$ denote the number of particle paths $(W,u)$ in $B_{0}$ such that
$W_{0} \in[0, K]$, while
$\mathcal{K}_2$ denotes the number of particle paths in $B_{0}$ such
that $W_{0} \geq r_{0}+K+1$ that hit level $K$ during the
time-interval $[0, t]$.
By standard properties of the Poisson distribution, we see that
$\mathcal{K}_2$ is a Poisson random variable with distribution $\rho g$,
where
\[
g:= \sum_{x \geq K+1} P_x \Bigl(\inf
_{s \in[0,t]} \zeta_s \leq K \Bigr).
\]
Using the reflection principle, we see that $g \leq g'$, where
\[
g' = 2\sum_{x \geq K+1} P_x(
\zeta_t \leq K).
\]
Now using translation invariance, we can rewrite
\[
g' = 2\sum_{ x \geq1 } P_x(
\zeta_t \leq0 ) = 2\sum_{ x \geq1 }
P_0( x+\zeta_t \leq0 ) = 2 E_0 \bigl( -
\zeta_t \un(\zeta_t \leq-1) \bigr).
\]
Using Schwarz's inequality, we deduce that
$g' \leq2 \sqrt{2t}$.
On the other hand, $\mathcal{K}_1$ is the sum of $\Xi_0$, whose
distribution is that of a Poisson random variable of parameter $\rho$
conditioned to be $\geq\mathscr{C}$, and of a Poisson random variable
of parameter $\rho K$, these two variables being independent, and
independent from $\mathcal{K}_2$.
Using Corollary~\ref{c:comparaison-conditionnement}, we deduce that,
for some strictly positive constant $d_1$ depending on
$\mathscr{C}$,
%e53 #&#
%
\begin{equation}
\label{e:borne-Poisson} \P_{\nu} \bigl( \mathcal{L}_n^{(1)}
\geq\ell, S'_{n+1} \leq S_n + t,
r_{S'_{n+1}} \leq r_{S_n} + K \mid \G ^R_{S_n}
\bigr) \leq d_1 a_{t,K}(\ell),
\end{equation}
where $a_{t,K}(\ell)$ denotes the probability for a Poisson random
variable with parameter $\rho(K+1 +2 \sqrt{2t})$ to be $\geq\ell$.
Now consider two strictly positive constants $b_1, b_2$ with $\rho b_1
< 1$.
Note that, for $K:= b_1 \ell$ and $t:=b_2 \ell$, one has, by standard
large deviations bounds for Poisson random variables (see, e.g., \cite
{DemZei}), that for all $\ell\geq1$,
%e54 #&#
%
\begin{equation}
\label{e:deviations-Poisson} a_{t,K}(\ell) \leq d_2 e^{- d_3 \ell},
\end{equation}
where $d_2, d_3$ are strictly positive constants.
Combining (\ref{e:queue-S'-S}) and (\ref{e:queue-r-S'-S}), we deduce
that, on $\{ S_n < +\infty\}$,
%e55 #&#
%
\begin{equation}
\label{e:borne-t-K} \P_{\nu_{\mathscr{C},+}} \bigl( \mathcal {V}_n(
\ell)^c, D_n < +\infty\mid \F^R_{S_n}
\bigr) \leq e^{\theta} \MM_{n} e^{-d_4 \ell} +
d_5 \ell^{- d_6
\mathscr{C}} \qquad\mbox{a.s.},
\end{equation}
with
\[
\mathcal{V}_n(\ell):= \bigl\{ S'_{n+1} \leq
S_n + b_2 \ell \bigr\} \cap\{ r_{S'_{n+1}} \leq
r_{S_n}+ b_1 \ell\},
\]
and where $d_4, d_5, d_6$ are strictly positive
constants, $d_5$ depending on $\mathscr{C}$.
Combining (\ref{e:borne-Poisson}), (\ref{e:deviations-Poisson}) and
(\ref{e:borne-t-K}), we deduce (\ref{e:accu-1}).

To prove (\ref{e:accu-2}), we use the same argument as in the proof of
(\ref{e:accu-1}), with (\ref{e:queue-S-S'}) and (\ref
{e:queue-r-S-S'}) replacing
(\ref{e:queue-S'-S}) and (\ref{e:queue-r-S'-S}), respectively.
%\resetconstant[constd]
\end{pf}

We are now ready to prove the affine induction inequality on $\MM_n$.
\begin{pf*}{Proof of Proposition~\ref{p:borne-esperance}}
Define
\[
\MM'_{n+1}:= \sum_{(W,u) \in R^*_{S'_{n+1}}}
\exp \bigl( -\theta (r_{S'_{n+1}} - W_{S'_{n+1}} ) \bigr),
\]
where $R^*_{S'_{n+1}}$ is defined as the set $R_{S'_{n+1}}$ from which
we remove the particle path that makes the front climb at time $S'_{n+1}$.
By definition, we have that $\MM'_{n+1} \leq\A^{(1)} + \A^{(2)}$,
with
\[
\A^{(1)}:= \sum_{(W,u) \in R_{S_n}} \exp \bigl( -\theta
(r_{S'_{n+1}} - W_{S'_{n+1}} ) \bigr)
\]
and
\[
\A^{(2)}:= \sum_{(W,u) \in B_{S_{n}} \cap R_{S'_{n+1}} } \exp \bigl( -
\theta(r_{S'_{n+1}} - W_{S'_{n+1}} ) \bigr).
\]
First, using the fact that for each $(W,u) \in R_{S'_{n+1}}$, one has
$W_{S'_{n+1}} \leq r_{S'_{n+1}}$, we have the bound
%e56 #&#
%
\begin{equation}
\label{e:crude-1} \A^{(2)} \leq\mathcal{L}_n^{(1)}.
\end{equation}
Now using Lemma~\ref{l:martingale-exp}, we deduce that
%e57 #&#
%
\begin{equation}
\label{e:mart-1} E_{\nu} \bigl( \A^{(1)} \un(D_n
< +\infty) \mid \F^R_{S_n} \bigr) \leq\MM_{n} +
1,
\end{equation}
where the $+1$ term comes from the fact that the definition of $\MM_n$
involves the particles in $R_{S_n}^*$, not $R_{S_n}$, so we have to add
the contribution to $\A^{(1)}$ of the particle path $(W^{*n}, u^{*n})$, which we bound by $1$.
Now we have that $\MM_{n+1} \leq\B^{(1)} + \B^{(2)}$,
with
\[
\B^{(1)}:= \sum_{(W,u) \in R_{S'_{n+1}}} \exp \bigl( -\theta
(r_{S_{n+1}} - W_{S_{n+1}} ) \bigr)
\]
and
\[
\B^{(2)}:= \sum_{(W,u) \in B_{S'_{n+1}} \cap R_{S_{n+1}} } \exp \bigl( -
\theta(r_{S_{n+1}} - W_{S_{n+1}} ) \bigr).
\]
As in (\ref{e:crude-1}), we have the bound
%e58 #&#
%
\begin{equation}
\label{e:crude-2} \B^{(2)} \leq\mathcal{L}_n^{(2)}.
\end{equation}
On the other hand, using
Lemma~\ref{l:martingale-exp}, we deduce that
%e59 #&#
%
\begin{equation}
\label{e:mart-2} E_{\nu} \bigl( \B^{(1)} \un(D_n
< +\infty) \mid \F^R_{S'_{n+1}} \bigr) \leq e^{-\theta L}
\MM'_{n+1} + 1.
\end{equation}
Combining (\ref{e:crude-1}), (\ref{e:mart-1}), (\ref{e:crude-2}),
(\ref{e:mart-2}) and using the fact that $\F^R_{S_n} \subset\F
^R_{S'_{n+1}}$, we deduce that, on $\{ S_n < +\infty\}$,
\begin{eqnarray*}
\E_{\nu} \bigl( \MM_{n+1} \un(D_n < +\infty)
\mid \F ^R_{S_n} \bigr) &\leq& e^{-\theta L}
\MM_{n} + e^{-\theta L} + 1
\\
&&{} + e^{-\theta L} \E _{\nu} \bigl( \mathcal{L}_n^{(1)}
\un(D_n < +\infty) \mid \F^R_{S_n} \bigr)
\\
&&{}  + \E _{\nu} \bigl( \mathcal{L}_n^{(2)}
\un(D_n < +\infty) \mid \F^R_{S_n} \bigr).
\end{eqnarray*}
The conclusion now follows from Lemma~\ref{l:accumulation-particules}.
\end{pf*}

%s4.7.3 #&#
\subsubsection{Step 3: Tail bounds for $n=1$}

So far, we have proved results dealing with the behavior of the system
during the time-interval $[S_n, S_{n+1}]$, for $n \geq1$. The case of
the interval
$[0,S_1]$ is a little bit different since it starts at time $D_0=0$,
where not all the properties of times $S_n,  n \geq1$ are met.
However, the distribution of $(R_0,B_0)$ is exactly known, and, in this
case, Proposition~\ref{p:ballistique-dessous-KS} directly yields the
estimates that we obtained by a combination of Proposition~\ref
{p:ballistique-dessous} and Corollary~\ref
{c:comparaison-conditionnement} in the case $[S_n, S_{n+1}]$. We merely
state the relevant results, whose proofs are similar (and simpler) than
those given in steps 1~and~2.

%pr13 #&#
%
\begin{prop}\label{p:bornes-initiales}
One has the following bounds:
\begin{eqnarray}
\label{e:queue-S'-S-0} \P_{\nu} \bigl( S'_{1} \geq
t \bigr) &\leq& c_{30} t^{- c_{31} \mathscr{C}} \qquad\mbox{a.s.},
\\
\label{e:queue-S-S'-0} \P_{\nu} \bigl( S_{1} -
S'_{1} \geq t \bigr) &\leq& c_{32}
t^{- c_{33} \mathscr{C}} \qquad\mbox{a.s.},
\\
\label{e:borne-esperance-0}
\E_{\nu}( \MM_{1} ) &\leq& c_{34},
\end{eqnarray}
where $c_{30},\ldots, c_{34}$ are strictly positive constants,
with $c_{30}$ depending on $\mathscr{C}$,
and $c_{32}, c_{34}$ depending on $\mathscr{C}$ and $L$.
\end{prop}

%s4.7.4 #&#
\subsubsection{Conclusion}

We now put together the different pieces established in the previous
steps. The first result is a uniform bound on the conditional
expectation of $\MM_n$ given that the first $n-1$ attempts at obtaining
an $\alpha$-separation time have failed.

%pr14 #&#
%
\begin{prop}\label{p:bornitude}
For all large enough $\mathscr{C}$, and all large enough $L$ (depending
on $\mathscr{C}$), there exists $c_{35}<+\infty$, depending
on $\mathscr{C}$ and $L$, such that, for all $n \geq1$,
\[
\E_{\nu}( \MM_{n} \mid D_{n-1} < +\infty) \leq
c_{35}.
\]
\end{prop}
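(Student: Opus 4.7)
The plan is induction on $n \geq 1$. The base case is immediate: since $D_0 = 0$ almost surely, the conditioning event has full measure, and Lemma~\ref{l:borne-esperance-0} gives $\E_{\nu}(\MM_1 \mid D_0 < +\infty) = \E_{\nu}(\MM_1) \leq \Cr{cc300}$.

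For the inductive step, write $\tilde a_n := \E_{\nu}(\MM_n \mid D_{n-1} < +\infty)$ and $p_n := \P_{\nu}(D_n < +\infty)$. Integrating the bound of Lemma~\ref{l:borne-esperance} over the probability space (using that $\un(S_n<+\infty)$ is $\F^R_{S_n}$-measurable by the very definition of $\F^R_{S_n}$), invoking the a.s.\ identity $\{S_n < +\infty\} = \{D_{n-1} < +\infty\}$, and dividing by $p_n$ yields the affine recursion
\[
\tilde a_{n+1} \leq \frac{p_{n-1}}{p_n}\bigl(\Cr{cc30} e^{-\theta L} \tilde a_n + \Cr{cc31}\bigr).
\]
The key auxiliary step is to bound the ratio $p_{n-1}/p_n$ uniformly in $n$.

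To do this, observe that whenever $S_n$ is an $\alpha$-separation time, the definition of a forward sub-$\alpha$ time forces the particle $(W^{*n},u^{*n})$ that makes the front jump at $S_n$ to remain at $r_{S_n}$ throughout $[S_n, S_n + \alpha^{-1}]$. By the strong Markov property for the underlying i.i.d.\ random walk system applied at the stopping time $S_n$, the conditional probability of this stay-put event given $\F_{S_n}$ equals exactly $e^{-2/\alpha}$, namely the probability that a rate-$2$ simple symmetric walk performs no jump during a time-interval of length $\alpha^{-1}$. Since $\F^R_{S_n} \subset \F_{S_n}$ one obtains $\P_{\nu}(D_n = +\infty \mid \F^R_{S_n}) \leq e^{-2/\alpha}$ on $\{S_n < +\infty\}$, hence after averaging $p_n/p_{n-1} \geq 1 - e^{-2/\alpha}$. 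Setting $\lambda := (1 - e^{-2/\alpha})^{-1}$, the recursion becomes $\tilde a_{n+1} \leq \lambda \Cr{cc30} e^{-\theta L} \tilde a_n + \lambda \Cr{cc31}$. Choosing $L$ large enough (depending on $\mathscr{C}$) so that $\lambda \Cr{cc30} e^{-\theta L} < 1$ makes the recursion a contraction, closing the induction with $\Cr{cborne} := \Cr{cc300} \vee \lambda \Cr{cc31}/(1-\lambda \Cr{cc30} e^{-\theta L})$.

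The only substantive ingredient beyond the mechanical iteration of Lemma~\ref{l:borne-esperance} is the uniform lower bound on $p_n/p_{n-1}$, which itself reduces to the observation that a single ``stay-put'' event for the jumping particle is incompatible with the forward sub-$\alpha$ condition and has probability bounded away from $1$. I do not anticipate any deep obstacle; the point that needs care is the bookkeeping with the nested $\sigma$-algebras $\F^R_{S_n} \subset \F_{S_n}$ and the verification that $S_n$ is indeed a stopping time for the natural filtration of the i.i.d.\ random walks so that the strong Markov property applies.
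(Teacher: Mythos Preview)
Your proof is correct and follows essentially the same approach as the paper: base case via Lemma~\ref{l:borne-esperance-0}, inductive step via Lemma~\ref{l:borne-esperance}, and a uniform lower bound on $\P_{\nu}(D_n<+\infty\mid D_{n-1}<+\infty)$ to turn the recursion into a contraction for $L$ large. The only cosmetic difference is the specific event used for that lower bound: you force $D_n<+\infty$ via condition~(4) (the jumping particle fails to stay put on $[S_n,S_n+\alpha^{-1}]$, with explicit probability $1-e^{-2/\alpha}$), whereas the paper invokes the event that this same particle crosses the half-line of slope $\alpha$ through $(S_n,r_{S_n})$; both amount to a strong Markov argument for a single rate-$2$ walk at the stopping time $S_n$.
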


\begin{pf}
For $n=1$, the result is just (\ref{e:borne-esperance-0}). Consider $n
\geq1$, and write
%e60 #&#
%
\begin{equation}
\label{e:double-cond}\E_{\nu}( \MM_{n+1} \mid D_{n} <
+ \infty) = \frac{ \E_{\nu}( \MM_{n+1} \un(D_n<+\infty) \mid  D_{n-1}<
+\infty) }{\P_{\nu}( D_n < +\infty\mid  D_{n-1} < +\infty)}.
\end{equation}
Using Proposition~\ref{p:borne-esperance} and the fact that the event
$D_{n-1}<+\infty$ is measurable with respect to $\F^R_{S_n}$, we
deduce that
\[
\E_{\nu} \bigl( \MM_{n+1} \un(D_n<+\infty) \mid
D_{n-1}< +\infty \bigr) \leq c_{25} e^{-\theta L}
\E_{\nu} ( \MM_{n} \mid D_{n-1}<+\infty) +
c_{26}.
\]
On the other hand, observe that there exists a strictly positive
constant $d_1$ such that
%e61 #&#
%
\begin{equation}
\label{e:prob-echec} \P_{\nu}( D_n < +\infty\mid
D_{n-1} < +\infty) \geq d_1,
\end{equation}
considering, for example, the probability for the particle that makes
the front climb at time $S_{n}$ to cross at a time $> S_n$ the
half-line of slope $\alpha$ starting at $(S_n, r_{S_n})$.
Combining (\ref{e:double-cond}) and (\ref{e:prob-echec}), we deduce that
\[
\E_{\nu}( \MM_{n+1} \mid D_{n} < +\infty) \leq
d_1^{-1} c_{25} e^{-\theta L} \E_{\nu} (
\MM_{n} \mid D_{n-1}<+\infty) + d_1^{-1}
c_{26}.
\]
When $L$ is large enough so that $ d_1^{-1} c_{25}
e^{-\theta L} < 1$, we deduce, using also (\ref{e:borne-esperance-0}),
that the sequence
$ ( \E_{\nu}( \MM_{n} \mid  D_{n-1} < +\infty)  )_{n \geq
1}$ is bounded.
%\resetconstant[constd]
\end{pf}

We are now ready to prove our main estimate on the regeneration
structure, namely, Proposition~\ref{p:moments-renouvel}.

\begin{pf*}{Proof of Proposition~\ref{p:moments-renouvel}}
In this proof, we assume that $\mathscr{C}$ and $L$ are large enough so
that all the previous results hold. Remember the definition $\K= \inf
\{ n \geq1;   D_n = +\infty\}$. Our first claim is that, for some
strictly positive constant $d_1$ depending on $\mathscr{C}$
and $L$, for all $k \geq1$,
%e62 #&#
%
\begin{equation}
\label{e:succes} \P_{\nu}(\K\geq k) \leq d_1^k.
\end{equation}
From Corollary~\ref{c:prob-super-alpha}, we have that
\[
d_2:= \P_{\nu_{\mathscr{C},+}} \bigl( \{\mbox{$0$ is a forward super $
\alpha$-time for $B_0$} \} \cap G \bigr) > 0.
\]
Consider $n \geq1$. Using Proposition~\ref{p:conditionnement-S} and
the fact that
\[
G \subset G \bigl(Q^{(n)}, (r_{s+S_n} - r_{S_n})_{-S_n \leq s \leq0}, -r_{S_n} \bigr),
\]
we deduce that, on $\{ D_{n-1} < +\infty\}$, one has that
\[
\P_{\nu} \bigl(\mbox{$S_n$ is a forward and backward
super $\alpha$-time for $B_{S_n}$ } \mid \G^R_{S_n}
\bigr) \geq d_2 \qquad\mbox{a.s.}
\]
On the other hand, the event that $S_n$ is a forward sub $\alpha$-time
is measurable with respect to $\G^R_{S_n}$. Call $d_3$ the
probability for a random walk starting at zero to remain at zero during
the time-interval $[0,\alpha^{-1}]$ and then to satisfy $W_s \leq
\alpha s - 1$ for all $s \geq\alpha^{-1}$, which is $>0$ by Lemma
\ref
{l:atteinte-droite}. Using Lemma~\ref{l:controle-proba-atteinte}, we
deduce that, for arbitrary $K>0$, on $\{ D_{n-1} < +\infty\}$,
\[
\P_{\nu} \bigl(\mbox{$S_n$ is a forward sub $
\alpha$-time} \mid \F^R_{S_n} \bigr) \geq d_3 g(K)
\un( \MM_{n} \leq K) \qquad\mbox{a.s.}
\]
We deduce that
\[
\P_{\nu}(D_{n}<+\infty\mid D_{n-1}<+\infty) \geq
d_2 d_3 g(K) \P_{\nu}( \MM_{n} \leq K \mid
D_{n-1}<+\infty).
\]
By Proposition~\ref{p:bornitude}, we have that $\E_{\nu}( \MM_{n} \mid
D_{n-1} < +\infty) \leq c_{35}$, so that, by Markov's inequality,
$\P_{\nu}( \MM_{n} \leq2 c_{35} \mid  D_{n-1} < +\infty) \geq1/2$.
Setting $K:= 2 c_{35}$ and $d_1 :=1/2( d_2 d_3
g(K))$, we see that (\ref{e:succes}) is proved.

Now\vspace*{1pt} observe that, by definition, $S_{\K}$ is an $\alpha$-separation
time. As a consequence, we have that
$\kappa_1 \leq S_{\K}$.
Writing $S_{\K}:= S_1 + \sum_{k=1}^{\K-1} (S_{k+1}-S_k)$, we deduce
that for all $t$ and $n \geq1$,
%e63 #&#
%
\begin{eqnarray}
\label{e:decomp-telescopique}\P_{\nu}(\kappa_1 \geq t) &\leq&
\P_{\nu}( \K> n ) + \P_{\nu}(S_1 \geq t/n)
\nonumber\\[-8pt]\\[-8pt]\nonumber
&&{}+ \sum_{k=1}^{n-1} \P_{\nu}(
S_{k+1}-S_k \geq t/n, D_{k} < +\infty).
\end{eqnarray}
Let $t':=t/n$.
Using (\ref{e:queue-S'-S-0}) and (\ref{e:queue-S-S'-0}), we deduce that
%e64 #&#
%
\begin{equation}
\label{e:S1} \P_{\nu}(S_1 \geq t/n) \leq c_{30}
\bigl(t'/2 \bigr)^{- c_{31} \mathscr{C}} + c_{32} \bigl(t'/2
\bigr)^{- c_{33}
\mathscr{C}}.
\end{equation}
On the other hand, one has that
\begin{eqnarray*}
&& \P_{\nu}( S_{k+1}-S_k \geq t/n,
D_{k} < +\infty)
\\
&&\qquad \leq\P_{\nu}( S_{k+1}-S_k
\geq t/n, D_{k} < +\infty\mid D_{k-1}<+\infty),
\end{eqnarray*}
and, using (\ref{e:queue-S'-S}), (\ref{e:queue-S-S'}) and Proposition
\ref{p:bornitude}, we deduce that
%e65 #&#
%
\begin{eqnarray}
\label{e:Sk}
&& \P_{\nu}( S_{k+1}-S_k \geq t/n,D_{k} < +\infty)
\nonumber\\[-8pt]\\[-8pt]\nonumber
&&\qquad \leq  c_{35} e^{-\Clconstccseptyn (t'/2)} +
c_{11} \bigl(t'/2 \bigr)^{- c_{12} \mathscr{C}}
+ c_{16} \bigl(t'/2 \bigr)^{- c_{17}
\mathscr{C}}.
\end{eqnarray}
Choosing, for example, $n:= \lceil t^{1/2} \rceil $, and using (\ref
{e:succes}), (\ref{e:S1}) and (\ref{e:Sk}) to bound the terms in
(\ref{e:decomp-telescopique}), we deduce that
%e66 #&#
%
\begin{equation}
\label{e:queue-kappa} \P_{\nu}(\kappa_1 \geq t) \leq
d_4 t^{- d_5 \mathscr{C} + 1/2
},
\end{equation}
where $ d_4$ and $d_5$ are strictly
positive constants, with
$ d_4$ depending on $\mathscr{C}$ and $L$. Choosing
$\mathscr{C}$ large enough, this proves the fact that $\kappa_1$ has a
finite second moment. Now write
\[
\P_{\nu}(r_{\kappa_1} \geq\ell) \leq\P_{\nu}(
\kappa_1 > t) + \P_{\nu} \Bigl(\sup_{s \in[0,t]}
r_s \geq\ell \Bigr).
\]
Choosing $t:= \ell/ C'_1(\rho)$, and using Proposition~\ref
{p:ballistique-dessus-KS-extension} and
(\ref{e:queue-kappa}), we deduce that
%e67 #&#
%
\begin{equation}
\label{e:queue-rkappa} \P_{\nu}(r_{\kappa_1} \geq \ell) \leq
d_6 \ell^{- d_5 \mathscr{C} +
1/2},
\end{equation}
where $ d_6$ is a strictly positive constant depending
on $\mathscr{C}$ and $L$. Choosing $\mathscr{C}$ large enough, this
proves the fact that $r_{\kappa_1}$ has a finite second moment.
%\resetconstant[constd]
\end{pf*}

%s5 #&#
\section{Extension to the case $D_R>D_B$}\label{s:extension}

In this section, we briefly explain how to extend the approach leading
to Theorem~\ref{t:tcl} for the single-rate KS model, to the remanent KS
model with $D_R \geq D_B$, leading to Theorems~\ref{t:lgn-remanent} and
\ref{t:tcl-remanent}. The basic idea is to express the random walk
trajectories $(W,u)$ actually followed by particles in the model where
$D_R > D_B$, as
time-changed trajectories, with respect to random walk trajectories
$(\W,u)$ with constant jump rate. The key observation, stated as Lemma~\ref
{l:bleu-rouge} below, is that, due to the fact that $D_R > D_B$, the
set of particle trajectories $(W,u)$ that are blue at an upward time
for the front, coincides with the set of particle trajectories $(\W,u)$
whose position at time $t$ is above the position of the front (except
for the one that makes the front climb at time $t$). Finally, the
remanent character of the infection makes it possible to deduce
ballisticity estimates for the front by coupling with a single-rate model.

We now rigorously define the infection dynamics of the remanent
infection model for $D_R > D_B$, assuming without loss of generality
that $D_B=2$. To emphasize the similarities, we use as much as possible
the same notation that were already used for the single-rate KS
infection model.

We use a construction of the dynamics with $D_R > D_B=2$ that uses
random walk trajectories $(\W,u)$ with constant jump rate $2$, for
which our reference probability space for paths $(\W,u)$ is $(\Omega,
\F, \P_{w})$. As long as a particle is blue, it follows the corresponding
trajectory
in the usual way, while as soon as it is turned into a red particle, it
starts following the trajectory with a speed multiplied by a factor
$D_R/2$. As a result, the actual path $(W,u)$ followed by a particle is
related to the path $(\W,u) \in\Omega$ by a time-change, which we
describe below.

Let us first define the trajectory of the front. Since, by definition,
the front can only perform upward jumps, it makes sense to start with
$r_0:=0$, which leads to the simplification that $r_{T_k}:= k$ for all
$k \geq0$.
We start with $T_0:=0$, $r_0:=0$, and define inductively the sequence
$(T_{k})_{k \geq0}$ together with the
value of $(r_t)_{t \in[0,T_k]}$.
Consider $t>T_{\ell}$. We say that $t$ is upward if there exists
$(\W,u) \in\Psi$ such that $\W_s \leq r_s$ for some $s \in[0,t[$ and
such that $ \W_{v-} = \ell$ and $ \W_{v} = \ell+1$, where
%e68 #&#
%
\begin{equation}
\label{e:time-change}v:=\tau+\frac{D_R}{2}(t-\tau),\qquad \tau:= \inf \bigl\{ s \in[0,t[; \W_s \leq r_s \bigr\}.
\end{equation}
Then let
\[
T_{\ell+1}:= \inf\{ t > T_{\ell}; \mbox{ $t$ is upward} \},
\]
and
\[
r_t:= \ell\qquad\mbox{on }[T_{\ell}, T_{\ell+1}[.
\]
The sets $R_t$ and $B_t$ of red and blue particles at time $t$ are then
defined exactly as in the single-rate KS infection model, namely
\begin{eqnarray*}
B_t &:=& \bigl\{ (\W,u) \in\Psi; \forall s \in[0, t[,
\W_s > r_s \bigr\},
\\
R_t &:=& \bigl\{ (\W,u) \in\Psi; \exists s \in[0, t[,
\W_s \leq r_s \bigr\}.
\end{eqnarray*}

We now properly define $(W,u)$ as a time-changed version of $(\W,u)$.
Using the notation defined in (\ref{e:time-change}), we let $W_t:= \W
_t$ for $t \in[0,\tau]$ and $W_t:= \W_v$ for $t> \tau$. This construction is illustrated in Figure~\ref{f:simu-KS-couplage-XY-2-3}.

%f6 #&#
%
\begin{figure}

\includegraphics{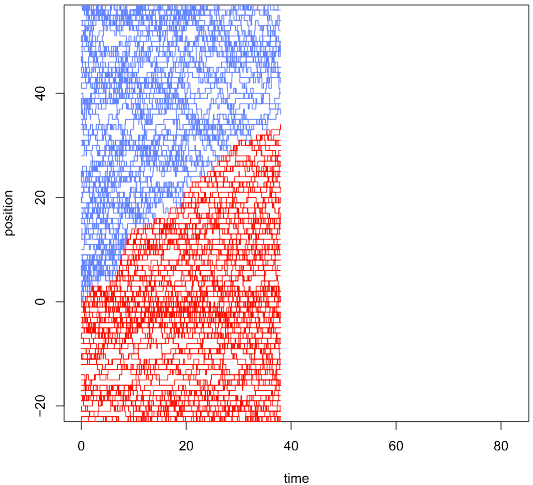}

\footnotesize{(a)~Actual trajectories}\vspace*{6pt}

\includegraphics{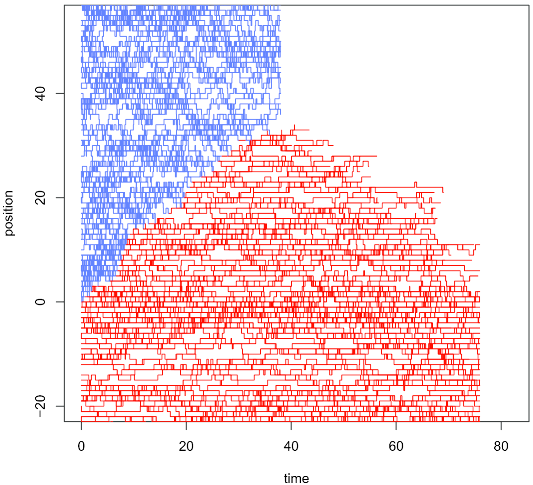}

\footnotesize{(b)~Time-changed trajectories with constant jump rate $D_B=1$}

\caption{Realization of the KS infection model with $D_R=2$ and
$D_B=1$. The actual evolution of the process is shown in~\textup{(a)}. The evolution of the corresponding
time-changed trajectories with a constant jump rate $D_B=1$ is shown in~\textup{(b)}.}
\label{f:simu-KS-couplage-XY-2-3}
\end{figure}

We now make the following key remark.
%le6 #&#
%
\begin{lemma}\label{l:bleu-rouge}
For all $k \geq1$, the set $B_{T_k}$ coincides with the set of $(\W,u)
\in\Psi$ such that $\W_{T_k} \geq k$, minus the particle that makes
the front climb at time $T_k$.
\end{lemma}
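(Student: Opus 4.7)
The plan is to establish both set inclusions, leveraging two structural facts from the time-change construction: $(i)$ for a particle that remains blue throughout $[0,T_k)$, the time-change in \eqref{e:time-change} has not been triggered, so its reference walk $\W$ coincides with its actual walk $W$ on $[0,T_k)$; and $(ii)$ for a particle that has turned red at some time $\tau\in[0,T_k)$, the relation $W_t=\W_{v(t)}$ evaluated at $t=t'':=\tau+(2/D_R)(T_k-\tau)$ yields $\W_{T_k}=W_{t''}$, with $\tau<t''<T_k$ because $D_R>2$.

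For the inclusion $\{(\W,u):\W_{T_k}\geq k\}\setminus\{(\W^*,u^*)\}\subseteq B_{T_k}$ (where $(\W^*,u^*)$ denotes the particle making the front climb at $T_k$), I would argue by contradiction: if such a particle $(\W,u)$ had turned red at some $\tau<T_k$, observation $(ii)$ would give $\W_{T_k}=W_{t''}$, and since the particle is red at real time $t''$ the definition of the remanent front as the rightmost past red position would force $W_{t''}\leq r_{t''}$; combined with $r_{t''}\leq k-1<k$ (because $t''<T_k$ and $T_k$ is the first real time at which $r$ reaches $k$), this contradicts $\W_{T_k}\geq k$. For the reverse inclusion, take $(\W,u)\in B_{T_k}$; the ``upward'' condition in \eqref{e:time-change} forces $\W^*_s\leq r_s$ for some $s\in[0,T_k)$, which is incompatible with membership in $B_{T_k}$, so $(\W,u)\neq(\W^*,u^*)$. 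Observation $(i)$ combined with $r_s=k-1$ on $[T_{k-1},T_k)$ and the integer nearest-neighbor character of $\W$ then yields $\W_{T_k-}\geq k$, and it remains to upgrade this to $\W_{T_k}\geq k$ by excluding a downward jump of $\W$ at reference time $T_k$.

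The hard part is precisely this last step. The no-simultaneous-jumps property built into $\Omega$ only forbids two reference walks from sharing a reference jump time, but the relevant jump of $\W^*$ occurs at reference time $v^*=\tau^*+(D_R/2)(T_k-\tau^*)$, where $\tau^*<T_k$ is the first time $\W^*$ meets the front, and $D_R>2$ gives $v^*>T_k$; thus $\W^*$ jumping at $v^*\neq T_k$ does not directly forbid another reference walk from jumping at reference time $T_k$. One therefore invokes that, under $\P_\nu$, the random time $T_k$ is a measurable function of $(\W^*,\tau^*)$ and of the front history up to $T_{k-1}$, and by independence of the distinct reference walks together with absolute continuity of their Poisson jump-time distributions, $T_k$ almost surely avoids the reference jump times of every walk other than $\W^*$. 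The pathwise identity claimed in the lemma therefore holds on a $\P_\nu$-full-measure subset of $\Omega$, and this null-event step is the only genuinely nontrivial ingredient; all other parts are direct consequences of the definitions of the time-change, of the remanent front, and of $B_{T_k}$.
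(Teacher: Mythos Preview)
Your argument matches the paper's: the contrapositive direction (a red particle with $\W_{T_k}\geq k$ yields a contradiction via the inverse time-change $t''=\tau+(2/D_R)(T_k-\tau)<T_k$, producing a red particle at level $\geq k$ before $T_k$) is identical to the paper's proof.

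For the direction $B_{T_k}\subseteq\{\W_{T_k}\geq k\}$, the paper simply asserts that $\W_{T_k}=W_{T_k}$ ``must by definition be $\geq k$'', glossing over the endpoint issue you isolate. You are right that the no-simultaneous-jumps clause in the definition of $\Omega$ concerns reference times only, and since the front's real-time jump at $T_k$ corresponds to a reference jump of $\W^*$ at $v^*>T_k$, nothing in $\Omega$ forbids another reference walk from jumping at reference time $T_k$. Your null-set argument under $\P_\nu$ is the correct fix, and an almost-sure version of the lemma is all that is needed downstream. One minor imprecision: $T_k$ depends on all red walks, not just on $(\W^*,\tau^*)$ and the front history up to $T_{k-1}$. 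The clean way to phrase your point is that on $\{(\W,u)\in B_{T_k}\}$ the walk $\W$ never touches the front on $[0,T_k)$ and hence does not affect it, so $T_k$ coincides with the value $T_k^{-\W}$ computed from the remaining walks; since $T_k^{-\W}$ is independent of the Poisson jump times of $\W$, the event $\{\W\text{ jumps at }T_k^{-\W}\}$ is null, and a countable union bound over walks finishes.
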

Note that the above result is an immediate consequence of the
definition when $D_R=D_B$, but not in the present case, due to the time-change.
\begin{pf}
One inclusion is immediate: a particle path $(\W,u)$ in $B_{T_k}$
evolves using the jump rate $D_B=2$ up to at least time $T_k$, so that
$\W_{T_k}$ indeed corresponds to
the position $W_{T_k}$ of the corresponding particle at time $T_k$, and
must by definition be $\geq k$. On the other hand, assume that a $(\W,u) \in R_{T_k}$ is such that
$\W_{T_k} \geq k$, and hits (or lies below, to include particles in
$R_{0+}$) the front for the first time at a time $\tau<T_k$. Introduce
the time $t:= \tau+(T_k-\tau)\frac{2}{D_R}$.
Since\vspace*{2pt} $D_R>D_B=2$, we have $t < T_k$, and by definition one has $W_t =
\W_{T_k} \geq k$, whence the existence of a red particle above $k$ at a
time $<T_k$, which contradicts the definition of $T_k$.
\end{pf}

One now defines the renewal structure exactly as for the single-rate KS
infection model, but with the time-changed trajectories $W$ replacing
the trajectories $\W$.
Similarly, we can define
\begin{eqnarray*}
\F^R_t &:=& \sigma \bigl( (W_s,u); s \leq t,
(\W,u) \in R_t \bigr),
\\
\F^R_T &:=& \sigma(T, r_T) \vee\sigma \bigl(
(W_s,u); s \leq t, (\W,u) \in R_T \bigr),
\\
\G^R_t&:=& \sigma \bigl( (W_s,u); s \in\R,
(\W,u) \in R_t \bigr),
\\
\G^R_T&:=& \sigma(T, r_T) \vee\sigma \bigl(
(W_s,u); s \in\R, (\W,u) \in R_T \bigr).
\end{eqnarray*}
Note that it does not matter whether we define the $\sigma$-algebras
$\G
^R_{t}$ using the original or time-changed trajectories, since in both
cases the history of the front up to time $t$ is measurable, due to the
fact that the $\sigma$-algebra includes the full trajectories (and not
just the trajectories up to time $t$). The same remark is valid for $\G
^R_T$, where $T$ is a nonnegative random time.
With the help of Lemma~\ref{l:bleu-rouge}, and of the fact that, for
any $(\W,u) \in B_{T_k}$, one has $W_s=\W_s$ for all $s \leq T_k$,
it is then possible to re-prove Propositions~\ref{p:renouvel-1} in
exactly the same way as for the single-rate KS infection model.

The key advantage of introducing remanence in the model is that, when
$D_R > D_B=2$, a comparison holds with the single rate model with jump
rate equal to $2$.
%le7 #&#
%
\begin{lemma}\label{l:comparaison-remanent}
Let $\rtun$ denote the front of the single-rate KS model with rate $2$,
and $\rtdeux$ denote the front of the remanent KS model. If $D_R >
D_B=2$, one has that
$\rtun\leq\rtdeux$ for any $t$.
\end{lemma}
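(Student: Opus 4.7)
The plan is to couple the two processes on a common probability space by sharing the family $\Psi = \{(\W,u)\}$ of rate-$2$ random walk trajectories used in the construction of Section \ref{s:extension}. In the single-rate model each particle simply follows $\W$ at unit speed throughout, whereas in the remanent model each particle follows $\W$ at unit speed while blue and, once infected at real time $\sigma$, follows the time-changed path $s \mapsto \W_{\sigma+(D_R/2)(s-\sigma)}$. Write $\sigma_1(u), \sigma_2(u) \in [0,+\infty]$ for the first times particle $u$ turns red in models $1$ and $2$ respectively, and $V^{(i)}_t$ for the set of sites ever visited by some red particle up to real time $t$ in model $i$. Since $\rtun \le \sup V^{(1)}_t$ (any red particle in model $1$ sits at a site in $V^{(1)}_t$), and $\rtdeux = \sup V^{(2)}_t$ by the very definition of the remanent front, the lemma will follow as soon as I establish the inclusion $V^{(1)}_t \subset V^{(2)}_t$ for every $t \ge 0$.

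The heart of the argument is the pointwise inequality $\sigma_2(u) \le \sigma_1(u)$ for every particle $u$, which I would prove by strong induction, ordering particles by their value of $\sigma_1$. Initially red particles satisfy $\sigma_1(u)=\sigma_2(u)=0$, giving the base case. For the inductive step with $\sigma_1(u) > 0$, unpacking the single-rate infection rule shows that at the instant $\sigma_1(u)$ the particle $u$ shares its site $x = \W_{\sigma_1(u)}(u)$ with some other particle $v$ such that $\W_{\sigma_1(u)}(v) = x$ and $\sigma_1(v) < \sigma_1(u)$. The induction hypothesis yields $\sigma_2(v) \le \sigma_1(v) < \sigma_1(u)$, so in the remanent model $v$ has been traversing $\W_v$ at the accelerated speed $D_R/2 > 1$ since $\sigma_2(v)$, and therefore reaches position $x = \W_{\sigma_1(u)}(v)$ at the real time
$$t^\star := \sigma_2(v) + (2/D_R)\bigl(\sigma_1(u)-\sigma_2(v)\bigr),$$
which is strictly less than $\sigma_1(u)$ because $D_R > 2$. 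Hence $x$ has been occupied by a red particle in the remanent model by real time $t^\star < \sigma_1(u)$, and consequently the (non-decreasing) remanent front satisfies $r_s \ge x$ for every $s \ge t^\star$.

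Now suppose for contradiction that $\sigma_2(u) > \sigma_1(u)$. Then $u$ is blue in the remanent model throughout $[0,\sigma_1(u)]$ and so its actual position coincides with $\W_{\cdot}(u)$ on that interval, placing it at $x$ at real time $\sigma_1(u)$. But the remanent characterization $B_t = \{(\W,u) : \W_s(u) > r_s \text{ for all } s < t\}$ recalled in Section \ref{s:extension}, combined with $r_s \ge x$ for $s \in [t^\star,\sigma_1(u))$, forces $\W_s(u) \le r_s$ for some $s \le \sigma_1(u)$ (either because $u$ has arrived at $x$ earlier and $x$ has become contaminated in the meantime, or because $u$ first arrives at $x$ at $s = \sigma_1(u)$ itself, in which case contamination of $x$ strictly before $\sigma_1(u)$ still triggers infection at $\sigma_1(u)$). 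Either way $\sigma_2(u) \le \sigma_1(u)$, contradicting the supposition and closing the induction. The inclusion $V^{(1)}_t \subset V^{(2)}_t$ is then immediate: any site $\W_s(u) \in V^{(1)}_t$ with $\sigma_1(u) \le s \le t$ is visited by $u$ while red in the remanent model at some real time no later than $s$, since $\sigma_2(u) \le \sigma_1(u)$ and the accelerated time-change only makes $u$'s $\W$-trajectory reach $\W$-time $s$ sooner.

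The main obstacle is the inductive step, and more specifically the verification that the strict inequality $t^\star < \sigma_1(u)$ — which is where the assumption $D_R > 2$ is crucial — really does force $\sigma_2(u) \le \sigma_1(u)$ in both of the sub-cases above (namely, $u$ arriving at $x$ in the remanent model before $t^\star$ and lingering there, versus $u$ arriving at $x$ exactly at $\sigma_1(u)$). Once this bookkeeping is handled, the rest reduces to unfolding the definitions of Section \ref{s:extension}.
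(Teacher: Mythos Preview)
The paper states Lemma~\ref{l:comparaison-remanent} but does not supply a proof, so there is no argument to compare against directly. Your coupling via the shared family $\Psi$ and the time-change of Section~\ref{s:extension} is exactly the right set-up, and the core computation --- that a particle $v$ which is already red in the remanent model reaches the $\W$-position $\W_{\sigma_1(u)}(v)$ at the strictly earlier real time $t^\star=\sigma_2(v)+(2/D_R)(\sigma_1(u)-\sigma_2(v))$ --- is the crux of the matter.

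There is, however, a genuine gap, and it is not where you locate it. The sub-case analysis of the inductive step is fine: once $r^{(2)}_s\ge x$ for all $s\in[t^\star,\sigma_1(u)]$, the assumption $\sigma_2(u)>\sigma_1(u)$ forces $\W_{\sigma_1(u)}(u)>r^{(2)}_{\sigma_1(u)}\ge x$, contradicting $\W_{\sigma_1(u)}(u)=x$. The real issue is the phrase ``strong induction, ordering particles by their value of $\sigma_1$''. This is only valid if that order is well-founded, i.e.\ if every particle with $\sigma_1(u)<\infty$ is linked to an initially red particle by a \emph{finite} infection chain. That is true $\P_\nu$-a.s., but it is not automatic: it relies on the fact that only finitely many particles visit any bounded space-time box (a consequence of the $\S_\theta$ structure and standard random-walk tail bounds), so that an infinite descending chain of infection times accumulating at some $T\ge 0$ would place infinitely many distinct particles near the front position $r^{(1)}_{T+}$ in the time window $(T,\sigma_1(u_0)]$. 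You should either supply this argument or sidestep it.

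The cleanest way to sidestep it is to argue directly on the fronts, which is all the lemma asks for. Set $T:=\inf\{t:r^{(1)}_t>r^{(2)}_t\}$. Both fronts are integer-valued and c\`adl\`ag, so if $T<\infty$ the infimum is attained; the left-limit inequality and monotonicity of $r^{(2)}$ force $r^{(1)}$ to jump upward at $T$ from $y:=r^{(2)}_T$ to $y+1$. The particle $u$ driving this jump is red in model~1 strictly before $T$, hence (by the hypothesis $r^{(1)}_s\le r^{(2)}_s$ for $s<T$) red in model~2 with $\tau<T$, and your own time-change computation then places $u$ at $y+1$ in model~2 at real time $\tau+(2/D_R)(T-\tau)<T$, giving $r^{(2)}_T\ge y+1$, a contradiction. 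This uses the same key inequality but avoids any induction over particles.
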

The above lemma, combined with Proposition~\ref{p:ballistique-dessous},
yields the key ballisticity estimate needed to reprove the estimates of
Section~\ref{s:estimates} for the remanent KS infection model.
The two additional results we need are the following: a version of the
strong Markov property restricted to $R_T$, and an upper bound on the
speed exactly similar to Proposition~\ref{p:ballistique-dessus-KS}.
Specifically, we have the following.
%pr15 #&#
%
\begin{prop}\label{p:strong-Markov-time-change}
The strong Markov property holds for our process: for all $w \in\S
_{\theta}$, all nonnegative $(\F^R_t)_{t \geq0}$-stopping time $T$,
and bounded measurable function $F$ on $\DD_+$,
one has that, on $\{ T < +\infty\}$,
%e69 #&#
%
\begin{equation}
\label{e:strong-Markov-time-change}\E_w \bigl( F \bigl( X(R_T) \bigr) \mid
\F^R_T \bigr) = \E_{X_T(R_T)} \bigl(F(X) \bigr)\qquad
\P_w\mbox{-a.s.},
\end{equation}
where we use the notation $X:=(X_{t})_{t \geq0}$.
\end{prop}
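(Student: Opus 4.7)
The plan is to carry over the argument used for Proposition \ref{p:strong-Markov}, adapting it to the time-changed setting. The key observation is that every particle $(\W,u) \in R_T$ became red at some time $\tau \leq T$ and, from $\tau$ onwards, follows a symmetric random walk with total jump rate $D_R$ (via the time-change). Conditionally on $\F^R_T$, the future of $X(R_T) := (X_{T+s}(R_T))_{s \geq 0}$ should therefore be distributed as that of a system of independent rate-$D_R$ random walks starting from the configuration $X_T(R_T)$. Correspondingly, on the right-hand side of \eqref{e:strong-Markov-time-change}, $\E_{X_T(R_T)}(F(X))$ is to be interpreted as the expectation under such a rate-$D_R$ system, whose construction and basic properties (well-posedness, Markov property, shift invariance, $\ldots$) are obtained by transcribing Section \ref{s:formal-construction} verbatim with $D_R$ in place of $2$.

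The argument will proceed in two steps. First, for a deterministic $t \geq 0$, I would establish the simple Markov analogue
\begin{equation}\label{e:simple-Markov-red}
\E_w\bigl(F(X(R_t)) \bigm| \F^R_t\bigr) = \E_{X_t(R_t)}(F(X)) \quad \P_w\text{-a.s.},
\end{equation}
using that $R_t$ and the paths of its particles up to time $t$ are $\F^R_t$-measurable, while, by the independence structure of the underlying rate-$2$ random walks and the time-change prescription, the post-$t$ increments of the trajectories in $R_t$ are independent rate-$D_R$ random walks, mutually independent and independent of $\F^R_t$. Second, I would approximate a general $(\F^R_t)$-stopping time $T$ by the dyadic stopping times $T_n := 2^{-n}\lceil 2^n T\rceil$ (with $T_n := +\infty$ on $\{T = +\infty\}$), each taking only countably many values, satisfying $T_n \downarrow T$, and such that $\F^R_T \subset \F^R_{T_n}$. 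Decomposing over the values of $T_n$ and applying \eqref{e:simple-Markov-red} at each of them yields
$$\E_w\bigl(F(X(R_{T_n})) \bigm| \F^R_{T_n}\bigr) = \E_{X_{T_n}(R_{T_n})}(F(X)) \quad \P_w\text{-a.s.},$$
and projecting onto $\F^R_T$ and letting $n \to \infty$ should produce the desired identity \eqref{e:strong-Markov-time-change}.

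The hard part will be the limit $n \to \infty$. Two ingredients are required: an analogue of Proposition \ref{p:unif-cont} for the rate-$D_R$ dynamics, yielding that $w \mapsto \E_w(F(X))$ is uniformly continuous on $(\S_\theta, d_\theta)$ whenever $F$ is a product of bounded uniformly continuous cylindrical functions (a monotone-class argument then extends the identity to general bounded measurable $F$); and the $\P_w$-almost sure convergence $X_{T_n}(R_{T_n}) \to X_T(R_T)$ in $(\S_\theta, d_\theta)$ on $\{T < +\infty\}$. For the latter, since $R_s$ grows only at the jump times of the front, which are almost surely locally finite in $s$, one has $R_{T_n} = R_T$ for all $n$ large enough; combined with the right-continuity of $s \mapsto X_s(R_T)$ at $s = T$, this gives the required convergence. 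Dominated convergence together with the càdlàg regularity of trajectories in $\DD_+$ then enables the limit to be taken on the left-hand side as well, closing the argument.
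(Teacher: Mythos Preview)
The paper does not give a proof of Proposition \ref{p:strong-Markov-time-change}; it is merely stated in Section \ref{s:extension} as one of the additional ingredients needed to carry over the estimates of Section \ref{s:estimates}. Your approach---reducing to a simple Markov property at deterministic times and then passing to general stopping times via dyadic approximation, exactly as in the proof of Proposition \ref{p:strong-Markov}---is the natural one and is presumably what the authors intend.

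Two points of your execution deserve comment. First, the assertion that ``$R_s$ grows only at the jump times of the front'' is not quite right in the remanent model: a blue particle also joins $R_s$ when its own trajectory $\W$ jumps down onto the current front level, without the front moving. Both kinds of transitions, however, occur at jump times of some walk and are almost surely locally finite in $s$, so your conclusion $R_{T_n}=R_T$ for all large $n$ remains valid. Second, the independence claim in Step~1 hides a genuine subtlety of the time-change. For $(\W,u)\in R_t$ that became red at time $\tau$, one has $W_{t+s}=\W_{v_0+(D_R/2)s}$ with $v_0=\tau+(D_R/2)(t-\tau)$; the cut-off $v_0$ is random and differs from particle to particle, and is not a stopping time for the natural filtration of $\W$ alone (it depends on the front, hence on the other walks). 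The clean way to make your claim rigorous is to apply the strong Markov property of each $\W$ with respect to the enlarged filtration obtained by adjoining the full trajectories of all the other walks; since the walks are mutually independent, $\W$ remains Markov in this filtration, and one checks that $v_0$ is a stopping time there. This is routine, but it is the one place where the argument for the time-changed model genuinely departs from the proof of Proposition \ref{p:strong-Markov}, and it is worth saying explicitly.
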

%
%pr16 #&#
%
\begin{prop}\label{p:ballistique-dessus-remanent-KS}
For the remanent KS infection model, there exist a constant $C^{\star
}_1(\rho)>0$ and a constant $c_{36}$, depending on $\rho$ and
$\mathscr{C}$, such that, for every $t > 0$,
\[
\P_{\nu_{\mathscr{C},+}} \bigl(r_t \geq C^{\star}_1(
\rho) t \bigr) \leq c_{36} \exp(-t).
\]
\end{prop}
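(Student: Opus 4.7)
The statement is the remanent-model counterpart of Proposition \ref{p:ballistique-dessus-KS}, and my plan is to follow the same route sketched in Appendix~B for the single-rate case: reduce to an exponential upper bound on the front speed coming from Theorem~1 of \cite{KesSid}, then adapt the initial condition from the KS one to $\nu_{\mathscr{C},+}$. The derivation breaks into three main steps.

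First, I would exploit the time-change construction of Section~\ref{s:extension}. Each particle's actual trajectory $(W_s)$ is a time-change of its underlying rate-$2$ walk $(\W_s)$, with $W_s = \W_{v(s)}$ for a non-decreasing function $v$ satisfying $v(s) \leq (D_R/2) s$ for all $s \geq 0$ (this follows directly from \eqref{e:time-change} and the definition $W_t := \W_v$ for $t > \tau$). Consequently, for any $t > 0$,
\[
\max_{s \in [0,t]} W_s \;\leq\; \max_{u \in [0,\, (D_R/2) t]} \W_u
\]
pointwise, which reduces position estimates for the remanent particles to corresponding estimates for independent rate-$2$ walks on $[0, (D_R/2) t]$, equivalently rate-$D_R$ walks on $[0,t]$.

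Second, I would invoke the upper-bound result of Kesten and Sidoravicius (Theorem~1 of \cite{KesSid}) for the infection model with rates $D_R \geq D_B$. This theorem provides an exponential tail bound on the front speed under the KS initial condition (a finite set of red particles added to an i.i.d.\ Poisson$(\rho)$ scatter of blue particles), and its proof controls the front's advance via maximal-displacement estimates for random walks at rate $D_R$ combined with density bounds for the initial Poisson configuration. Both ingredients remain valid in the remanent setting once the pathwise bound of the previous step is incorporated. This produces a constant $C_1^{\star}(\rho) > 0$ and an exponential tail bound of the form $C \exp(-t)$ on $\P(r_t \geq C_1^{\star}(\rho) t)$ under the KS initial condition.

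Finally, I would adapt the initial condition to $\nu_{\mathscr{C},+}$, following the scheme used in Appendix~B to derive Proposition~\ref{p:ballistique-dessus-KS} from its KS-initial-condition counterpart. Removing the particles at sites $x < 0$ can only decrease the front, by a monotonicity argument in the spirit of Lemma~\ref{l:monotonie}, while conditioning on $|N_0| \geq \mathscr{C}$ introduces a multiplicative factor of $1/\P_{\nu}(|N_0| \geq \mathscr{C})$ depending only on $\mathscr{C}$ and $\rho$. These two observations together deliver the desired estimate $\P_{\nu_{\mathscr{C},+}}(r_t \geq C_1^{\star}(\rho) t) \leq \Cr{cc55} \exp(-t)$.

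The main obstacle is the second step: verifying that the argument behind Theorem~1 of \cite{KesSid} extends to the remanent infection rule with distinct red and blue jump rates. Although the remanent rule creates additional red particles at sites previously occupied by red particles, these extra reds lie strictly below the front and do not contribute to its advance at the instant they are created. The KS argument controls the front's advance rate through the density of particles near the front combined with the rate~$D_R$, and this control remains robust with respect to the remanent modification precisely because of the pathwise time-change bound established in the first step.
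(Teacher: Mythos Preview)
The paper does not actually prove Proposition~\ref{p:ballistique-dessus-remanent-KS}: it is listed in Section~\ref{s:extension} as one of the ``two additional results we need'' and then used without further justification. Your proposal is therefore not to be compared against an existing proof but judged on its own merits as a way to fill this gap.

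Your route is the natural one, and it parallels exactly what the paper does in Appendix~B for Proposition~\ref{p:ballistique-dessus-KS}. Step~1 (the pathwise time-change bound $\max_{s\le t}W_s\le\max_{u\le (D_R/2)t}\W_u$) is correct and is precisely the mechanism the paper alludes to when it says that random-walk estimates ``are obtained by a simple comparison with a random walk with constant jump rate equal to $D_R$.'' Step~3 (passing from the KS initial condition to $\nu_{\mathscr{C},+}$) is also correct and mirrors the Appendix~B argument: one compares with the measure $\tilde\nu$ via monotonicity and absorbs the conditioning on $|N_0|\ge\mathscr{C}$ into the constant $\Cr{cc55}$.

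The only point that deserves a sharper statement is Step~2. You correctly note that Theorem~1 of \cite{KesSid} already covers the non-remanent model for arbitrary $D_R,D_B$; the question is whether remanence can accelerate the front beyond that bound. Your observation that the extra red particles created by remanence lie at or below the current front is the right one, but the cleanest way to close the argument is to note that the KS upper bound is really a bound on chains of particle displacements (each link being the displacement of one particle between the time it becomes red and the time it infects the next link). Since each such displacement is dominated, via your Step~1 bound, by the displacement of a rate-$D_R$ walk over the same real-time interval, the KS chain argument applies verbatim with rate $D_R$, regardless of the infection rule that determines when each link becomes red. With that refinement, the proposal is complete.
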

It is then possible to reprove all the estimates of Section~\ref
{s:estimates}, the only difference being that, at some places,
estimates for a random walk with constant jump rate 2 have to be
replaced by estimates
for a random walk whose jump rate may change from $D_B=2$ to $D_R>2$ at
some time-point. These estimates are obtained by a simple comparison
with a random walk with constant jump rate equal to $D_R$. One then
obtains Proposition~\ref{p:moments-renouvel}, leading to the proof of
the law of large numbers (Theorem~\ref{t:lgn-remanent}), and the
central limit theorem (Theorem~\ref{t:tcl-remanent}).

%\begin{appendix}
%\section{}
%\end{appendix}

% zodis "Acknowledgments" paliekamas pagal autoriu
%\section*{Acknowledgments}

%\begin{supplement}[id=suppA]
%\sname{Supplement A}
%\stitle{}
%\slink[doi]{10.1214/00-AOPXXXXSUPP} %[doi,text={...}] - jei reikia
%suskaldyti doi
%\sdatatype{.pdf}
%\sfilename{aopXXXX\_supp.pdf}
%\sdescription{}
%\end{supplement}

% imsref loaded by linak, 2015-09-04 11:06:08

\printaddresses

\begin{thebibliography}{19}
% pybtex-1.43. Style name=ims, version=2.92, label_style=nolabel, sorting_style=complex, cfg=None, language=None.
%b1 ###
\bibitem{AlvMacPop}
\begin{barticle}[mr]
\bauthor{\bsnm{Alves},~\bfnm{O.~S.~M.}\binits{O.~S.~M.}},
\bauthor{\bsnm{Machado},~\bfnm{F.~P.}\binits{F.~P.}} \AND
\bauthor{\bsnm{Popov},~\bfnm{S.~Yu.}\binits{S.~Yu.}}
(\byear{2002}).
\btitle{The shape theorem for the frog model}.
\bjournal{Ann. Appl. Probab.}
\bvolume{12}
\bpages{533--546}.
\bid{doi={10.1214/aoap/1026915614}, issn={1050-5164}, mr={1910638}}
\end{barticle}
%

\bptok{imsref}%
% NOT OUTPUTTED:
%   number = 2
%   doi = http://dx.doi.org/10.1214/aoap/1026915614
%   fjournal = The Annals of Applied Probability
\endbibitem

%b2 ###
\bibitem{AvedSaVol}
\begin{barticle}[mr]
\bauthor{\bsnm{Avena},~\bfnm{Luca}\binits{L.}},
\bauthor{\bsnm{dos Santos},~\bfnm{Renato~Soares}\binits{R.~S.}} \AND
\bauthor{\bsnm{V{\"o}llering},~\bfnm{Florian}\binits{F.}}
(\byear{2013}).
\btitle{Transient random walk in symmetric exclusion: Limit theorems and an {E}instein relation}.
\bjournal{ALEA Lat. Am. J. Probab. Math. Stat.}
\bvolume{10}
\bpages{693--709}.
\bid{issn={1980-0436}, mr={3108811}}
\end{barticle}
%

\bptok{imsref}%
% NOT OUTPUTTED:
%   number = 2
%   fjournal = ALEA. Latin American Journal of Probability and Mathematical Statistics
\endbibitem

%b3 ###
\bibitem{BerRam}
\begin{barticle}[mr]
\bauthor{\bsnm{B{\'e}rard},~\bfnm{Jean}\binits{J.}} \AND
\bauthor{\bsnm{Ram{\'{\i}}rez},~\bfnm{Alejandro~F.}\binits{A.~F.}}
(\byear{2010}).
\btitle{Large deviations of the front in a one-dimensional model of {$X+Y\to 2X$}}.
\bjournal{Ann. Probab.}
\bvolume{38}
\bpages{955--1018}.
\bid{doi={10.1214/09-AOP502}, issn={0091-1798}, mr={2674992}}
\end{barticle}
%

\bptok{imsref}%
% NOT OUTPUTTED:
%   number = 3
%   doi = http://dx.doi.org/10.1214/09-AOP502
%   coden = APBYAE
%   fjournal = The Annals of Probability
\endbibitem

%b4 ###
\bibitem{BerRamArxiv}
\begin{bmisc}[auto:parserefs-M02]
\bauthor{\bsnm{B\'erard},~\bfnm{Jean}\binits{J.}} \AND
\bauthor{\bsnm{Ram\'{\i}rez},~\bfnm{Alejandro~F.}\binits{A.~F.}}
(\byear{2012}).
\bhowpublished{Fluctuations of the front in a one-dimensional model for the spread of an infection.
Preprint. Available at \arxivurl{arXiv:1210.6781}.}
\end{bmisc}
%

\bptok{imsref}%
\endbibitem

%b5 ###
\bibitem{ComQuaRam}
\begin{barticle}[mr]
\bauthor{\bsnm{Comets},~\bfnm{Francis}\binits{F.}},
\bauthor{\bsnm{Quastel},~\bfnm{Jeremy}\binits{J.}} \AND
\bauthor{\bsnm{Ram{\'{\i}}rez},~\bfnm{Alejandro~F.}\binits{A.~F.}}
(\byear{2009}).
\btitle{Fluctuations of the front in a one dimensional model of {$X+Y\to2X$}}.
\bjournal{Trans. Amer. Math. Soc.}
\bvolume{361}
\bpages{6165--6189}.
\bid{doi={10.1090/S0002-9947-09-04889-2}, issn={0002-9947}, mr={2529928}}
\end{barticle}
%

\bptok{imsref}%
% NOT OUTPUTTED:
%   number = 11
%   doi = http://dx.doi.org/10.1090/S0002-9947-09-04889-2
%   coden = TAMTAM
%   fjournal = Transactions of the American Mathematical Society
\endbibitem

%b6 ###
\bibitem{DemZei}
\begin{bbook}[mr]
\bauthor{\bsnm{Dembo},~\bfnm{Amir}\binits{A.}} \AND
\bauthor{\bsnm{Zeitouni},~\bfnm{Ofer}\binits{O.}}
(\byear{2010}).
\btitle{Large Deviations Techniques and Applications}.
\bseries{Stochastic Modelling and Applied Probability}
\bvolume{38}.
\bpublisher{Springer},
\blocation{Berlin}.
\bid{doi={10.1007/978-3-642-03311-7}, mr={2571413}}
\end{bbook}
%

\bptok{imsref}%
% NOT OUTPUTTED:
%   doi = http://dx.doi.org/10.1007/978-3-642-03311-7
%   isbn = 978-3-642-03310-0
%   fpage = xvi+396
\endbibitem

%b7 ###
\bibitem{dHodSaSid}
\begin{barticle}[mr]
\bauthor{\bsnm{den Hollander},~\bfnm{F.}\binits{F.}},
\bauthor{\bsnm{dos Santos},~\bfnm{R.}\binits{R.}} \AND
\bauthor{\bsnm{Sidoravicius},~\bfnm{V.}\binits{V.}}
(\byear{2013}).
\btitle{Law of large numbers for non-elliptic random walks in dynamic random environments}.
\bjournal{Stochastic Process. Appl.}
\bvolume{123}
\bpages{156--190}.
\bid{doi={10.1016/j.spa.2012.09.002}, issn={0304-4149}, mr={2988114}}
\end{barticle}
%

\bptok{imsref}%
% NOT OUTPUTTED:
%   number = 1
%   doi = http://dx.doi.org/10.1016/j.spa.2012.09.002
%   coden = STOPB7
%   fjournal = Stochastic Processes and their Applications
\endbibitem

%b8 ###
\bibitem{Fel}
\begin{bbook}[mr]
\bauthor{\bsnm{Feller},~\bfnm{William}\binits{W.}}
(\byear{1966}).
\btitle{An Introduction to Probability Theory and Its Applications. {V}ol. {II}}.
\bpublisher{Wiley},
\blocation{New York}.
\bid{mr={0210154}}
\end{bbook}
%

\bptok{imsref}%
% NOT OUTPUTTED:
%   fpage = xviii+636
\endbibitem

%b9 ###
\bibitem{HildHolSidSoaTei}
\begin{bmisc}[auto:parserefs-M02]
\bauthor{\bsnm{Hil\'ario},~\bfnm{M.}\binits{M.}},
\bauthor{\bsnm{den Hollander},~\bfnm{F.}\binits{F.}},
\bauthor{\bsnm{Sidoravicius},~\bfnm{V.}\binits{V.}},
\bauthor{\bsnm{Soares~dos Santos},~\bfnm{R.}\binits{R.}} \AND
\bauthor{\bsnm{Teixeira},~\bfnm{A.}\binits{A.}}
(\byear{2014}).
\bhowpublished{Random {W}alk on {R}andom {W}alks.
Preprint. Available at \arxivurl{arXiv:1401.4498}.}
\end{bmisc}
%

\bptok{imsref}%
\endbibitem

%b10 ###
\bibitem{KesSid}
\begin{barticle}[mr]
\bauthor{\bsnm{Kesten},~\bfnm{Harry}\binits{H.}} \AND
\bauthor{\bsnm{Sidoravicius},~\bfnm{Vladas}\binits{V.}}
(\byear{2005}).
\btitle{The spread of a rumor or infection in a moving population}.
\bjournal{Ann. Probab.}
\bvolume{33}
\bpages{2402--2462}.
\bid{doi={10.1214/009117905000000413}, issn={0091-1798}, mr={2184100}}
\end{barticle}
%

\bptok{imsref}%
% NOT OUTPUTTED:
%   number = 6
%   doi = http://dx.doi.org/10.1214/009117905000000413
%   coden = APBYAE
%   fjournal = The Annals of Probability
\endbibitem

%b11 ###
\bibitem{KesSid3}
\begin{barticle}[mr]
\bauthor{\bsnm{Kesten},~\bfnm{Harry}\binits{H.}} \AND
\bauthor{\bsnm{Sidoravicius},~\bfnm{Vladas}\binits{V.}}
(\byear{2008}).
\btitle{A problem in one-dimensional diffusion-limited aggregation (DLA) and positive recurrence of {M}arkov chains}.
\bjournal{Ann. Probab.}
\bvolume{36}
\bpages{1838--1879}.
\bid{doi={10.1214/07-AOP379}, issn={0091-1798}, mr={2440925}}
\end{barticle}
%

\bptok{imsref}%
% NOT OUTPUTTED:
%   number = 5
%   doi = http://dx.doi.org/10.1214/07-AOP379
%   coden = APBYAE
%   fjournal = The Annals of Probability
\endbibitem

%b12 ###
\bibitem{KesSid2}
\begin{barticle}[mr]
\bauthor{\bsnm{Kesten},~\bfnm{Harry}\binits{H.}} \AND
\bauthor{\bsnm{Sidoravicius},~\bfnm{Vladas}\binits{V.}}
(\byear{2008}).
\btitle{A shape theorem for the spread of an infection}.
\bjournal{Ann. of Math. (2)}
\bvolume{167}
\bpages{701--766}.
\bid{doi={10.4007/annals.2008.167.701}, issn={0003-486X}, mr={2415386}}
\end{barticle}
%

\bptok{imsref}%
% NOT OUTPUTTED:
%   number = 3
%   doi = http://dx.doi.org/10.4007/annals.2008.167.701
%   coden = ANMAAH
%   fjournal = Annals of Mathematics. Second Series
\endbibitem

%b13 ###
\bibitem{KumTrip}
\begin{barticle}[auto:parserefs-M02]
\bauthor{\bsnm{Kumar},~\bfnm{Niraj}\binits{N.}} \AND
\bauthor{\bsnm{Tripathy},~\bfnm{Goutam}\binits{G.}}
(\byear{2010}).
\btitle{Velocity of front propagation in the epidemic model {$A+B \rightarrow 2A$}}.
\bjournal{Eur. Phys. J. B}
\bvolume{78}
\bpages{201--205}.
\end{barticle}
%

\bptok{imsref}%
\endbibitem

%b14 ###
\bibitem{MaiSokKuzBlu}
\begin{barticle}[auto:parserefs-M02]
\bauthor{\bsnm{Mai},~\bfnm{D.}\binits{D.}},
\bauthor{\bsnm{Sokolov},~\bfnm{I.~M.}\binits{I.~M.}},
\bauthor{\bsnm{Kuzovkov},~\bfnm{V.~N.}\binits{V.~N.}} \AND
\bauthor{\bsnm{Blumen},~\bfnm{A.}\binits{A.}}
(\byear{1997}).
\btitle{Front form and velocity in a one-dimensional autocatalytic {$A+B \to 2A$} reaction}.
\bjournal{Phys. Rev. E}
\bvolume{56}
\bpages{4130--4134}.
\end{barticle}
%

\bptok{imsref}%
% NOT OUTPUTTED:
%   number = 4
\endbibitem

%b15 ###
\bibitem{MaiSokBlu}
\begin{barticle}[auto:parserefs-M02]
\bauthor{\bsnm{Mai},~\bfnm{J.}\binits{J.}},
\bauthor{\bsnm{Sokolov},~\bfnm{I.~M.}\binits{I.~M.}} \AND
\bauthor{\bsnm{Blumen},~\bfnm{A.}\binits{A.}}
(\byear{1996}).
\btitle{Front propagation and local ordering in one-dimensional irreversible autocatalytic reactions}.
\bjournal{Phys. Rev. Lett.}
\bvolume{77}
\bpages{4462--4465}.
\end{barticle}
%

\bptok{imsref}%
% NOT OUTPUTTED:
%   month = Nov
\endbibitem

%b16 ###
\bibitem{MaiSokBlu2}
\begin{barticle}[auto:parserefs-M02]
\bauthor{\bsnm{Mai},~\bfnm{J.}\binits{J.}},
\bauthor{\bsnm{Sokolov},~\bfnm{I.~M.}\binits{I.~M.}} \AND
\bauthor{\bsnm{Blumen},~\bfnm{A.}\binits{A.}}
(\byear{2000}).
\btitle{Front propagation in one-dimensional autocatalytic reactions: The breakdown of the classical picture at small particle concentrations}.
\bjournal{Phys. Rev. E}
\bvolume{62}
\bpages{141--145}.
\end{barticle}
%

\bptok{imsref}%
% NOT OUTPUTTED:
%   month = Jul
\endbibitem

%b17 ###
\bibitem{McD}
\begin{bincollection}[mr]
\bauthor{\bsnm{McDiarmid},~\bfnm{Colin}\binits{C.}}
(\byear{1998}).
\btitle{Concentration}.
In \bbooktitle{Probabilistic Methods for Algorithmic Discrete Mathematics}.
\bseries{Algorithms Combin.}
\bvolume{16}
\bpages{195--248}.
\bpublisher{Springer},
\blocation{Berlin}.
\bid{doi={10.1007/978-3-662-12788-9_6}, mr={1678578}}
\end{bincollection}
%

\bptok{imsref}%
% NOT OUTPUTTED:
%   doi = http://dx.doi.org/10.1007/978-3-662-12788-9_6
\endbibitem

%b18 ###
\bibitem{Pan}
\begin{barticle}[auto:parserefs-M02]
\bauthor{\bsnm{Panja},~\bfnm{D.}\binits{D.}}
(\byear{2004}).
\btitle{Effects of fluctuations on propagating fronts}.
\bjournal{Phys. Rep.}
\bvolume{393}
\bpages{87--174}.
\end{barticle}
%

\bptok{imsref}%
\endbibitem

%b19 ###
\bibitem{RamSid}
\begin{barticle}[mr]
\bauthor{\bsnm{Ram{\'{\i}}rez},~\bfnm{A.~F.}\binits{A.~F.}} \AND
\bauthor{\bsnm{Sidoravicius},~\bfnm{V.}\binits{V.}}
(\byear{2004}).
\btitle{Asymptotic behavior of a stochastic combustion growth process}.
\bjournal{J. Eur. Math. Soc. (JEMS)}
\bvolume{6}
\bpages{293--334}.
\bid{issn={1435-9855}, mr={2060478}}
\end{barticle}
%

\bptok{imsref}%
% NOT OUTPUTTED:
%   url = http://link.springer.de/cgi/linkref?issn=1435-9855&year=04&volume=6&page=293
%   number = 3
%   fjournal = Journal of the European Mathematical Society (JEMS)
\endbibitem
\end{thebibliography}
\end{document}